\theoremstyle{plain}
\newtheorem{theorem}{Theorem}[section]
\newtheorem*{theorem*}{Theorem}
\newtheorem{proposition}[theorem]{Proposition}
\newtheorem{corollary}[theorem]{Corollary}
\newtheorem{lemma}[theorem]{Lemma}
\theoremstyle{definition}
\newtheorem{definition}[theorem]{Definition}
\newtheorem{remark}[theorem]{Remark}
\newtheorem{example}[theorem]{Example}
\newcommand{\enm}[1]{\ensuremath{#1}}          %
\newcommand{\op}[1]{\operatorname{#1}}
\newcommand{\cal}[1]{\mathcal{#1}}
\newcommand{\BB}{\enm{\mathbb{B}}}
\newcommand{\CC}{\enm{\mathbb{C}}}
\newcommand{\RR}{\enm{\mathbb{R}}}
\newcommand{\ZZ}{\enm{\mathbb{Z}}}
\newcommand{\PP}{\enm{\mathbb{P}}}
\newcommand{\Aa}{\enm{\cal{A}}}
\newcommand{\Bb}{\enm{\cal{B}}}
\newcommand{\Dd}{\enm{\cal{D}}}
\newcommand{\Ee}{\enm{\cal{E}}}
\newcommand{\Ff}{\enm{\cal{F}}}
\newcommand{\Gg}{\enm{\cal{G}}}
\newcommand{\Hh}{\enm{\cal{H}}}
\newcommand{\Ii}{\enm{\cal{I}}}
\newcommand{\Ll}{\enm{\cal{L}}}
\newcommand{\Mm}{\enm{\cal{M}}}
\newcommand{\Oo}{\enm{\cal{O}}}
\newcommand{\Rr}{\enm{\cal{R}}}
\newcommand{\Ss}{\enm{\cal{S}}}
\renewcommand{\phi}{\varphi}
\renewcommand{\theta}{\vartheta}
\renewcommand{\epsilon}{\varepsilon}
\newcommand{\Pic}{\op{Pic}}
\newcommand{\Ext}{\op{Ext}}
\newcommand{\rk}{\op{rank}}
\renewcommand{\to}[1][]{\xrightarrow{\ #1\ }}
\renewcommand{\a}{\alpha}
\newcommand{\old}[1]{}
\begin{document}

%\layout
\title[Logarithmic co-Higgs bundles]{Logarithmic co-Higgs bundles}
\author{Edoardo Ballico and Sukmoon Huh}
\address{Universit\`a di Trento, 38123 Povo (TN), Italy}
\email{edoardo.ballico@unitn.it}
\address{Sungkyunkwan University, 300 Cheoncheon-dong, Suwon 440-746, Korea}
\email{sukmoonh@skku.edu}
%\address{Politecnico di Torino, Corso Duca degli Abruzzi 24, 10129 Torino, Italy}
%\email{francesco.malaspina@polito.it}
\keywords{co-Higgs bundle, double covering, nilpotent}
\thanks{The first author is partially supported by MIUR and GNSAGA of INDAM (Italy). The second author is supported by Basic Science Research Program 2015-037157 through NRF funded by MEST and the National Research Foundation of Korea(KRF) 2016R1A5A1008055 grant funded by the Korea government(MSIP)}

\subjclass[2010]{Primary: {14J60}; Secondary: {14D20, 53D18}}

\begin{abstract}
In this article we introduce a notion of logarithmic co-Higgs sheaves associated to a simple normal crossing divisor on a projective manifold, and show their existence with nilpotent co-Higgs fields for fixed ranks and second Chern classes. Then we deal with various moduli problems with logarithmic co-Higgs sheaves involved, such as coherent systems and holomorphic triples, specially over algebraic curves of low genus.
\end{abstract}

\maketitle
%\tableofcontents

\section{Introduction}

A co-Higgs sheaf on a complex manifold $X$ is a torsion-free coherent sheaf $\Ee$ on $X$ together with an endomorphism $\Phi$ of $\Ee$, called a {\it co-Higgs field}, taking values in the tangent bundle $T_X$ of $X$, i.e. $\Phi \in H^0(\mathcal{E}nd(\Ee)\otimes T_X)$, such that the integrability condition $\Phi \wedge \Phi=0$ is satisfied. When $\Ee$ is locally free, it is a generalized vector bundle on $X$, considered as a generalized complex manifold and it is introduced and developed by Hitchin and Gualtieri in \cite{Hi, Gual}. A naturally defined stability condition on co-Higgs sheaves allows one to study their moduli spaces and Rayan and Colmenares investigate their geometry over projective spaces and a smooth quadric surface in \cite{R2, Rayan} and \cite{VC1}. Indeed it is expected that the existence of stable co-Higgs bundles forces the position of $X$ to be located at the lower end of the Kodaira spectrum, and Corr\^{e}a shows in \cite{Correa} that a K\"ahler compact surface with a nilpotent stable co-Higgs bundle of rank two is uniruled up to finite \'etale cover. In \cite{BH1, BH} the authors suggest a simple way of constructing nilpotent co-Higgs sheaves, based on Hartshorne-Serre correspondence, and obtain some (non-)existence results.

In this article we investigate the existence of nilpotent co-Higgs sheaves with a co-Higgs field vanishing in the normal direction to a given divisor of $X$; for a given arrangement $\Dd$ of smooth irreducible divisors of $X$ with simple normal crossings, the sheaf $T_X(-\log \Dd)$ of logarithmic vector fields along $\Dd$ is locally free and we consider a pair $(\Ee, \Phi)$ of a torsion-free coherent sheaf $\Ee$ and a morphism $\Phi : \Ee \rightarrow \Ee \otimes T_X(-\log \Dd)$ with the integrability condition satisfied. The pair is called a {\it $\Dd$-logarithmic co-Higgs sheaf} and it is called $2$-nilpotent if $\Phi \circ \Phi$ is trivial. Our first result is on the existence of nilpotent $\Dd$-logarithmic co-Higgs sheaves of rank at least two. 

\begin{theorem}[Propositions \ref{aa1}, \ref{aa2} and \ref{aa2.00}]\label{thm33}
Let $X$ be a projective manifold with $\dim (X)\ge 2$ and $\Dd\subset X$ be a simple normal crossing divisor. For fixed $\Ll\in \Pic (X)$ and an integer $r\ge 2$, there exists a $2$-nilpotent $\Dd$-logarithmic co-Higgs sheaf $(\Ee, \Phi)$, where $\Phi\ne 0$ and $\Ee$ is reflexive and indecompodable with $c_1(\Ee)\cong \Ll$ and $\rk \Ee = r$. 
\end{theorem}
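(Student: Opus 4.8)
The plan is to build $(\Ee,\Phi)$ explicitly so that $\Phi$ factors through a rank-one subsheaf, which automatically makes $\Phi\circ\Phi=0$, and then to tune the extension data to obtain reflexivity, indecomposability, and the prescribed $c_1$ and rank. First I would treat the rank-two case as the model. Pick any line bundle $\Nn$ on $X$ with $H^0(\Nn\otimes T_X(-\log\Dd))\ne 0$; such an $\Nn$ exists since we may twist by a sufficiently negative multiple of an ample class so that $\Nn^\vee$ admits enough sections mapping into the (possibly non-globally-generated) bundle $T_X(-\log\Dd)$, or more simply we may choose $\Nn=\Oo_X(-H)$ for $H$ very ample and use a nonzero section of $T_X(-\log\Dd)(H)$. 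Fix a nonzero $s\in H^0(\Nn\otimes T_X(-\log\Dd))$, viewed as a map $s\colon\Nn^\vee\to T_X(-\log\Dd)$. Now set $\Ee=\Nn^\vee\oplus\Mm$ where $\Mm$ is a line bundle chosen so that $c_1(\Ee)=\Nn^\vee\otimes\Mm\cong\Ll$, i.e. $\Mm=\Ll\otimes\Nn$, and define $\Phi$ to be the composite
\[
\Ee=\Nn^\vee\oplus\Mm\twoheadrightarrow\Nn^\vee\xrightarrow{\ s\ }T_X(-\log\Dd)\hookrightarrow\Ee\otimes T_X(-\log\Dd),
\]
where the last inclusion is the inclusion of the $\Nn^\vee$-summand tensored with $T_X(-\log\Dd)$ — wait, rather one uses the summand $\Mm\hookrightarrow\Ee$ in the target so that $\Phi$ maps $\Nn^\vee$ into $\Mm\otimes T_X(-\log\Dd)$; then the image of $\Phi$ lies in $\Mm\otimes T_X(-\log\Dd)$, on which $\Phi$ vanishes, so $\Phi\circ\Phi=0$ and a fortiori $\Phi\wedge\Phi=0$. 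This already gives a nonzero $2$-nilpotent $\Dd$-logarithmic co-Higgs sheaf on the decomposable bundle.

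The next step is to promote $\Ee$ from decomposable to indecomposable without destroying $\Phi\circ\Phi=0$. The natural device is to replace the trivial extension $\Nn^\vee\oplus\Mm$ by a nontrivial extension $0\to\Mm\to\Ee\to\Nn^\vee\to 0$ classified by a nonzero class in $\Ext^1(\Nn^\vee,\Mm)=H^1(X,\Mm\otimes\Nn)$; after twisting $\Mm$ and $\Nn$ appropriately by a large ample class we may assume this $H^1$ is nonzero (Serre duality plus Kodaira–Nakano vanishing arranged in the opposite direction, or simply an explicit curve/surface computation and then pull back). For such an $\Ee$ one still has the surjection $\Ee\onto\Nn^\vee$ and the inclusion $\Mm\hookrightarrow\Ee$, so the same composite $\Phi\colon\Ee\onto\Nn^\vee\xrightarrow{s}\Mm\otimes T_X(-\log\Dd)\hookrightarrow\Ee\otimes T_X(-\log\Dd)$ makes sense and is still $2$-nilpotent; and $\Ee$ is indecomposable precisely because the extension class is nonzero (any splitting as line bundles would split the sequence). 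Since $\Ee$ sits in an extension of locally free sheaves it is itself locally free, hence reflexive, and $c_1(\Ee)=c_1(\Mm)+c_1(\Nn^\vee)=\Ll$ by construction, with $\rk\Ee=2$.

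For general rank $r\ge 3$ I would simply take $\Ee=\Ee_2\oplus\Oo_X^{\oplus(r-2)}$ — no, that is decomposable; instead iterate: form $\Ee$ as an iterated nontrivial extension of $r$ line bundles $\Mm_1,\dots,\Mm_r$ with $\bigotimes\Mm_i\cong\Ll$, arranging each successive extension class to be nonzero so the total sheaf is indecomposable (a standard filtration/\,$\Ext$ argument: at each stage the relevant $\Ext^1$ is a higher cohomology group that can be made nonzero after twisting), and let $\Phi$ be the composite through a single rank-one piece as above, so $\Phi\circ\Phi=0$ with $\Phi\ne 0$, $\Ee$ locally free of rank $r$ with $c_1(\Ee)=\Ll$. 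The integrability condition $\Phi\wedge\Phi=0$ is implied by $\Phi\circ\Phi=0$ because $\Phi$ factors through the rank-one subsheaf $\Mm_1\otimes T_X(-\log\Dd)$ on which it vanishes. The main obstacle — and the point where I expect the real work to lie, matching the paper's split into three propositions — is the simultaneous arrangement of all the cohomological nonvanishings (existence of the section $s$ and nonvanishing of each extension $\Ext^1$) together with the prescribed $c_1=\Ll$ on an arbitrary projective manifold of dimension $\ge 2$; this is presumably handled by twisting the $\Mm_i$ by suitable multiples of an ample divisor and invoking asymptotic (non)vanishing, and the case analysis on $\dim X$, on whether $T_X(-\log\Dd)$ has sections, and on small $r$ is what accounts for Propositions \ref{aa1}, \ref{aa2}, and \ref{aa2.00} being separate.
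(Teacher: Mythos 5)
Your construction of the co-Higgs field is sound and matches the paper's basic mechanism: $\Phi$ is a composite $\Ee \onto Q \to K\otimes T_X(-\log \Dd) \hookrightarrow \Ee\otimes T_X(-\log \Dd)$ through the quotient $Q$ and back into the subsheaf $K$ of an extension, which forces $\Phi\circ\Phi=0$ and hence integrability. The gap is in the indecomposability step, and it is fatal rather than cosmetic. First, your stated criterion --- ``$\Ee$ is indecomposable precisely because the extension class is nonzero (any splitting as line bundles would split the sequence)'' --- is false: a direct summand need not be compatible with the filtration, and the twisted Euler sequence $0\to \Oo_{\PP^1}(-2)\to \Oo_{\PP^1}(-1)^{\oplus 2}\to \Oo_{\PP^1}\to 0$ is a non-split extension of line bundles with decomposable middle term. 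Second, and more seriously, on $X=\PP^n$ (a case the theorem must cover) every line bundle has vanishing intermediate cohomology, so $\Ext^1(\Nn^\vee,\Mm)=H^1(\Mm\otimes \Nn)=0$ for \emph{all} choices of line bundles; every iterated extension of line bundles on $\PP^n$ is a direct sum of line bundles, and your construction can never produce an indecomposable $\Ee$ there. Your proposed fix --- twisting by a large ample class to make the relevant $H^1$ nonzero --- goes the wrong way: you need $\Nn$ positive enough that $H^0(\Nn\otimes T_X(-\log\Dd))\ne 0$, and then Serre vanishing kills $H^1(\Ll\otimes\Nn^{\otimes 2})$ rather than making it nonzero.

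This is precisely why the paper does not use extensions of line bundles by line bundles. In all three propositions the quotient is a twisted ideal sheaf $\Ii_Z\otimes\Aa$ of a codimension-two subscheme $Z$ (general points when $\dim X=2$, a complete intersection $Y$ when $\dim X\ge 3$): the failure of local freeness of $\Ii_Z$ makes $\ext^1_X(\Ii_Z\otimes\Aa,\Oo_X)$ large, which is what lets one both realize the extension as a locally free (or reflexive) sheaf via Cayley--Bacharach, resp. the Hartshorne--Serre correspondence, and choose the extension so that no line-bundle summand splits off; indecomposability is then proved by identifying the subsheaf $(\Ll\otimes\Rr)^{\oplus(r-1)}$ (resp. $\Oo_X^{\oplus(r-1)}$) intrinsically as the image of the evaluation map on global sections and arguing on the factors. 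To repair your argument you would have to replace the rank-one quotient $\Nn^\vee$ by such an $\Ii_Z\otimes\Aa$ and supply both of these missing ingredients.
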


Indeed, we can strengthen the statement of Theorem \ref{thm33} by requiring $\Ee$ to be locally free, in cases $\dim (X)=2$ or $r\ge \dim (X)$, due to the statement of the Hartshorne-Serre correspondence and the dimension of non-locally free locus (see Propositions \ref{aa1} and \ref{aa2}). Moreover, in case $\dim (X)=2$, we suggest an explicit number such that a logarithmic co-Higgs bundle exists for each second Chern class at least that number. We notice that the logarithmic co-Higgs sheaves constructed in Theorem \ref{thm33} are hightly unstable, which is consistent with the general philosophy on the existennce of stable co-Higgs bundles (see \cite[Theorem 1.1]{Correa} for example). 

Then we pay our attention to various different types of semistable objects involving logarithmic co-Higgs sheaves. In Section \ref{exp} we produce several examples of nilpotent semistable logarithmic co-Higgs sheaves on projective spaces and a smooth quadric surface, using a simple way of constructin in \cite{BH}. Since the logarithmic co-Higgs sheaves are co-Higgs sheaves in the usual sense with an additional vanishing condition in the normal direction of divisors, so their moduli space is a closed subvariety of the moduli of the usual co-Higgs sheaves. In Section \ref{mex} we describe two moduli spaces of logarithmic co-Higgs bundles of rank two on $\PP^2$ in two cases. 

Then in Section \ref{osem} we experiment with extensions of the notion of stability for co-Higgs sheaves and logarithmic co-Higgs sheaves. A key point for the study of moduli spaces was the introduction of parameters for the conditions of stability. We extend two of them, coherent systems and holomorphic triples, to co-Higgs sheaves. Specially in case of holomorphic triples, we show that any holomorphic triple admits the Harder-Narasimhan filtration in Corollary \ref{mcor} and construct the moduli space of $\nu_\alpha$-stable $\Dd$-logarithmic co-Higgs triples, using Simpson's idea and quiver interpretation. We always work in cases in which there are non-trivial co-Higgs fields; so in case of dimension one we only consider projective lines and elliptic curves. We call $\nu _\alpha$-stability with $\alpha \in \RR _{>0}$, the notion of stability for holomorphic triples. In some cases we prove that the only $\nu _\alpha$-stable holomorphic triples are obtained in a standard way from the same holomorphic triple taking the zero co-Higgs field (see Remark \ref{tre}).

It is certain that a logarithmic co-Higgs field is different from a map $\Ee \rightarrow \Ee \otimes T_X(-D)$, unless $X$ is a curve. We have a glimpse of this map in Section \ref{divisor} for the cases $X=\PP^2$ or $\PP^1 \times \PP^1$. On the contrary, in Section \ref{exttt} we consider a map $\Ee \rightarrow \Ee \otimes T_X(kD)$ with $k>0$, called a meromorphic co-Higgs field, and describe semistable meromorphic co-Higgs bundles on $\PP^1$. 

The second author would like to thank U.~Bruzzo, N.~Nitsure and L.~Brambila-Paz for many suggestions and interesting discussion.

%%%%%%%%%%%%%%%%%%%%%%%%%%%

\section{Definitions and Examples}\label{exp}

Let $X$ be a smooth complex projective variety of dimension $n\ge2$ with the tangent bundle $T_X$. For a fixed ample line bundle $\Oo_X(1)$ and a coherent sheaf $\Ee$ on $X$, we denote $\Ee \otimes \Oo_X(t)$ by $\Ee(t)$ for $t\in \ZZ$. The dimension of cohomology group $H^i(X, \Ee)$ is denoted by $h^i(X,\Ee)$ and we will skip $X$ in the notation, if there is no confusion. For two coherent sheaves $\Ee$ and $\Ff$ on $X$, the dimension of $\Ext_X^1(\Ee, \Ff)$ is denoted by $\mathrm{ext}_X^1(\Ee, \Ff)$. 

To an {\it arrangement} $\Dd=\{D_1, \ldots, D_m\}$ of smooth irreducible divisors $D_i$'s on $X$ such that $D_i\ne D_j$ for $i\ne j$, we can associate the sheaf $T_X(-\log \Dd)$ of logarithmic vector fields along $\Dd$, i.e. it is the subsheaf of the tangent bundle $T_X$ whose section consists of vector fields tangent to $\Dd$. We always assume that $\Dd$ has simple normal crossings and so $T_X(-\log \Dd)$ is locally free. It also fits into the exact sequence \cite{D}

\begin{equation}\label{log1}
0\to T_X(-\log \Dd) \to T_X \to \oplus_{i=1}^m {\epsilon_i}_*\Oo_{D_i}(D_i) \to 0,
\end{equation}
where $\epsilon_i: D_i \rightarrow X$ is the embedding. 

\begin{definition}
A {\it $\Dd$-logarithmic co-Higgs} bundle on $X$ is a pair $(\Ee, \Phi)$ where $\Ee$ is a holomorphic vector bundle on $X$ and $\Phi: \Ee \rightarrow \Ee \otimes T_X(-\log \Dd)$ with $\Phi \wedge \Phi=0$. Here $\Phi$ is called the {\it logarithmic co-Higgs field} of $(\Ee, \Phi)$ and the condition $\Phi \wedge \Phi=0$ is called the {\it integrability}. 
\end{definition}

We say that the co-Higgs field $\Phi$ is \emph{$2$-nilpotent} if $\Phi$ is non-trivial and $\Phi \circ \Phi =0$. Note that any $2$-nilpotent map $\Phi : \Ee \rightarrow \Ee \otimes T_X(-\log \Dd)$ satisfies $\Phi \wedge \Phi =0$ and so it is a non-zero co-Higgs structure on $\Ee$, i.e. a nilpotent co-Higgs structure.

Note that if $\Dd$ is empty, then we get a usual notion of co-Higgs bundle. Indeed for each $\Dd$-logarithmic co-Higgs bundle we may consider a usual co-Higgs bundle by compositing the injection in (\ref{log1}):
$$\Ee \to\Ee\otimes T_X(-\log \Dd) \to \Ee \otimes T_X.$$
Conversely, for a usual co-Higgs bundle $(\Ee, \Phi)$ we may composite the surjection in (\ref{log1}) to have a map $\Ee \rightarrow \oplus_{i=1}^m \Ee\otimes \Oo_D(D_i)$, whose vanishing would produce a logarithmic co-Higgs structure $\Ee \rightarrow \Ee \otimes T_X(-\log \Dd)$. Thus our notion of logarithmic co-Higgs bundle capture the notion of a co-Higgs field $\Phi :\Ee \rightarrow \Ee \otimes T_X$ vanishing in the normal direction to the divisors in the support of $\Dd$; in general it would not be asking for a map $\phi : \Ee \rightarrow \Ee \otimes T_X(-D)$ when $\Dd=\{D\}$. If $\dim (X)=1$, then we have $T_X(-\log \Dd ) \cong T_X(-D)$. In Section \ref{divisor} we consider a few cases in which we take $T_X(-D)$ with $D$ smooth, instead of $T_X(-\log \Dd)$.

\begin{definition}\label{ss1}
For a fixed ample line bundle $\Hh$ on $X$, a $\Dd$-logarithmic co-Higgs bundle $(\Ee, \Phi)$ is {\it $\Hh$-semistable} (resp. {\it $\Hh$-stable}) if
$$\mu(\Ff) \le (\text{resp.}<)~ \mu(\Ee)$$
for every coherent subsheaf $0\subsetneq \Ff \subsetneq \Ee$ with $\Phi(\Ff) \subset \Ff \otimes T_X(-\log \Dd)$. Recall that the slope $\mu(\Ee)$ of a torsion-free sheaf $\Ee$ on $X$ is defined to be $\mu(\Ee):=\deg (\Ee)/\rk \Ee$, where $\deg (\Ee)=c_1(\Ee)\cdot \Hh^{n-1}$. In case $\Hh\cong \Oo_X(1)$ we simply call it semistable (resp. stable) without specifying $\Hh$. 
\end{definition}

\begin{remark}
Let $(\Ee, \Phi)$ be a semistable $\Dd$-logarithmic co-Higgs bundle. For a subsheaf $\Ff \subset \Ee$ with $\Phi (\Ff)\subseteq \Ff\otimes T_X$, we have
$$\Ff \otimes T_X(-\log \Dd) =\left ( \Ff\otimes T_X\right) \cap \left ( \Ee\otimes T_X(-\log \Dd)\right )$$
and $\mathrm{Im} (\Phi)\subseteq \Ee\otimes T_X(-\log \Dd)$. Thus we get $\Phi (\Ff)\subseteq \Ff \otimes T_X(-\log \Dd)$ and so $(\Ee, \Phi)$ is semistable as a usual co-Higgs bundle. 
\end{remark}

Let us denote by $\mathbf{M}_{\Dd, X}(\chi(t))$ the moduli space of semistable $\Dd$-logarithmic co-Higgs bundles with Hilbert polynomial $\chi(t)$. It exists as a closed subscheme of $\mathbf{M}_X(\chi(t))$ the moduli space of semistable co-Higgs bundles with the same Hilbert polynomial, since the vanishing of co-Higgs fields in the normal direction to $\Dd$ is a closed condition. We also denote by $\mathbf{M}^{\circ}_{\Dd, X}(\chi(t))$ the subscheme consisting of stable ones. 

\begin{example}\label{bbb+1}
Let $X=\PP^1$ and $\Dd=\{p_1, \ldots, p_m\}$ be a set of $m$ distinct points on $X$. Then we have $T_{\PP^1}(-\log \Dd)\cong \Oo_{\PP^1}(2-m)$. Let $\Ee \cong \oplus_{i=1}^r\Oo_{\PP^1}(a_i)$ be a vector bundle of rank $r \ge 2$ on $\PP^1$ with $a_1\ge \cdots \ge a_r$ and $(\Ee, \Phi)$ be a semistable $\Dd$-logarithmic co-Higgs bundle, i.e. $\Phi: \Ee \rightarrow \Ee(2-m)$. If $a_1=\cdots =a_r$, then the pair $(\Ee, \Phi )$ is semistable for any $\Phi$. If $m\ge 3$, then $\Oo_{\PP^1}(a_1)$ would contradict the semistability of $(\Ee, \Phi)$, unless $a_1=\cdots = a_r$. If $a_1=\cdots =a_r$ and $m\ge 3$, then we have $\Phi =0$ and so $(\Ee ,\Phi)$ is strictly semistable. Assume now that $m\in \{0,1,2\}$ and then the corresponding moduli space $\mathbf{M}_{\Dd, \PP^1}(rt+d)$ is projective and $\mathbf{M}^{\circ}_{\Dd, \PP^1}(rt+d)$ is smooth with dimension $(2-m)r^2+1$, where $d=r+\sum_{i=1}^m a_i$ by \cite{Nitsure}. The case $m=0$ is dealt in \cite[Theorem 6.1]{R2}. Now assume $m=1$. Adapting the proof of \cite[Theorem 6.1]{R2}, we get Proposition \ref{bbb+2} which says in the case $\ell =-1$ that the existence of a map $\Phi$ with $(\Ee ,\Phi )$ semistable implies that $a_i\le a_{i+1}+1$ for all $i$, while conversely, if $a_i\le a_{i+1}+1$ for all $i$, then there is a map $\Phi$ with $(\Ee, \Phi )$ stable and the set of all such $\Phi$ is a non-empty open subset of the vector space $H^0(\mathcal{E}nd (\Ee )(1))$.

Now assume that $m=2$ and so $\Phi \in \mathrm{End}(\Ee)$. If $a_1=\cdots = a_r$, then $\Phi$ is given by an $(r\times r)$-matrix of constants. Since the matrix has an eigenvector, the pair $(\Ee ,\Phi)$ is strictly semistable for any $\Phi$. Now assume $a_1>a_r$ and let $h$ be the maximal integer $i$ with $a_i=a_1$. Write $\Ee \cong\Ff \oplus \Gg$ with $\Ff := \oplus _{i=1}^{h} \Oo _{\PP^1}(a_i)$ and $\Gg:= \oplus _{i=h+1}^r \Oo _{\PP^1}(a_i)$. Since any map $\Ff \rightarrow \Gg$ is the zero map, we have $\Phi (\Ff )\subseteq \Ff$ for any $\Phi : \Ee \rightarrow \Ee$ and so $(\Ee ,\Phi)$ is not semistable.
\end{example}

\subsection{Projective spaces}
In \cite{BH} we introduce a simple way of constructing nilpotent co-Higgs sheaves $(\Ee, \Phi)$ of rank $r\ge 2$, fitting into the exact sequence
\begin{equation}\label{eeqb}
0 \to \Oo _X^{\oplus (r-1)}\to \Ee \to \Ii _Z\otimes \Aa \to 0
\end{equation}
for a two-codimensional locally complete intersection $Z\subset X$ and $\Aa \in \Pic (X)$ such that $H^0(T_X\otimes \Aa^\vee)\ne 0$. We replace $T_X$ by $T_X(-\log \Dd)$ for a simple normal crossing divisor $\Dd$ in (\ref{eeqb}) to obtain $2$-nilpotent $\Dd$-logarithmic co-Higgs sheaves. 

\begin{example}\label{eex}
Let $X=\PP^n$ with $n\ge 2$ and take $\Dd=\{D_1, \ldots, D_m\}$ with $D_i\in |\Oo_{\PP^n}(1)|$. If $1\le  m \le n$, we have $T_{\PP^n}(-\log \Dd) \cong \Oo_{\PP^n}^{\oplus (m-1)} \oplus \Oo_{\PP^n}(1)^{\oplus (n-m+1)}$ by \cite{DK}, and in particular we have $h^0(T_{\PP^n}(-\log \Dd)(-1))>0$. Thus we may apply the proof of \cite[Theorem 1.1]{BH} to get the following: here the invariant $x_{\Ee}$ is defined to be the maximal integer $x$ such that $h^0(\Ee(-x))\ne 0$. 

\begin{proposition}\label{ttts}
The set of nilpotent maps $\Phi : \Ee \rightarrow \Ee \otimes T_{\PP^n}(-\log \Dd)$ on a fixed stable reflexive sheaf $\Ee$ of rank two on $\PP^n$ is an $(n-m+1)$-dimensional vector space only if $c_1(\Ee)+2x_{\Ee}=-3$. In the other cases the set is trivial. 
\end{proposition}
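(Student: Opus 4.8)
The plan is to follow the proof of \cite[Theorem 1.1]{BH} line by line, the only change being that $T_{\PP^n}$ is replaced everywhere by the bundle
$$L:=T_{\PP^n}(-\log\Dd)\cong\Oo_{\PP^n}^{\oplus(m-1)}\oplus\Oo_{\PP^n}(1)^{\oplus(n-m+1)}.$$
Since tensoring $\Ee$ by a line bundle does not affect the set of nilpotent logarithmic co-Higgs fields, I would normalise so that $c_1(\Ee)\in\{0,-1\}$; under this normalisation the twist-independent condition $c_1(\Ee)-2x_\Ee=1$ becomes exactly $c_1(\Ee)+2x_\Ee=-3$.

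First I would describe the shape of a nonzero $\Phi$ with $\Phi\circ\Phi=0$. Flatness of $L$ gives $\ker(\Phi\otimes\id_L)=(\ker\Phi)\otimes L$, so $\Phi\circ\Phi=0$ forces $\Image\Phi\subseteq(\ker\Phi)\otimes L$; as $\rk\Ee=2$ and $\Phi\ne0$ this is possible only when $\rk\ker\Phi=\rk\Image\Phi=1$. Since $\Image\Phi\cong\Ee/\ker\Phi$ is torsion-free, $\ker\Phi$ is a saturated subsheaf of the reflexive sheaf $\Ee$, hence itself reflexive of rank one, hence a line bundle $\Oo_{\PP^n}(a)\subseteq\Ee$; consequently $\Ee/\Oo(a)\cong\Ii_W\otimes\Oo(c_1(\Ee)-a)$ for some $W$ with $\codim W\ge2$, and $\Phi$ is the composite
$$\Ee\onto\Ii_W\bigl(c_1(\Ee)-a\bigr)\xrightarrow{\ \sigma\ }\Oo(a)\otimes L\hookrightarrow\Ee\otimes L.$$
Conversely every $\sigma$ yields a nilpotent $\Phi$, since its image lies in $\Oo(a)\otimes L=(\ker\Phi)\otimes L$. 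Using $\mathcal{H}om(\Ii_W,\Oo_{\PP^n})\cong\Oo_{\PP^n}$ (valid because $\codim W\ge2$), the nilpotent fields with a prescribed kernel $\Oo(a)$ are therefore the vector space
$$\Hom\bigl(\Ii_W(c_1(\Ee)-a),\Oo(a)\otimes L\bigr)\cong H^0\bigl(\Oo_{\PP^n}(2a-c_1(\Ee))\otimes L\bigr).$$

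Next I would pin down $a$. Substituting the splitting of $L$, the last space equals $H^0(\Oo(2a-c_1(\Ee)))^{\oplus(m-1)}\oplus H^0(\Oo(2a-c_1(\Ee)+1))^{\oplus(n-m+1)}$, which is nonzero precisely when $2a-c_1(\Ee)\ge-1$. On the other hand $\Oo(a)\hookrightarrow\Ee$ gives $h^0(\Ee(-a))\ne0$, so $a\le x_\Ee$, and stability of $\Ee$ gives $x_\Ee<\mu(\Ee)=c_1(\Ee)/2$, i.e. $2x_\Ee\le c_1(\Ee)-1$. Hence, if a nonzero nilpotent $\Phi$ exists, $c_1(\Ee)-1\le 2a\le 2x_\Ee\le c_1(\Ee)-1$, so all these are equalities: $a=x_\Ee$ and $2x_\Ee=c_1(\Ee)-1$. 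With $c_1(\Ee)\in\{0,-1\}$ this is possible only for $c_1(\Ee)=-1$, $x_\Ee=-1$, that is $c_1(\Ee)+2x_\Ee=-3$; otherwise the set of nilpotent fields is $\{0\}$. When $c_1(\Ee)+2x_\Ee=-3$ we have $2a-c_1(\Ee)=-1$, so for the (unique admissible) kernel the space of nilpotent fields is
$$H^0\bigl(\Oo_{\PP^n}(-1)\otimes L\bigr)=H^0(\Oo_{\PP^n}(-1))^{\oplus(m-1)}\oplus H^0(\Oo_{\PP^n})^{\oplus(n-m+1)}\cong\CC^{\,n-m+1}.$$

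What remains — and this is the delicate point, to be handled as in \cite[Theorem 1.1]{BH} — is to check that the set of \emph{all} nilpotent $\Phi$ is this single linear subspace, not a union of subspaces indexed by the possible kernels: applying the displayed identification to each line-bundle summand of $L$ shows that every nonzero component of a nilpotent $\Phi$ kills the line subbundle $\Oo(x_\Ee)$, so all nilpotent fields factor through the same quotient $\Ee/\Oo(x_\Ee)$, whence closure under addition. The genuinely new content relative to \cite[Theorem 1.1]{BH} is the cohomology count in the last display: the nonvanishing threshold $2a-c_1(\Ee)\ge-1$ is insensitive to $m$ (so the numerical condition $c_1(\Ee)+2x_\Ee=-3$ is the same as for $\Dd=\emptyset$), while the trivial summands $\Oo_{\PP^n}^{\oplus(m-1)}$ contribute nothing at this twist, lowering the dimension from $n+1$ to $n-m+1$.
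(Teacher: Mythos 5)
Your reconstruction follows exactly the route the paper intends: the paper offers no independent argument for Proposition \ref{ttts} beyond substituting the Dolgachev--Kapranov splitting $T_{\PP^n}(-\log\Dd)\cong\Oo_{\PP^n}^{\oplus(m-1)}\oplus\Oo_{\PP^n}(1)^{\oplus(n-m+1)}$ into the proof of \cite[Theorem 1.1]{BH}, and that is what you do. The numerical heart of your argument is correct: a nonzero $2$-nilpotent $\Phi$ has saturated rank-one kernel $\Oo_{\PP^n}(a)\subset\Ee$, the fields with that kernel form $H^0(\Oo_{\PP^n}(2a-c_1(\Ee))\otimes T_{\PP^n}(-\log\Dd))$, and the squeeze $c_1-1\le 2a\le 2x_\Ee\le c_1-1$ coming from stability together with the non-vanishing threshold $2a-c_1\ge-1$ yields both the necessity of $c_1(\Ee)+2x_\Ee=-3$ (in the normalization $c_1\in\{0,-1\}$, which you rightly make explicit) and the count $h^0(T_{\PP^n}(-\log\Dd)(-1))=n-m+1$.

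The one step that does not hold up is the closing claim that all nonzero nilpotent fields share the same kernel, ``whence closure under addition.'' Your justification tacitly assumes that the maximal line subsheaf $\Oo_{\PP^n}(x_\Ee)\subset\Ee$ is unique, i.e.\ $h^0(\Ee(-x_\Ee))=1$; stability only gives $x_\Ee<c_1(\Ee)/2$ and does not force this. For instance $\Ee=T_{\PP^2}(-2)$ is stable and locally free with $c_1=-1$, $x_\Ee=-1$ and $h^0(\Ee(1))=3$, so there is a $\PP^2$ of saturated copies of $\Oo_{\PP^2}(-1)$ inside $\Ee$, and the nonzero nilpotent fields form a union of $(n-m+1)$-dimensional subspaces pairwise meeting only in $0$ (distinct kernels), not a single linear subspace. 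This does not damage the proposition as literally stated, since it is an ``only if'' together with triviality in the remaining cases --- both of which your argument does establish --- but the extra assertion should either be dropped or be accompanied by a hypothesis guaranteeing $h^0(\Ee(-x_\Ee))=1$.
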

\end{example}

\begin{remark}
Consider the case $m=n+1$ in Example \ref{eex} with $\cup_{i=1}^{n+1}D_i = \emptyset$. Then we have $T_{\PP^n}(-\log \Dd )\cong \Oo _{\PP^n}^{\oplus n}$. Let $\Ee$ be a reflexive sheaf of rank $r\ge 2$ on $\PP^n$ with a semistable (resp. stable) logarithmic co-Higgs structure $(\Ee,\Phi)$. Note that if $\Phi$ is trivial, the semistability (resp. stability) of $(\Ee ,\Phi)$ is equivalent to the semistability (resp. stability) of $\Ee$. Now assume $\Phi \ne 0$. Since $T_{\PP^n}(-\log \Dd )\cong \Oo _{\PP^n}^{\oplus n}$, $\Ee$ is not simple and in particular it is not stable. We claim that $\Ee$ is semistable. If not, call $\Gg$ be the first step of the Harder-Narasimhan filtration of $\Ee$. By a property of the Harder-Narisimhan filtration there is no non-zero map $\Gg \rightarrow \Ee /\Gg$ and so no non-zero map $\Gg \rightarrow ( \Ee /\Gg)\otimes T_{\PP^n}(-\log \Dd)$. Thus we get $\Phi (\Gg )\subseteq \Gg \otimes T_{\PP^n}(-\log \Dd )$, contradicting the semistability of $(\Ee, \Phi)$. Now assume $n=2$ and take $\Aa \cong \Oo _{\PP^2}$ in (\ref{eeqb}) with $\deg (Z)\ge r-1$. Then we get many strictly semistable and indecomposable vector bundles $\Ee$ with $\Phi \ne 0$ and $2$-nilpotent.
\end{remark}

%\begin{theorem}\cite{DK}\label{DKthm}
%If $D_i\in |\Oo_{\PP^n}(1)|$, we have
%$$T_{\PP^n}(-\log \Dd) \cong
%\left\{
%\begin{array}{ll}
%\Oo_{\PP^n}^{\oplus (m-1)} \oplus \Oo_{\PP^n}(1)^{\oplus (n-m+1)} &\hbox{ if $1\leq m \leq n+1$ } \\
%\Omega_{\PP^n}^1(1) &\hbox{ if $m=n+2$ }
% \end{array}
% \right.$$
%\end{theorem}\end{example}

\begin{example}
Let $X=\PP^2$ and take $\Dd = \{D\}$ with $D$ a smooth conic. Since $h^0(T_{\PP^2}) =8$ and $h^0(\Oo _D(D)) =h^0(\Oo _D(2)) =5$, we have $h^0(T_{\PP^2}(-\log \Dd )) >0$ from (\ref{log1}). By taking $\Aa \cong\Oo _{\PP^2}$ in \cite[Equation (1) of Condition 2.2]{BH}, we get a strictly semistable logarithmic co-Higgs bundle $(\Ee ,\Phi)$ with a non-zero co-Higgs field $\Phi$, where $\Ee$ is strictly semistable of any arbitrary rank $r\ge 2$ with any non-negative integer $c_2(\Ee )=\deg (Z)$. Moreover, for any integer $c_2(\Ee)\ge r-1$ we may find an indecomposable one.
\end{example}

\begin{example}
Let $X\subset \PP^{n+1}$ be a smooth quadric hypersurface. Let $D \subset X$ be a smooth hyperplane section of $X$ with $H\subset \PP^{n+1}$ the hyperplane such that $D=X \cap H$ and take $\Dd=\{D\}$. If $p\in \PP^{n+1}$ is the point associated to $H$ by the isomorphism between $\PP^{n+1}$ and its dual induced by an equation of $X$, then we have $p\notin X$ since $X$ is smooth. Letting $\pi_p: X\rightarrow \PP^n$ denote the linear projection from $p$, we have $T_X(-\log \Dd )\cong \pi_p ^\ast (\Omega ^1_{\PP^n}(2))$ by \cite[Corollary 4.6]{BHlog}. Since $\Omega ^1_{\PP^n}(2)$ is globally generated, so is $T_X(-\log \Dd )$ and in particular $H^0(T_X(-\log \Dd)) \ne 0$. By taking $\Aa \cong \Oo _X$ in \cite[Equation (1) of Condition 2.2]{BH}, we get a strictly semistable logarithmic co-Higgs bundle $(\Ee ,\Phi)$ with a non-zero co-Higgs field $\Phi$, where $\Ee$ is strictly semistable of any arbitrary rank $r\ge 2$.
\end{example}

\subsection{Smooth quadric surfaces}\label{quad}
Let $X = \PP^1\times \PP^1$ be a smooth quadric surface and we may assume for a vector bundle $\Ee$ of rank two that 
$$\det (\Ee )\in \{\Oo _X,\Oo _X(-1,0),\Oo _X(0,-1),\Oo _X(-1,-1)\}.$$
The case of the usual co-Higgs bundle with $\Dd=\emptyset$ is done in \cite[Theorem 4.3]{VC1}. We assume either
\begin{itemize}
\item [(i)] $\Dd \in \left \{|\Oo _X(1,0)|,|\Oo _X(2,0)|,|\Oo _X(0,1)|,|\Oo _X(0,2)|\right\}$, or
\item [(ii)]$\Dd =L\cup R$ with $L\in |\Oo _X(1,0)|$ and $R\in |\Oo _X(0,1)|$.
\end{itemize}
In the latter case $T_X(-\log \Dd)$ fits into the exact sequence
\begin{equation}\label{eqv2}
0 \to T_X(-\log \Dd )\to \Oo _X(2,0)\oplus \Oo _X(0,2)\to \Oo _L \oplus \Oo _R\to 0,
\end{equation}
because $\Oo _L(L)\cong \Oo _L$, $\Oo _R({R})\cong \Oo _R$ and $T_X\cong \Oo_X(2,0)\oplus \Oo_X(0,2)$. In particular, we have $h^0(T_X(-\log  \Dd )(i,j)) >0$ for all $(i,j)\in \{(0,0), (-1,0), (0,-1)\}$. We may also consider the following cases:
\begin{itemize}
\item [(iii)] $\Dd = L\cup L'\cup R$ with $L, R$ as above and $L\ne L'\in |\Oo _X(1,0)|$; we still have $h^0(T_X(-\log \Dd )(i,j)) >0$ for $(i,j)\in \{ (0,0), (0,-1)\}$. 
\item [(iv)] $\Dd =L\cup L'\cup R\cup R'$ with $L$, $L'$, $R$ as above and $R\ne R'\in |\Oo _X(0,1)|$.
\end{itemize}
Indeed, if $\Dd$ consists of $a$ lines in $|\Oo_X(1,0)|$ and $b$ lines in $|\Oo_X(0,1)|$, then we have $T_X(-\log \Dd)\cong \Oo_X(2-a,0)\oplus \Oo_X(0,2-b)$ by \cite[Proposition 6.2]{BHlog}. 

Assume that $\Ee$ fits into the following exact sequence as in \cite[Equation (3.1)]{VC1} 
\begin{equation}\label{eqv1}
0\to \Oo _X(r,d)\to \Ee \to \Oo _X(r',d')\otimes \Ii _Z\to 0,
\end{equation}
where $Z\subset X$ is a zero-dimensional scheme, $\det (\Ee ) \cong \Oo _X(r+r',d+d'')$ and $c_2(\Ee )=\deg (Z)+rd' +r'd$. Note that that any logarithmic co-Higgs bundle is co-Higgs in the usual sense and so the set of all $(c_1,c_2)$ allowed for $\Dd$ is contained in the one allowed for $\Dd =\emptyset$. In particular, if we are concerned only in $\Oo _X(1,1)$-semistability, the possible pairs $(c_1,c_2)$ are contained in the one described in \cite[Theorem 4.3]{VC1}. Moreover, any existence for the case $\Dd =L\cup R$ implies the existence for $\Dd \in \{|\Oo _X(1,0)|,\Oo _X(0,1)|\}$. 

\quad (a) First assume $\det (\Ee )\cong \Oo _X$ and we prove the existence for $c_2\ge 0$. In this case we take $r=d=r'=d' =0$ and the $2$-nilpotent co-Higgs structure induced by $\Ii _Z \rightarrow T_X(-\log \Dd )$, i.e. by a non-zero section of $T_X(-\log \Dd)$. This construction gives $(\Ee ,\Phi)$ with $\Ee$ strictly semistable for any polarization.

\quad (b) Assume $\det (\Ee )\cong \Oo _X(-1,0)$ by symmetry and see the existence for $c_2\ge 0$. In case $h^0(T_X(-\log \Dd )(-1,0)) >0$, we take $(r,r',d,d')=(-1,0,0,0)$ and $\Phi$ induced by a non-zero map $\Ii _Z \rightarrow T_X(-\log \Dd)(-1,0)$. Then $\Ee$ is stable for every polarization, unless $Z=\emptyset$ and $\Ee$ splits, because $Z\ne \emptyset$ would imply $h^0(\Ee )=0$; even when $Z=\emptyset$ and so $\Ee \cong \Oo _X\oplus \Oo _X(-1,0)$, the pair $(\Ee ,\Phi)$ is stable for every polarization.

\quad (c) Assume $\det (\Ee)\cong \Oo _X(-1,-1)$ and take $(r,d)=(-1,0)$ and $(r', d')=(0,-1)$ with $\Dd \in |\Oo _X(1,0)|$. Then we have $h^0(T_X(-\log \Dd )(-1,1)) >0$ and $c_2(\Ee ) =\deg (Z)+1$. We get that $\Ee$ is semistable with respect to $\Oo _X(1,1)$.

%%%%%%%%%%%%%%%%%%%%%%%%
\section{Existence}

\begin{proposition}\label{aa1}
Assume $\dim (X)=2$ and let $\Dd \subset X$ be a simple normal crossing divisor. For fixed $\Ll \in \mathrm{Pic}(X)$ and an integer $r\ge 2$, there exists an integer $n=n_{X,\Dd}(\Ll, r)$ such that for all integers $c_2\ge n$ there is a $2$-nilpotent $\Dd$-logarithmic co-Higgs bundle $(\Ee ,\Phi)$ with $\Phi \ne 0$, where $\Ee$ is an indecomposable vector bundle of rank $r$ with Chern classes $c_1(\Ee)\cong \Ll$ and $c_2(\Ee)=c_2$. 
\end{proposition}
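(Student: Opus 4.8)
The plan is to construct $(\Ee,\Phi)$ via the Hartshorne--Serre correspondence, exactly as in the mechanism of \eqref{eeqb} but with $T_X$ replaced by $T_X(-\log\Dd)$. First I would fix an ample line bundle $\Aa$ on $X$ with $\Aa^{\otimes N}\cong \Ll$ for a suitable choice that makes $H^0(T_X(-\log\Dd)\otimes\Aa^\vee)\neq 0$ guaranteed for all large twists; concretely, since $T_X(-\log\Dd)$ is locally free of rank $2$ on the surface $X$, after twisting by a sufficiently negative power of an ample bundle we may assume $\Bb:=T_X(-\log\Dd)\otimes\Aa^\vee$ is globally generated and in particular has a non-zero section $s\in H^0(T_X(-\log\Dd)\otimes\Aa^\vee)$ whose zero locus $Z$ is a reduced zero-dimensional subscheme (a local complete intersection, automatic here in codimension two). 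The composite $\Ee\to\Ee\otimes\Aa^\vee\hookrightarrow \Ee\otimes T_X(-\log\Dd)\otimes\Aa^\vee\otimes\Aa=\Ee\otimes T_X(-\log\Dd)$ obtained from the surjection $\Ee\to \Ii_Z\otimes\Aa$, the section $s$, and the inclusion $\Oo_X^{\oplus(r-1)}\hookrightarrow\Ee$ gives a map $\Phi$ which is $2$-nilpotent because its image lands in the copy of $\Oo_X^{\oplus(r-1)}$ that $\Phi$ kills; this is the same nilpotency argument as in \cite{BH1,BH}. The integrability $\Phi\wedge\Phi=0$ is automatic from $2$-nilpotency, as noted after the definition.

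Next I would arrange the numerical data. Choosing $Z$ to be a general reduced subscheme of length $\ell$ in the linear system of sections of $T_X(-\log\Dd)\otimes\Aa^\vee$ (or enlarging $\Aa$ so that this system is very ample and then varying $Z$ freely), the extension \eqref{eeqb}, namely $0\to\Oo_X^{\oplus(r-1)}\to\Ee\to\Ii_Z\otimes\Aa\to 0$, yields a rank-$r$ sheaf $\Ee$ with $c_1(\Ee)=\Aa$ and $c_2(\Ee)=\deg(Z)+(\text{correction terms depending only on }\Aa,r)$; more generally, to hit an arbitrary prescribed $\Ll$ and not just powers of an ample bundle, I would instead twist the whole sequence, replacing $\Oo_X^{\oplus(r-1)}$ by $\Ll\otimes\Aa^\vee$ possibly after a further adjustment, so that $\det\Ee\cong\Ll$; the point is that both $c_1$ and the freedom to raise $c_2$ by increasing $\deg(Z)$ can be controlled, so that for all $c_2\ge n_{X,\Dd}(\Ll,r)$ a suitable $Z$ of the right length exists. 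That the resulting $\Ee$ is a vector bundle (not merely reflexive) on the surface uses that the non-locally-free locus of a reflexive sheaf on a smooth surface is empty, which is why the case $\dim X=2$ gives local freeness for free.

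Finally I would show $\Ee$ is indecomposable. The standard trick is that any splitting $\Ee=\Ee_1\oplus\Ee_2$ would, via the sequence \eqref{eeqb}, force a compatible splitting of $\Ii_Z\otimes\Aa$ or produce a sub-line-bundle of $\Ee$ contradicting the choice of $Z$; more precisely, one checks that $h^0(\Ee\otimes\Aa^\vee)$ or the rank-one pieces of a hypothetical decomposition are constrained by the cohomology of $\Ii_Z\otimes\Aa$ and, for $Z$ general and $\deg Z$ large, no such decomposition survives. Alternatively, one fixes the extension class in $\Ext^1(\Ii_Z\otimes\Aa,\Oo_X^{\oplus(r-1)})\cong H^1(\Ii_Z\otimes\Aa^\vee)^{\oplus(r-1)}\oplus (\text{an }\Hom\text{ term})$ to be ``non-degenerate'' so that no quotient line bundle splits off, exactly in the spirit of the arguments in \cite{BH}; this is the step I expect to be the main obstacle, since it requires a careful choice of both $Z$ and the extension class, and pinning down the explicit bound $n_{X,\Dd}(\Ll,r)$ (promised in the introduction) amounts to making the cohomological vanishing $H^1(\Ii_Z\otimes(\text{twist}))=0$ and the genericity of $Z$ effective. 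Everything else — the existence of the section $s$ with reduced zero locus, the $2$-nilpotency, the integrability, and the Chern class bookkeeping — is routine once $T_X(-\log\Dd)$ is known to be locally free with enough sections, which \eqref{log1} and the surface hypothesis supply.
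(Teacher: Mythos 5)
Your overall strategy is the same as the paper's: a Serre-type extension $0\to\Oo_X^{\oplus(r-1)}\to\Ee\to\Ii_Z\otimes\Aa\to 0$, twisted so that $\det(\Ee)\cong\Ll$, with the $2$-nilpotent field obtained by composing the surjection onto the rank-one quotient, a non-zero section of a suitable twist of $T_X(-\log\Dd)$, and the inclusion of one factor of the subsheaf. (The paper writes the twist out explicitly, taking sub $(\Ll\otimes\Rr)^{\oplus(r-1)}$ and quotient $\Ii_S\otimes(\Ll^{\otimes(r-2)}\otimes\Rr^{\otimes(r-1)})^\vee$ for a very ample $\Rr$ with $h^0(T_X(-\log\Dd)\otimes\Ll^{\otimes(r-1)}\otimes\Rr^{\otimes r})>0$; this is also what makes the bound $n_{X,\Dd}(\Ll,r)$ explicit.) However, your write-up has two genuine gaps.

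First, your justification of local freeness is incorrect. You claim that on a surface ``local freeness comes for free'' because a reflexive sheaf on a smooth surface is locally free. That implication is true, but the middle term of an extension of $\Ii_Z\otimes\Aa$ by a locally free sheaf is only torsion-free, not automatically reflexive: the split extension $\Oo_X^{\oplus(r-1)}\oplus(\Ii_Z\otimes\Aa)$ with $Z\ne\emptyset$ is torsion-free and not locally free. Local freeness holds only for extension classes satisfying the Cayley--Bacharach condition, and the paper arranges this by imposing $h^0(\omega_X\otimes(\Ll^{\otimes(r-1)}\otimes\Rr^{\otimes r})^\vee)=0$ on the auxiliary very ample $\Rr$, which makes the condition vacuous for a general extension. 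This hypothesis is absent from your argument, so as written the construction may produce a sheaf that is not a vector bundle. Second, the indecomposability step, which you yourself flag as the main obstacle, is only gestured at: you name two possible strategies but carry out neither. The paper's argument is concrete: since $\Rr$ is very ample and $\Ll\otimes\Rr$ is spanned, the subsheaf $(\Ll\otimes\Rr)^{\oplus(r-1)}$ is recovered canonically from $\Ee$ via an evaluation map, so any direct-sum decomposition would force $\Ll\otimes\Rr$ to split off as a direct factor; and since $\sharp(S)\ge r$ gives $\mathrm{ext}_X^1(\Ii_S\otimes(\Ll^{\otimes(r-2)}\otimes\Rr^{\otimes(r-1)})^\vee,\Oo_X)\ge r$, a general choice of extension class rules this out. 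Some argument of this kind is needed before indecomposability can be asserted.
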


\begin{proof}
Fix a very ample $\Rr\in \Pic (X)$ such that 
\begin{itemize}
\item $h^0(\omega _X\otimes (\Ll^{\otimes (r-1)} \otimes \Rr^{\otimes r})^\vee )= 0$;
\item $h^0(T_X(-\log \Dd)\otimes \Ll ^{\otimes (r-1)}\otimes \Rr ^{\otimes r})>0$;
\item $\Ll \otimes \Rr$ is spanned. 
\end{itemize}
Set 
$$n=n_{X,\Dd}(r,\Ll):=r-(r-1)(r-2)\Ll^2 - (r-1)^2 \Rr^2-(2r-3)(r-1)\Ll {\cdot}\Rr.$$
For each $c_2\ge n$, let $S\subset X$ be a union of general $(c_2 +r-n)$ points and consider a general extension
$$0\to (\Ll \otimes \Rr)^{\oplus (r-1)}\to \Ee \to \Ii _S\otimes (\Ll^{\otimes (r-2)}\otimes \Rr ^{\otimes (r-1)})^\vee \to 0.$$
From the choice of $\Rr$ the Cayley-Bacharach condition is satisfied and so $\Ee$ is locally free with $c_1(\Ee)\cong \Ll$ and $c_2(\Ee )=c_2$. Now from a non-zero section in $H^0(T_X(-\log \Dd)\otimes \Ll^{\otimes (r-1)} \otimes \Rr ^{\otimes r})$ we have a non-zero map $\phi : \Ii _S\otimes (\Ll^{\otimes (r-2)}\otimes \Rr ^{\otimes (r-1)})^\vee \rightarrow \Ll \otimes \Rr \otimes T_X(-\log \Dd )$, inducing a non-zero map $\Phi : \Ee \rightarrow \Ee \otimes T_X(-\log \Dd )$ that is $2$-nilpotent and so integrable. 

Thus to complete the proof it is sufficient to prove that $\Ee$ is indecomposable for a suitable $\Rr$. Assume $\Ee \cong \Ee _1\oplus \cdots \oplus \Ee _k$ with $k\ge 2$ and each $\Ee_i$ indecomposable and locally free of positive rank. Since $\Rr$ is very ample and $\Ll \otimes \Rr$ is spanned, the image of the evaluation map $H^0(\Ee)\otimes \Oo _X\rightarrow \Ee$ is isomorphic to $(\Ll \otimes \Rr)^{\oplus (r-1)}$ and its cokernel is isomorphic to $ \Ii _S\otimes (\Ll^{\otimes (r-2)}\otimes \Rr ^{\otimes (r-1)})^\vee$. Thus, up to a permutation of the factors, we have $(\Ll \otimes \Rr)^{\oplus (r-1)}\cong \Ee _1\oplus \cdots \oplus \Ee _{k-1}\oplus \Ff$ with $\Ff$ a vector bundle and $\Ee _k/\Ff \cong  \Ii _S\otimes (\Ll^{\otimes (r-2)}\otimes \Rr ^{\otimes (r-1)})^\vee$. Since $\Ee _1$ is indecomposable, we get that $\Ee _1\cong \Ll\otimes \Rr$. But since $\sharp (S) \ge r$, we have $\mathrm{ext}_X^1( \Ii _S\otimes (\Ll^{\otimes (r-2)}\otimes \Rr ^{\otimes (r-1)})^\vee ,\Oo _X)\ge r$ and so we may choose $\Ee$ so that $\Ll \otimes \Rr $ is not a factor of $\Ee$.
\end{proof}

\begin{proposition}\label{aa2}
Assume $n=\dim (X)\ge 3$ and let $\Dd\subset X$ be a simple normal crossing divisor. For a fixed $\Ll \in \mathrm{Pic}(X)$ and an integer $r\ge n$, there exists a $2$-nilpotent $\Dd$-logarithmic co-Higgs bundle $(\Ee ,\Phi)$, where $\Ee$ is an indecomposable vector bundle of rank $r$ on $X$ with $\det (\Ee )\cong \Ll$.
\end{proposition}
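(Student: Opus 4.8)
The plan is to mimic the proof of Proposition \ref{aa1}, using the Hartshorne--Serre correspondence to build a reflexive---and in fact locally free---sheaf $\Ee$ as an extension of $(r-1)$ copies of a suitable line bundle by the twisted ideal sheaf of a codimension-two subscheme, and then to produce the $2$-nilpotent logarithmic co-Higgs field from a nonzero global section of an appropriate twist of $T_X(-\log \Dd)$. First I would fix a very ample $\Rr \in \Pic(X)$ satisfying the three vanishing/spanning conditions used in Proposition \ref{aa1}, namely $h^0(\omega_X \otimes (\Ll^{\otimes(r-1)} \otimes \Rr^{\otimes r})^\vee) = 0$, $h^0(T_X(-\log \Dd) \otimes \Ll^{\otimes(r-1)} \otimes \Rr^{\otimes r}) > 0$, and $\Ll \otimes \Rr$ spanned; such $\Rr$ exists since $\Ll^{\otimes(r-1)} \otimes \Rr^{\otimes r}$ becomes arbitrarily positive as $\Rr$ grows, so Serre vanishing and global generation apply, and $T_X(-\log \Dd) \otimes (\text{ample})$ has sections.

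Next I would choose a smooth codimension-two subvariety (or a general complete-intersection-type subscheme) $Z \subset X$ lying on members of the linear system associated to $\Ll \otimes \Rr$, arranged so that the Cayley--Bacharach condition for $\omega_X \otimes \Ll^{\otimes(r-1)} \otimes \Rr^{\otimes r}$ holds; by Hartshorne--Serre this yields a vector bundle $\Ee$ of rank $r$ fitting into
\begin{equation*}
0 \to (\Ll \otimes \Rr)^{\oplus(r-1)} \to \Ee \to \Ii_Z \otimes (\Ll^{\otimes(r-2)} \otimes \Rr^{\otimes(r-1)})^\vee \to 0,
\end{equation*}
with $\det(\Ee) \cong \Ll$. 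Here the hypothesis $r \ge n = \dim(X)$ is exactly what guarantees, via the standard bound on the non-locally-free locus in the Hartshorne--Serre construction, that $\Ee$ is genuinely locally free rather than merely reflexive. Then a nonzero section $s \in H^0(T_X(-\log \Dd) \otimes \Ll^{\otimes(r-1)} \otimes \Rr^{\otimes r})$ gives a nonzero map $\Ii_Z \otimes (\Ll^{\otimes(r-2)} \otimes \Rr^{\otimes(r-1)})^\vee \to (\Ll \otimes \Rr) \otimes T_X(-\log \Dd)$, hence a composite $\Phi : \Ee \twoheadrightarrow \Ii_Z \otimes (\cdots)^\vee \to (\Ll \otimes \Rr)^{\oplus(r-1)} \otimes T_X(-\log \Dd) \hookrightarrow \Ee \otimes T_X(-\log \Dd)$ which satisfies $\Phi \circ \Phi = 0$ (the image lies in the kernel of the projection to $\Ii_Z \otimes (\cdots)^\vee$), so $\Phi$ is $2$-nilpotent and therefore integrable.

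Finally I would establish indecomposability by the same argument as in Proposition \ref{aa1}: if $\Ee \cong \Ee_1 \oplus \cdots \oplus \Ee_k$ with $k \ge 2$, then since $\Rr$ is very ample and $\Ll \otimes \Rr$ is spanned, the image of the evaluation map $H^0(\Ee) \otimes \Oo_X \to \Ee$ is forced to be $(\Ll \otimes \Rr)^{\oplus(r-1)}$ with the stated cokernel, so up to reordering some $\Ee_i \cong \Ll \otimes \Rr$ is a direct summand; choosing the extension class generically (possible because $\mathrm{ext}^1_X(\Ii_Z \otimes (\Ll^{\otimes(r-2)} \otimes \Rr^{\otimes(r-1)})^\vee, \Oo_X)$ is large once $Z$ is big enough) rules this out. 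The main obstacle I anticipate is the locally-free part: one must verify that the Hartshorne--Serre correspondence in dimension $n \ge 3$ does produce a \emph{bundle} and not just a reflexive sheaf, which is precisely where the numerical hypothesis $r \ge n$ enters, and one must be careful about the existence of a smooth (or sufficiently general) $Z$ of the required degree on which the construction can be run --- in higher dimension this requires a Bertini-type argument applied to the very ample system $|\Ll \otimes \Rr|$ (or rather two general members of it) rather than the simple "general union of points" that sufficed when $\dim(X) = 2$.
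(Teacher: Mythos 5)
Your overall strategy coincides with the paper's: build $\Ee$ by the Hartshorne--Serre correspondence as an extension of a twisted ideal sheaf of a codimension-two subscheme by $(r-1)$ copies of a line bundle, and obtain the $2$-nilpotent field $\Phi$ from a nonzero section of a suitable twist of $T_X(-\log \Dd)$ by composing quotient, section, and inclusion. (Your choice to absorb the twist into the extension rather than first reducing to the case $\Ll^\vee$ very ample and twisting at the end, as the paper does, is an acceptable cosmetic variation.) However, there is a genuine gap at the one step you yourself flag as the main obstacle: local freeness. You invoke the Cayley--Bacharach condition, but that is the criterion for the rank-two Serre construction with a zero-dimensional $Z$ on a surface --- it is exactly what Proposition \ref{aa1} uses --- and it is not what controls local freeness when $\dim(X)\ge 3$, $Y$ has positive dimension, and $r\ge 3$. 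In the paper, $Y$ is taken to be a complete intersection of two members of a very ample $|\Hh|$, so that $\Rr:=\wedge^2 N_Y\otimes \Ll^\vee_{|Y}$ is a very ample line bundle on the $(n-2)$-dimensional variety $Y$ with $h^0(Y,\Rr)\ge r-1$; local freeness of $\Ee$ is then equivalent to choosing $r-1$ sections $s_1,\dots,s_{r-1}\in H^0(Y,\Rr)$ that generate $\Rr$ at \emph{every} point of $Y$, and this is possible for general sections precisely because $r-1\ge n-1=\dim(Y)+1$. That is the only place the hypothesis $r\ge n$ enters (compare Proposition \ref{aa2.00}, where $r\le n-1$ forces a non-locally-free locus of dimension $n-r-1$). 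You correctly name the hypothesis and the issue, but you do not supply this argument, and the tool you do name would not produce it.

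A secondary, repairable discrepancy concerns indecomposability. The paper does not transplant the $\mathrm{ext}^1$-dimension count of Proposition \ref{aa1}; instead it uses the uniqueness part of the Hartshorne--Serre correspondence: if $\Ee\cong\Oo_X\oplus\Ff$, then $\Ee$ would be induced by sections $u_1,\dots,u_{r-2},0$, forcing $s_1,\dots,s_{r-1}$ to be linearly dependent, a contradiction. Your ``choose the extension class generically'' is in tension with the fact that the extension class cannot be fully generic --- it must correspond to a spanning $(r-1)$-tuple of sections to keep $\Ee$ locally free --- so if you keep your phrasing you must check that linear dependence of the sections is a proper closed condition compatible with the spanning requirement; the paper's formulation avoids this entirely.
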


\begin{proof}
We first assume that $\Ll ^\vee$ is very ample with
\begin{itemize}
\item $h^1(\Ll ^\vee )=h^2(\Ll ^\vee )=0$, where we use the assumption $n\ge 3$;
\item $h^0(\Ll ^\vee )\ge r-1$ and $h^0(\Ll ^\vee \otimes T_X(-\log \Dd )) > 0$.
\end{itemize}
Fix a very ample line bundle $\Hh$ on $X$ such that $h^0(\Hh ^\vee \otimes \Ll ^\vee )=h^1((\Hh ^\vee )^{\otimes 2}\otimes \Ll ^\vee )=0$, e.g. by taking $\Hh \cong (\Ll ^\vee )^{\otimes 2}$ and applying Kodaira's vanishing. Let $Y\subset X$ be a general complete intersection of two elements of $|\Hh |$ and then $Y$ is a non-empty connected manifold of codimension $2$ with normal bundle $N_Y$, isomorphic to $\Hh_{|Y} ^{\oplus 2}$. The line bundle $\Rr := \wedge ^2N_Y\otimes \Ll ^\vee _{|Y} \cong (\Hh ^{\otimes 2}\otimes \Ll ^\vee )_{|Y}$ is a very ample line bundle on $Y$ and we have $h^0(Y,\Rr )\ge h^0(Y,(\Ll ^\vee )_{|Y})$. From the exact sequence 
$$0\to (\Hh ^\vee )^{\otimes 2}\to (\Hh ^\vee )^{\oplus 2}\to \Ii _Y\to 0$$ 
we get $h^0(\Ii _Y\otimes \Ll^\vee ) =0$ and so $h^0(Y,\Rr )\ge h^0(Y,(\Ll ^\vee )_{|Y}) \ge r-1$. Since $\Rr$ is spanned and $\dim (Y)=n-2$, a general $(n-1)$-dimensional linear subspace $V\subset H^0(Y,\Rr)$ spans $\Rr$. Hence there are linearly independent sections $s_1,\dots ,s_{r-1}$ of $H^0(Y,\Rr)$ spanning $\Rr$. Since $H^2(\Ll ^\vee )=0$, by the Hartshorne-Serre correspondence the sections $s_1,\dots ,s_{r-1}$ give a vector bundle $\Ee$ of rank $r$ fitting into an exact sequence (see \cite[Theorem 1.1]{Arrondo})
$$0\to \Oo _X^{\oplus (r-1)} \to \Ee \to \Ii _Y\otimes \Ll \to 0.$$
In particular we have $\det (\Ee )\cong \Ll$. Any non-zero section of $H^0(\Ll ^\vee \otimes T_X(-\log \Dd ))$ gives a $2$-nilpotent logarithmic co-Higgs structures on $\Ee$ with $\Phi \ne 0$. Now it remains to show that $\Ee$ is indecomposable. Assume $\Ee \cong \Gg_1 \oplus \Gg_2$ with $\Gg_i$ non-zero.  Let $\Gg_i'$ be the image of the evaluation map $H^0(\Gg_i )\otimes \Oo _X\rightarrow \Gg_i$ for $i=1,2$. Since $\Ll ^\vee$ is very ample, we have $h^0(\Ee )=r-1$ and the image of the evaluation map $H^0(\Ee )\otimes \Oo _X\rightarrow \Ee$ is isomorphic to $\Oo _X^{\oplus (r-1)}$ and so $\Gg_1'\oplus \Gg _2'\cong \Oo _X^{\oplus (r-1)}$. In particular, we have $\Gg_i \cong \Gg_i'$ for some $i$ and so at least one of the factors of $\Ee$ is trivial. Set $\Ee \cong \Oo _X\oplus \Ff$ with $\mathrm{rank}(\Ff )=r-1$. By \cite[Theorem 1.1]{Arrondo} the bundle $\Ff$ comes from $u_1,\dots ,u_{r-2}\in H^0(Y,\Rr)$ and so $\Ee$ is induced by the sections $u_1,\dots ,u_{r-2},0$. Since  $H^1(\Ll ^\vee )=0$, the uniqueness part of \cite[Theorem 1.1]{Arrondo} gives that $s_1,\dots ,s_{r-1}$ generate the linear subspace of $H^0(Y,\Rr)$ spanned by $u_1,\dots ,u_{r-2}$ and so they are not linearly independent, a contradiction.

Now we drop any assumption on $\Ll$. Take an integer $m\gg 0$ and set $\Ll' := \Ll \otimes (\Hh ^\vee )^{\otimes (mr)}$. Then we get that $(\Ll') ^\vee $ is very ample and $H^2((\Ll')^\vee )=0$. By the first part there is $(\Ee' ,\Phi' )$ with $\det (\Ee')\cong \Ll'$. We may take $\Ee := \Ee' \otimes (\Hh ^{\vee})^{\otimes m}$ and let $\Phi : \Ee \rightarrow \Ee \otimes T_X(-\log \Dd )$ be the non-zero map induced by $\Phi'$.
\end{proof}

Allowing non-locally free sheaves, we may extend  Proposition \ref{aa2} to all ranks at least two in the following way.

\begin{proposition}\label{aa2.00}
Under the same assumption as in Proposition \ref{aa2} with $2\le r\le n-1$, there exists a $2$-nilpotent $\Dd$-logarithmic co-Higgs reflexive sheaf $(\Ee ,\Phi)$, where $\Ee$ is indecomposable of rank $r$ with $\det (\Ee )\cong \Ll$ and non-locally free locus of dimension at most $(n-r-1)$.
\end{proposition}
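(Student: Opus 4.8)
The plan is to reduce Proposition \ref{aa2.00} to the locally free case treated in Proposition \ref{aa2} by a standard ``elementary modification'' or ``Serre correspondence with a thickened center'' trick. The point is that in Proposition \ref{aa2} the bundle $\Ee$ of rank $r\ge n$ is built from the Hartshorne--Serre correspondence applied to $r-1$ sections of a line bundle $\Rr$ on a codimension-$2$ complete intersection $Y$; when $r<n$ there are not enough sections available to span $\Rr$ globally on $Y$, and the resulting sheaf fails to be locally free exactly along the locus where the chosen sections fail to span, which is a subvariety of $Y$ of the expected codimension $r-1$ inside $Y$, hence of dimension $(n-2)-(r-1)=n-r-1$ in $X$. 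So my first step would be: repeat verbatim the construction in the proof of Proposition \ref{aa2}, but now choosing only $r-1$ general sections $s_1,\dots,s_{r-1}\in H^0(Y,\Rr)$ without insisting that they span $\Rr$ everywhere; since $H^2(\Ll^\vee)=0$ (using $n\ge 3$ once more, or the twist trick at the end), the Hartshorne--Serre correspondence (in the form of \cite[Theorem 1.1]{Arrondo}, which does not require global generation) still produces a reflexive sheaf $\Ee$ of rank $r$ fitting into
\begin{equation*}
0\to \Oo_X^{\oplus(r-1)}\to \Ee\to \Ii_Y\otimes \Ll\to 0,
\end{equation*}
with $\det(\Ee)\cong \Ll$, and whose non-locally-free locus is contained in the degeneracy locus of the map $\Oo_Y^{\oplus(r-1)}\to \Rr$, which for general sections has dimension at most $(n-2)-(r-1)=n-r-1$.

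The second step is to equip $\Ee$ with the co-Higgs field: exactly as before, any nonzero section of $H^0(\Ll^\vee\otimes T_X(-\log\Dd))$ induces a nonzero map $\Ii_Y\otimes\Ll\to T_X(-\log\Dd)$, hence a nonzero composite $\Phi:\Ee\to \Ii_Y\otimes\Ll\to T_X(-\log\Dd)\hookrightarrow \Ee\otimes T_X(-\log\Dd)$, and $\Phi\circ\Phi=0$ because $\Phi$ factors through the quotient $\Ii_Y\otimes\Ll$ and then through the subsheaf $\Oo_X^{\oplus(r-1)}$-direction, so $\Phi$ kills the image of the first map in the sequence and its own image lands in $\Oo_X^{\oplus(r-1)}\otimes T_X(-\log\Dd)\subset \ker(\Phi\otimes\mathrm{id})$. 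Hence $(\Ee,\Phi)$ is a $2$-nilpotent $\Dd$-logarithmic co-Higgs sheaf. If $\Ll^\vee$ is not very ample (or $H^2(\Ll^\vee)\ne0$) one twists by a large negative power of an auxiliary very ample $\Hh$ as in the last paragraph of the proof of Proposition \ref{aa2}: since twisting by a line bundle is an automorphism preserving rank, indecomposability, and the non-locally-free locus, and carries $T_X(-\log\Dd)$-valued maps to $T_X(-\log\Dd)$-valued maps, this reduction is harmless.

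The third step is indecomposability, and this is the one place where I would not simply quote the rank-$\ge n$ argument. The argument there used $h^0(\Ee)=r-1$ together with the fact that the image of the evaluation map is exactly $\Oo_X^{\oplus(r-1)}$; for $r<n$ one still has $h^0(\Ii_Y\otimes\Ll)=0$ from the Koszul resolution of $\Ii_Y$ (this only uses $H^0((\Hh^\vee)^{\otimes j}\otimes\Ll^\vee)=0$ for $j=1,2$, which we may arrange), so $h^0(\Ee)=h^0(\Oo_X^{\oplus(r-1)})=r-1$ still holds and the global sections of $\Ee$ span exactly the trivial subsheaf $\Oo_X^{\oplus(r-1)}$. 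Thus the same reasoning applies: if $\Ee\cong\Gg_1\oplus\Gg_2$ with both factors nonzero, comparing images of evaluation maps forces one factor, say $\Gg_1$, to be a trivial summand $\Oo_X^{\oplus a}$, after which the uniqueness clause of \cite[Theorem 1.1]{Arrondo} (valid because $H^1(\Ll^\vee)=0$, again arranged by the twist) says the data $s_1,\dots,s_{r-1}$ must coincide with the data of the decomposed bundle, forcing a linear dependence among the $s_i$ — contradicting their general choice.

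The main obstacle, and the only genuinely new content, is the dimension bound on the non-locally-free locus. I would make this precise by recalling that in the Hartshorne--Serre construction $\Ee$ is locally free precisely at points of $Y$ where the section tuple $(s_1,\dots,s_{r-1})$ generates $\Rr$ (away from $Y$ it is automatically locally free), so the bad locus is the common zero scheme in $Y$ of $r-1$ general sections of the globally generated line bundle $\Rr$; by Bertini/a standard general-position argument this has codimension $\min(r-1,\dim Y+1)=r-1$ in $Y$ when $r-1\le \dim Y=n-2$, i.e. it is empty when $r-1>n-2$ (recovering Proposition \ref{aa2}) and has dimension exactly $(n-2)-(r-1)=n-r-1$ when $2\le r\le n-1$. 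One subtlety to check is that generality of the $s_i$ is compatible with the generality already imposed (on $Y$, on the very ample $\Rr$, and on the extension class), but since these are finitely many open dense conditions on independent parameter spaces, a common general choice exists. Hence $\Ee$ is reflexive (being the Serre sheaf of a codimension-$2$ object, it is reflexive by \cite[Theorem 1.1]{Arrondo}) with non-locally-free locus of dimension at most $n-r-1$, completing the proof.
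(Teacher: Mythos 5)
Your proposal is correct and follows essentially the same route as the paper: take $(\Hh,Y,\Rr)$ exactly as in Proposition \ref{aa2}, choose $r-1$ general sections of $\Rr$ on $Y$ spanning it outside a locus $T$ of dimension $\dim(Y)-(r-1)=n-r-1$, apply the Hartshorne--Serre correspondence for reflexive sheaves, and reduce the general $\Ll$ to the very ample case by the same twist. The only difference is that you spell out the indecomposability and $2$-nilpotency checks, which the paper leaves implicit by referring back to the proof of Proposition \ref{aa2}.
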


\begin{proof}
We follow the proof of Proposition \ref{aa2}. We first assume that $\Ll ^\vee$ is very ample and take $(\Hh, Y, \Rr)$ as in the proof of Proposition \ref{aa2}. Since $r\ge 2$, we may find $r-1$ elements $s_1,\dots ,s_{r-1}\in H^0(Y,\Rr)$ spanning $\Rr$ outside a subset $T$ of $Y$ with $\dim (T) \le \dim (Y)-r+1 = n-r-1$. By \cite{Hartshorne1}, the sections $s_1,\dots ,s_{r-1}$ give a reflexive sheaf $\Ee$ of rank $r$ on $X$ with $\det (\Ee) \cong \Ll$ and $\Ee$ locally free outside $T$.

The reduction to the case in which $\Ll^\vee$ is very ample can be done, using the argument in the proof of Proposition \ref{aa2}.
\end{proof}

%%%%%%%%%%%%%%%%%%%
%\section{Projective space}

%Let $X=\PP^n$ and $\Dd=\{D_1, \ldots, D_m\}$ be an arrangement of $m$ hyperplanes with simple normal crossings. 

%\begin{theorem}\cite{DK}\label{DKthm}
%If $D_i\in |\Oo_{\PP^n}(1)|$, we have
%$$T_{\PP^n}(-\log \Dd) \cong
%\left\{
%\begin{array}{ll}
%\Oo_{\PP^n}^{\oplus (m-1)} \oplus \Oo_{\PP^n}(1)^{\oplus (n-m+1)} &\hbox{ if $1\leq m \leq n+1$ } \\
%\Omega_{\PP^n}^1(1) &\hbox{ if $m=n+2$ }
% \end{array}
% \right.$$
%\end{theorem}

%In particular, if $m\le n+1$, we get $h^0(T_{\PP^n}(-\log \Dd)\otimes \Oo_{\PP^n}(-1))>0$ and so we may apply \cite[Lemma 2.6]{BH} with $\Aa\cong \Oo_{\PP^n}(1)$ to get a torsion-free sheaf $\Ee$ fitting into 
%$$0\to \Oo_{\PP^n}^{\oplus (r-1)} \to \Ee \to \Ii_Z(1) \to 0,$$
%where $Z$ is a two-codimensional locally complete intersection subscheme. For such a sheaf $\Ee$ we get a logarithmic co-Higgs structure $\Phi:\Ee \rightarrow \Ee \otimes T_{\PP^n}(-\log\Dd)$ such that $\ker (\Phi)\cong \Oo_{\PP^n}^{\oplus (r-1)}$, i.e. we obtain a nilpotent co-Higgs bundle on $\PP^n$ of rank $r$. 

%%%%%%%%%%%%%%%%%%%%%%%%%%

%%%%%%%%%%%%%%%%%%%%%%%%%%

\section{Vanishing along divisors}\label{divisor}
As observed, the notion of logarithmic co-Higgs bundle is not asking for a map $\phi : \Ee \rightarrow \Ee \otimes T_X(-D)$ if $\dim (X)\ge 2$. In this section we study vector bundles of rank two on a projective plane and a smooth quadric surface with sections in $H^0(\mathcal{E}nd(\Ee)\otimes T_X(-D))$. 

\subsection{Projective plane}
Let $X = \PP^2$ and take $D\in |\Oo_{\PP^2}(1)|$ a projective line. Then we have $T_{\PP^2}(-D) =T_{\PP^2}(-1)$ and so $h^0(T_{\PP^2}(-D)) =3$. We may give a $2$-nilpotent co-Higgs structure on a vector bundle $\Ee$ of rank $2$ fitting into the exact sequence
\begin{equation}\label{++}
0 \to \Oo _{\PP^2}\to \Ee \to \Ii _Z\to 0
\end{equation}
from a non-zero section in $H^0(T_{\PP^2}(-D))$. Thus there exists a strictly semistable co-Higgs bundle of rank two for all $c_2\ge 0$, which is indecomposable for $c_2>0$. Indeed for any such bundles with positive $c_2$ we have a three-dimensional vector space of $2$-nilpotent co-Higgs structures. On the contrary we have some results on non-existence of co-Higgs bundles on projective spaces in \cite[Section 3]{BH}. Applying the same argument to $T_{\PP^n}(-1)$, we get the following, as in Proposition \ref{ttts}

\begin{proposition}
If $\Ee$ is a stable reflexive sheaf of rank two on $\PP^n$ with $n\ge 2$, then any nilpotent map $\Phi : \Ee \rightarrow \Ee \otimes T_{\PP^n}(-1)$ is trivial. 
\end{proposition}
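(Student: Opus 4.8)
The plan is to mimic the nilpotency/stability argument used for $T_{\PP^n}(-\log\Dd)$ in Proposition \ref{ttts}, since $T_{\PP^n}(-1)$ is even ``smaller'' than $T_{\PP^n}(-\log\Dd)$ when $1\le m\le n$. First I would recall that a stable reflexive sheaf $\Ee$ of rank two on $\PP^n$ is simple, so $\End(\Ee)=\CC\cdot\mathrm{id}\oplus\mathcal{E}nd_0(\Ee)$, the trace decomposition, and that $\Phi\wedge\Phi=0$ together with $\rk\Ee=2$ forces $\Phi$ to be either a multiple of the identity tensored with a vector field or ``nilpotent'' in the obvious sense; in our case $\Phi$ is assumed nilpotent, so its trace part vanishes and $\Phi\in H^0(\mathcal{E}nd_0(\Ee)\otimes T_{\PP^n}(-1))$ with $\Phi^2=0$. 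A nilpotent endomorphism of a rank-two bundle has image a line subsheaf and kernel containing that line; writing $L:=\mathrm{Im}(\Phi)\subset \Ee\otimes T_{\PP^n}(-1)$ and $M:=\ker(\Phi)\subset\Ee$, one gets a nonzero map $\Ee/M\hookrightarrow \Ee\otimes T_{\PP^n}(-1)$ with $\Ee/M$ torsion-free of rank one, and $M$ of rank one with $\mathrm{Im}(\Phi)\subseteq M\otimes T_{\PP^n}(-1)$.

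Next I would extract the numerical consequence. Taking double duals, $M^{\vee\vee}\cong\Oo_{\PP^n}(a)$ and $(\Ee/M)^{\vee\vee}\cong\Oo_{\PP^n}(b)$ with $a+b=c_1(\Ee)$, and stability of $\Ee$ gives $2a<c_1$, i.e. $a\le b-1$ (after normalizing; more precisely $a<\mu(\Ee)=c_1/2$). Now $\Phi$ restricted to $M$ lands in $M\otimes T_{\PP^n}(-1)$, and composing the surjection $\Ee\onto \Ee/M$ with $\Phi$ gives zero, while the induced nonzero map $\Oo_{\PP^n}(b)\to\Ee\otimes T_{\PP^n}(-1)$ followed by $\Ee\otimes T_{\PP^n}(-1)\onto (\Ee/M)\otimes T_{\PP^n}(-1)$ is zero too, so the image lands in $M\otimes T_{\PP^n}(-1)$; hence there is a nonzero map $\Oo_{\PP^n}(b)\to \Oo_{\PP^n}(a)\otimes T_{\PP^n}(-1)$, i.e. $H^0(T_{\PP^n}(a-b-1))\ne 0$. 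But $a-b-1\le -2$, and $T_{\PP^n}$ has no sections in negative twists (from the Euler sequence $H^0(T_{\PP^n}(-k))=0$ for $k\ge 1$, and in fact $T_{\PP^n}(-1)$ is globally generated so $T_{\PP^n}(-j)$ has no sections for $j\ge 2$). This contradiction forces $\Phi=0$.

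The one subtlety to handle carefully is that $\Ee$ is reflexive, not necessarily locally free, so $M=\ker\Phi$ and $\Ee/M$ need only be torsion-free; I would pass to reflexive hulls / use that a rank-one torsion-free sheaf on $\PP^n$ is $\Ii_W\otimes\Oo(c)$ for some codimension-$\ge 2$ subscheme $W$, which only makes the relevant $H^0$ smaller, so the vanishing $H^0(T_{\PP^n}(a-b-1))=0$ still kills the map. I would also note the degenerate possibility that $\Phi$ has image of rank two is excluded since then $\ker\Phi$ is torsion and $\Ee$ torsion-free forces $\ker\Phi=0$, contradicting $\Phi^2=0$ with $\Phi\ne0$; and the possibility $\mathrm{Im}(\Phi)$ of rank zero means $\Phi=0$.

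The main obstacle, such as it is, is bookkeeping the rank-one torsion-free subsheaf/quotient structure of $\Phi$ on a reflexive (possibly non-locally-free) $\Ee$ and making sure the slope inequality from stability is applied to the correct saturated subsheaf; once that is set up, everything reduces to the elementary vanishing $H^0(\PP^n,T_{\PP^n}(-j))=0$ for $j\ge 2$, exactly as in the proof of Proposition \ref{ttts}, so I expect the argument to be short.
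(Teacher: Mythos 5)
Your argument is correct and is essentially the argument the paper itself invokes (it simply cites the proof of Proposition \ref{ttts}, i.e.\ of \cite[Theorem 1.1]{BH}, applied with $T_{\PP^n}(-1)$ in place of $T_{\PP^n}(-\log\Dd)$): reduce a rank-two nilpotent $\Phi$ to $\Phi^2=0$, extract the rank-one kernel $\Oo_{\PP^n}(a)$ and torsion-free quotient of degree $b$, use stability to get $a-b\le -1$, and kill the induced map by $H^0(T_{\PP^n}(a-b-1))\subseteq H^0(T_{\PP^n}(-2))=0$. Your care with the reflexive (non-locally-free) case and the degenerate ranks of $\ker\Phi$ is exactly the right bookkeeping, so nothing is missing.
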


%%%%%%%%%%%%%%%%%%%%%%%%%%
\subsection{Quadric surface}
Let $X=\PP^1 \times \PP^1$ and take $D\in |\Oo _X(1,0)|$; by symmetry the case $D\in |\Oo _X(0,1)|$ is similar. We have $T_X(-D)\cong \Oo _X(1,0)\oplus \Oo _X(-1,2)$.

\quad (a) In case $\det (\Ee )\cong \Oo _X$ we prove the existence for $c_2\ge 0$. By taking$r=d=r'=d' =0$, we obtain a $2$-nilpotent co-Higgs structure induced by $\Ii _Z \rightarrow T_X(-D )$, i.e. by a non-zero section of $T_X(-D)$. This construction gives $(\Ee ,\Phi)$ with $\Ee$ strictly semistable for any polarization.

\quad (b) In case $\det (\Ee )\cong \Oo _X(-1,0)$ we also see the existence for $c_2\ge 0$. Since $h^0(T_X(-D)(-1,0)) >0$, we take $(r,r',d,d')=(-1,0,0,0)$ and $\Phi$ induced by a non-zero map $\Ii _Z \rightarrow T_X(-D)(-1,0)$. Then $\Ee$ is stable for every polarization, unless $Z=\emptyset$ and $\Ee$ splits, because $Z\ne \emptyset$ would imply $h^0(\Ee)=0$; even when $Z=\emptyset$ and so $\Ee \cong \Oo _X\oplus \Oo _X(-1,0)$, the pair $(\Ee ,\Phi)$ is stable for every polarization.

\quad (c) Assume $\det (\Ee)\cong \Oo _X(-1,-1)$ and take $(r,d)=(-1,0)$ and $(r', d')=(0,-1)$ with $D\in |\Oo _X(1,0)|$. Note that $h^0(T_X(-D )(-1,1)) >0$ and $c_2(\Ee ) =\deg (Z)+1$. Then we get that $\Ee$ is semistable with respect to $\Oo _X(1,1)$.

\begin{remark}
\begin{enumerate}
\item It is likely that we may not apply our method of construction of $2$-nilpotent co-Higgs structure to the case when $\det (\Ee) \cong \Oo_X(0,-1)$, because it requires a non-zero section in $h^0(T_X(-D)(-1,0))$, which is trivial. 
\item Take $\Dd= L\cup R$ with $L, R\in  |\Oo _X(1,0)|$ and $L\ne R$; the case with $L,R\in |\Oo _X(0,1)|$ is similar. Then the existence for the case $c_1(\Ee)=\Oo_X(0,0)$ can be done for any $c_2\ge 0$ as above.
\end{enumerate}
\end{remark}

%%%%%%%%%%%%%%%%%%%%%%%%%%%

\section{Extension of co-Higgs bundles}\label{exttt}

Fix an ample line bundle $\Hh$ on $X$ and a vector bundle $\Gg$. Then we may define $\Hh$-(semi)stability for a pair $(\Ee, \Phi)$ with $\Ee$ a torsion-free sheaf and $\Phi: \Ee \rightarrow \Ee \otimes \Gg$, similarly as in Definition \ref{ss1} with $\Gg$ instead of $T_X(-\log \Dd)$. Then the definition of (logarithmic) co-Higgs bundle is obtained by taking $\Gg\in \{T_X,T_X(-\log \Dd ), T_X(-D)\}$ with the integrability condition $\Phi \wedge\Phi =0$. Note that it is enough to check the integrability condition on a non-empty open subset $U$ of $X$. 

\begin{definition}
Fix an effective divisor $D\subset X$ and a positive integer $k$, for which we take $\Gg := T_X(kD)$. A pair $(\Ee, \Phi)$ is called a {\it meromorphic co-Higgs sheaf} with poles of order at most $k$ contained in $\Dd$, if it satisfies the integrability condition on $U:=X \setminus D$. 
\end{definition}

Via the inclusion $T_X\hookrightarrow T_X(kD)$ induced by a section of $\Oo _X(kD)$ with $kD$ as its zeros, we see that any co-Higgs sheaf is also a meromorphic co-Higgs for any $k$ and $D$. A meromorphic co-Higgs sheaf with poles contained in $D$ induces an ordinary co-Higgs sheaf $(\Ff ,\phi)$ on the non-compact manifold $U$ and our definition of meromorphic co-Higgs sheaves captures the extension of $(\Ff ,\phi)$ to $X$ with at most poles on $D$ of order at most $k$. 

\begin{remark}
We may generalize the definition of a meromorphic co-Higgs sheaf as follows: take $D=\cup _{i=1}^{s} D_i$ with each $D_i$ irreducible and consider $\sum _{i=1}^{s} k_iD_i$, $k_i$ a positive integer, instead of $kD$. Then we get the co-Higgs sheaves $(\Ff ,\phi)$ on $X\setminus D$, which extends meromorphically to $X$ with poles of order at most $k_i$ on each $D_i$.
\end{remark}

Our method used in constructing $2$-nilpotent co-Higgs sheaves (see \cite[Condition 2.2]{BH}) can be applied to construct $2$-nilpotent meromorphic co-Higgs sheaves, if $h^0(T_X(kD))>0$; we may easily check when the construction gives locally free ones. In the set-up of Sections \ref{quad} and \ref{divisor} we immediately see how to construct examples filling in several Chern classes. 

Assume that $\dim (X)=1$ and let $D = p_1+\cdots +p_s$ be $s$ distinct points on $X$. Set $\ell:= \deg (\sum _{i=1}^{s} k_ip_i)$ and $r:= \mathrm{rank}(\Ee)$. We adapt the proof of \cite[Theorem 6.1]{R2} with only very minor modifications to prove the following result. To cover the case needed in Example \ref{bbb+1} we allow as $\ell$ an integer at least $-1$.

\begin{proposition}\label{bbb+2}
Let $\Ee \cong \Oo_{\PP^1}(a_1)\oplus \cdots \oplus \Oo_{\PP^1}(a_r)$ be a vector bundle of rank $r\ge 2$ on $\PP^1$ with $a_1\ge \cdots \ge a_r$.
\begin{itemize}
\item [(i)] If $(\Ee, \Phi)$ is semistable with a map $\Phi : \Ee \rightarrow \Ee (2+\ell )$, then we have $a_{i+1} \ge a_i -\ell -2$ for each $i \le r-1$.
\item [(ii)] Conversely, if $a_{i+1} \ge a_i -\ell -2$ for each $i \le r-1$, then there is a map $\Phi : \Ee \rightarrow \Ee (2+\ell )$ such that no proper subbundle $\Ff \subset \Ee$ satisfies $\Phi (\Ff )\subseteq \Ff (2+\ell)$, and in particular $(\Ee ,\Phi)$ is stable. The set of all such $\Phi$ is non-empty open subset of the vector space $H^0(\mathcal{E}nd (\Ee )(2+\ell ))$.
\end{itemize}
\end{proposition}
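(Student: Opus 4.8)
The plan is to analyze maps $\Phi\in H^0(\mathcal{E}nd(\Ee)(2+\ell))$ block by block according to the splitting $\Ee\cong\bigoplus_i\Oo_{\PP^1}(a_i)$. Writing $\Phi=(\phi_{ij})$ with $\phi_{ij}\in H^0(\Oo_{\PP^1}(a_i-a_j+2+\ell))$ being the component $\Oo_{\PP^1}(a_j)\to\Oo_{\PP^1}(a_i)(2+\ell)$, the key observation is that $\phi_{ij}=0$ automatically whenever $a_i-a_j+2+\ell<0$, i.e. whenever $a_j>a_i+\ell+2$.

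For part (i), suppose some $i\le r-1$ has $a_{i+1}<a_i-\ell-2$. Then for every pair $(j,k)$ with $j\ge i+1$ and $k\le i$ we have $a_k\ge a_i>a_{i+1}+\ell+2\ge a_j+\ell+2$, hence $\phi_{kj}=0$; this is exactly the statement that $\Phi$ carries the subbundle $\Ff:=\bigoplus_{j\le i}\Oo_{\PP^1}(a_j)$ into $\Ff(2+\ell)$. Since $a_1\ge\cdots\ge a_r$, the subbundle $\Ff$ has slope $\mu(\Ff)=\frac{1}{i}\sum_{j\le i}a_j\ge a_i>a_{i+1}\ge\mu(\Ee/\Ff)$, so in particular $\mu(\Ff)>\mu(\Ee)$, contradicting the semistability of $(\Ee,\Phi)$. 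This disposes of (i).

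For part (ii), assume $a_{i+1}\ge a_i-\ell-2$ for all $i$, so that consecutive-diagonal entries $\phi_{i+1,i}\in H^0(\Oo_{\PP^1}(a_i-a_{i+1}+2+\ell))$ with $a_i-a_{i+1}+2+\ell\ge0$ are allowed to be nonzero; in fact one has $a_i-a_j+2+\ell\ge0$ for all $j\ge i$, so the whole "lower-triangular" part of $\Phi$ is free. I would take $\Phi$ lower triangular with $\phi_{i+1,i}$ a suitably chosen (general) nonzero section and all other entries zero — essentially a single Jordan-type block, mimicking the companion-matrix construction of \cite[Theorem 6.1]{R2}. One then checks that no proper subbundle $\Ff\subsetneq\Ee$ is $\Phi$-invariant: a $\Phi$-invariant subbundle would, together with the shift structure, be forced to be a direct sum of a bottom segment of the flag, but the chosen $\phi_{i+1,i}$ push the "top" generator of any such candidate out of it; genericity of the sections makes this precise. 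Since $\Ee$ has no proper $\Phi$-invariant subbundle, in particular every such subsheaf $\Ff$ fails $\Phi(\Ff)\subseteq\Ff(2+\ell)$ vacuously in the stability inequality, so $(\Ee,\Phi)$ is stable. Finally, the condition "$\Ee$ has no proper $\Phi$-invariant subbundle" is a Zariski-open condition on $\Phi\in H^0(\mathcal{E}nd(\Ee)(2+\ell))$ (the locus where it fails is cut out by the vanishing of appropriate minors, hence closed), and we have just exhibited one point in it, so it is a non-empty open subset.

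The main obstacle I anticipate is the invariant-subbundle verification in (ii): ruling out \emph{all} $\Phi$-invariant subbundles (not just the split ones) requires care, since a priori an invariant subbundle need not be a subsum of the flag. The clean way is to order things by the filtration $\Oo_{\PP^1}(a_r)\subset\Oo_{\PP^1}(a_{r-1})\oplus\Oo_{\PP^1}(a_r)\subset\cdots$ and argue inductively that the chosen generic $\phi_{i+1,i}$ forces any nonzero invariant subbundle containing a section "of degree $a_i$" to contain one "of degree $a_{i+1}$", eventually forcing it to be all of $\Ee$; this is precisely where one imports the minor modification of \cite[Theorem 6.1]{R2} promised in the text, the only new feature being the bookkeeping of the twist $2+\ell$ in the degrees of the $\phi_{ij}$.
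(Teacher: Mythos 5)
Your part (i) is correct and is essentially the paper's argument: the degree count $a_j-a_k+2+\ell<0$ for $k\le i<j$ forces every component from the top block $\Ff=\oplus_{j\le i}\Oo_{\PP^1}(a_j)$ to the bottom block to vanish, so $\Ff$ is automatically $\Phi$-invariant and destabilizes (your indices $\phi_{kj}$ versus $\phi_{jk}$ are transposed, but this is harmless).

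Part (ii) has a genuine gap. A $\Phi$ whose only nonzero entries are the consecutive components $\phi_{i+1,i}:\Oo_{\PP^1}(a_i)\rightarrow\Oo_{\PP^1}(a_{i+1})(2+\ell)$ is a nilpotent single Jordan block shifting strictly down the flag, and such a $\Phi$ \emph{always} admits proper invariant subbundles, no matter how generic the sections are: for every $k\ge 2$ the bottom segment $\Gg_k:=\oplus_{j\ge k}\Oo_{\PP^1}(a_j)$ satisfies $\Phi(\Gg_k)\subseteq\Gg_{k+1}(2+\ell)\subseteq\Gg_k(2+\ell)$ (indeed $\Phi(\Ee)\subseteq\Gg_2(2+\ell)$). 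So the assertion that no proper subbundle is $\Phi$-invariant is false for every $\Phi$ of your shape, and stability itself already fails when some $\Gg_k$ has slope $\mu(\Ee)$, e.g.\ when $a_1=\cdots=a_r$, where your $\Phi$ gives only a strictly semistable pair. Your proposed inductive repair only propagates downward (an invariant subbundle meeting the $\Oo(a_i)$-component must meet the $\Oo(a_{i+1})$-component), so it can never exclude subbundles concentrated at the bottom of the flag; genericity of the $\phi_{i+1,i}$ cannot help because the invariance of the $\Gg_k$ is automatic from the shape of the matrix. The paper's construction differs in exactly one entry, and it is the essential one: in addition to the unit entries on the off-diagonal it places a degree-one form $z$ in the corner coupling the bottom of the flag back to the top, turning $B$ into a companion-type matrix with characteristic polynomial $\det(tI-B)=t^r+(-1)^{r-1}z$, which is irreducible in $\CC[z,t]$. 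Irreducibility over $\CC(z)$ is what rules out \emph{all} invariant subbundles at once, since an invariant subbundle of rank $s<r$ would produce a degree-$s$ factor of the characteristic polynomial; this replaces the case analysis you were attempting. To repair your argument you must add such a wrap-around entry (which exists because $a_1\ge a_r$ makes the relevant $\Hom$ nonzero) and then invoke the irreducibility of the characteristic polynomial.
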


\begin{proof}
Assume the existence of an integer $i$ such that $a_{i+1} \le a_i -\ell -3$ and take $\Phi : \Ee \rightarrow \Ee (2+\ell )$. Set $\Ee =\Ff \oplus \Gg$ with $\Ff := \oplus _{j=1}^{i} \Oo _{\PP^1}(a_j)$ and $\Gg:= \oplus _{j=i+1}^r \Oo _{\PP^1}(a_j)$. Since any map $\Ff \rightarrow \Gg (2+\ell)$ is the zero map, we have $\Phi (\Ff )\subseteq \Ff (2+\ell)$ and so $(\Ee ,\Phi)$ is not semistable.

Now assume $a_{i+1}\ge a_i-\ell -2$ for all $i$. Write $\Phi$ as an $(r\times r)$-matrix $B$ with entries $b_{i,j}\in \mathrm{Hom}(\Oo _{\PP^1}(a_i),\Oo _{\PP^1}(a_j+2+\ell ))$. For fixed homogeneous coordinates $z_0, z_1$ on $\PP^1$ with $\infty = [1:0]$ and $0 =[0:1]$, see a homogeneous polynomial of degree $d$ in the variables $z_0,z_1$ as a polynomial of degree at most $d$ in the variable $z:= z_0/z_1$. Take 
$$B=\begin{bmatrix} 0&1&0&\cdots &z\\
0&0&1&\cdots &0\\
\vdots & \vdots & \ddots & \ddots&\vdots \\
0&0&\cdots&\cdots&1\\
0&0&\cdots&\cdots &0\end{bmatrix}$$
so that $b_{i,j} =0$ unless either $(i,j)=(1,r)$ or $j=i+1$; we take $b_{i,i+1} =1$ for all $i$, i.e. the elements of $\CC[z]$ associated to $z_1^{a_{i+1}-a_i+2+\ell}$, and $b_{1,r} =z$, the element of $\CC[z]$ associated to $z_0z_1^{a_r-a_1+1+\ell}$. Then there is no proper subbundle $\Ff\subset \Ee$ with $\Phi (\Ff )\subseteq \Ff (2+\ell)$, because the characteristic polynomial of $B$ is $\det (tI-B)= (-1)^{r-1}z+t^r$, which is irreducible in $\CC[z,t]$.
\end{proof}

\begin{remark}
Assume the genus $g$ of $X$ is at least $2$ and that $2-2g+\ell <0$. Then there exists no semistable meromorphic co-Higgs bundle $(\Ee ,\Phi)$ with $\Phi \ne 0$. Indeed, for any pair $(\Ee, \Phi)$, the map $\Phi$ would be a non-zero map between two semistable vector bundles with the target having lower slope.
\end{remark}

%%%%%%%%%%%%%%%%%%%%%%%%%%%%%

\section{Moduli over projective plane}\label{mex}

Let $X=\PP^n$ and fix $\Dd=\{D\}$ with $D\in |\Oo_{\PP^n}(1)|$. Then we have $T_{\PP^n}(-\log \Dd) \cong \Oo_{\PP^n}(1)^{\oplus n}$ and 
$$\Phi=(\phi_1, \ldots, \phi_n): \Ee \rightarrow \Ee \otimes T_{\PP^n}(-\log \Dd)$$
with $\phi_i : \Ee \rightarrow \Ee(1)$ for $i=1,\ldots, n$. Assume that $(\Ee, \Phi)$ is a semistable co-Higgs bundle of rank $r$ along $\Dd$. If $\Ee \cong \oplus_{i=1}^r \Oo_{\PP^n}(a_i)$ is a direct sum of line bundles on $\PP^n$ with $a_i \ge a_{i+1}$ for all $i$, then we get $a_i \le a_{i+1}+1$ for all $i$ by adapting the proof of \cite[Theorem 6.1]{R2}. Thus in case $\mathrm{rank}(\Ee)=r=2$, by a twist we fall into two cases: $\Oo_{\PP^n}^{\oplus 2}$ or $\Oo_{\PP^n}\oplus \Oo_{\PP^n}(-1)$. 

We denote by $\mathcal{E}nd_0(\Ee)$ the kernel of the trace map $\mathcal{E}nd(\Ee) \rightarrow \Oo_X$, the trace-free part, and then we have 
$$\mathcal{E}nd(\Ee)\otimes T_X(-\log \Dd) \cong (\mathcal{E}nd_0(\Ee)\otimes T_X(-\log \Dd)) \oplus T_X(-\log \Dd).$$
Thus any co-Higgs field $\Phi$ can be decomposed into $\Phi_1+ \Phi_2$ with $\Phi_1 \in H^0(\mathcal{E}nd_0(\Ee)\otimes T_X(-\log \Dd))$ and $\Phi_2\in H^0(T_X(-\log \Dd))$. Note that $(\Ee, \Phi)$ is (semi)stable if and only if $(\Ee, \Phi_1)$ is (semi)stable. Thus we may pay attention only to trace-free logarithmic co-Higgs bundles. Let us denote by $\mathbf{M}_{\Dd}(c_1, c_2)$ the moduli of semistable trace-free $\Dd$-logarithmic co-Higgs bundles of rank two on $\PP^2$ with Chern classes $(c_1, c_2)$. In case $\Dd=\emptyset$ we simply denote the moduli space by $\mathbf{M}(c_1, c_2)$. 
 
\begin{proposition}\label{de1}
$\mathbf{M}_{\Dd}(-1,0)$ is isomorphic to the total space of $\Oo_{D}(-2)^{\oplus 6}$. 
\end{proposition}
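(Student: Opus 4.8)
The plan is to analyze which rank-two bundles $\Ee$ on $\PP^2$ with $c_1(\Ee) = -1$, $c_2(\Ee) = 0$ can carry a semistable trace-free $\Dd$-logarithmic co-Higgs structure, and then to parametrize the possible fields. First I would recall that a rank-two bundle on $\PP^2$ with those Chern classes and no destabilizing sub-line-bundle is, after the reduction to the split case discussed just above the statement, forced to be $\Ee \cong \Oo_{\PP^2}\oplus \Oo_{\PP^2}(-1)$ (the bundle is not stable since $c_1$ is odd but $c_2 < $ the Bogomolov bound, so by the classification of rank-two bundles with small $c_2$ it splits; alternatively one invokes that a non-split extension would have $c_2 > 0$). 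So $\mathbf{M}_\Dd(-1,0)$ consists of pairs $(\Ee,\Phi)$ with this fixed $\Ee$ and $\Phi$ a semistable trace-free logarithmic co-Higgs field, modulo automorphisms of $\Ee$.

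Next I would compute the relevant cohomology. With $T_{\PP^2}(-\log\Dd)\cong\Oo_{\PP^2}(1)^{\oplus 2}$ and $\Ee\cong\Oo\oplus\Oo(-1)$, one has
$$\mathcal{E}nd_0(\Ee)\otimes T_{\PP^2}(-\log\Dd) \cong \big(\Oo(1)\oplus\Oo(1)\oplus\Oo(2)\oplus\Oo\big)^{\oplus 2}$$
after writing $\mathcal{E}nd_0(\Ee)$ in terms of the three off-diagonal-plus-diagonal summands $\Oo$, $\Oo(1)$, $\Oo(-1)$. Taking $H^0$ gives a space of dimension $2(3+3+6+1)=26$; but the destabilizing condition will cut this down. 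The key structural point is that the summand $\Hom(\Oo(-1),\Oo)\otimes\Oo(1)^{\oplus 2} = \Oo(2)^{\oplus 2}$ (the ``lower-triangular'' part, $h^0 = 12$) never interferes with semistability because the only candidate destabilizing subsheaves are twists of $\Oo$, and such a $\Phi$ automatically preserves $\Oo\subset\Ee$ unless the component of $\Phi$ in $\Hom(\Oo,\Oo(-1))\otimes\Oo(1)^{\oplus 2}=\Oo^{\oplus 2}$ (the ``upper-triangular'' part, $h^0=2$) is nonzero. Thus $(\Ee,\Phi)$ is semistable if and only if that $\Oo^{\oplus 2}$-component of $\Phi$ is nonzero; and then I would check that the integrability $\Phi\wedge\Phi=0$ is automatic here (for rank two with $T$ of the given splitting type, $\Phi\wedge\Phi$ lands in $\mathcal{E}nd(\Ee)\otimes\wedge^2 T_{\PP^2}(-\log\Dd) = \mathcal{E}nd(\Ee)\otimes\Oo(2)$, and a short computation using the nilpotent-like block structure forced by semistability shows it vanishes — this is the one calculation I would actually carry out carefully). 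So the semistable locus in $H^0(\mathcal{E}nd_0(\Ee)\otimes T_{\PP^2}(-\log\Dd))$ is the complement of a linear subspace.

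Then I would quotient by $\Aut(\Ee)$. Since $\Hom(\Oo,\Oo(-1))=0$, we have $\Aut(\Ee)=\{\left(\begin{smallmatrix}\lambda & \ast\\ 0 & \mu\end{smallmatrix}\right)\}$ with $\lambda,\mu\in\CC^\ast$ and $\ast\in H^0(\Oo(1))=\CC^3$, a group of dimension $5$. Its action on $\Phi$ scales the ``upper-triangular'' $\Oo^{\oplus 2}$-component by $\mu/\lambda$ (and translates/scales the other pieces). Rigidifying by the requirement that this nonzero $\Oo^{\oplus2}$-component be a fixed nonzero vector, say $(1,0)\in\CC^2$, uses up the one-parameter scaling and the $3$-dimensional unipotent part acting on the $\Oo(1)$-summands, and the remaining $\CC^\ast$ acts on the leftover data; I would track exactly which of the $26 - 12 - 2 = 12$ ``middle'' coordinates survive — these should be the $12 = 2\cdot 6$ dimensions of $\Oo_D(-2)^{\oplus 6}$, with the $\Oo_D(-2)$ appearing because the residual gauge freedom identifies $H^0(\Oo(1))$-valued data modulo $H^0(\Oo)$ along the exceptional directions, which is $H^0(\Oo_D(1)) = H^0(\Oo_{\PP^1}(1))$ but twisted. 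The cleanest route is to exhibit the natural map from the moduli space to $D\cong\PP^1$ (recording where the co-Higgs field degenerates, i.e. the base point determined by the ratio of the two components of the surviving lower-triangular part) and identify the fibre over each point of $D$ with a $6$-dimensional affine space, then check this is a rank-$6$ vector bundle and compute its twist as $\Oo_D(-2)$ via the scaling weights. \textbf{The main obstacle} I anticipate is not any single cohomology computation but the bookkeeping of the $\Aut(\Ee)$-action: getting the quotient honest (free action on the semistable locus after the natural rigidification), correctly identifying which linear combinations of the $\Oo(1)$- and $\Oo(2)$-block coordinates are gauge-invariant, and pinning down the resulting sheaf as $\Oo_D(-2)^{\oplus 6}$ rather than some other twist or a nonsplit bundle — this requires carefully following the $\CC^\ast$-weights through the whole reduction.
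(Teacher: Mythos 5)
Your overall strategy (reduce to $\Ee\cong\Oo_{\PP^2}\oplus\Oo_{\PP^2}(-1)$, then describe the co-Higgs fields modulo $\Aut(\Ee)$ as a $\CC^*$-quotient fibred over $\PP H^0\bigl(T_{\PP^2}(-\log\Dd)(-1)\bigr)\cong D$) is the paper's strategy, which after the reduction runs Rayan's Theorem 5.2 with $T_{\PP^2}(-1)$ replaced by $T_{\PP^2}(-\log\Dd)(-1)\cong\Oo_{\PP^2}^{\oplus 2}$. But two steps of your argument fail as written. The more serious one is your claim that integrability is automatic: it is not, and assuming it destroys the dimension count. Write $\Phi=(\phi_1,\phi_2)$ with $\phi_i:\Ee\to\Ee(1)$ given in block form by $a_i\in H^0(\Oo_{\PP^2}(1))$ (diagonal, trace-free), $b_i\in H^0(\Oo_{\PP^2}(2))$ and $c_i\in\CC$ (the two off-diagonal blocks). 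Then $\Phi\wedge\Phi=[\phi_1,\phi_2]\otimes e_1\wedge e_2$, and $[\phi_1,\phi_2]=0$ amounts to $a_1c_2=a_2c_1$, $b_1c_2=b_2c_1$, $a_1b_2=a_2b_1$. Semistability forces $(c_1,c_2)\ne(0,0)$, and then these equations say $(a_1,a_2)=\alpha\cdot(c_1,c_2)$ and $(b_1,b_2)=\beta\cdot(c_1,c_2)$ for a single $\alpha\in H^0(\Oo_{\PP^2}(1))$ and $\beta\in H^0(\Oo_{\PP^2}(2))$. This cuts the $20$-dimensional space $H^0(\mathcal{E}nd_0(\Ee)\otimes T_{\PP^2}(-\log\Dd))$ (note: $20$, not $26$ --- your four summands $\Oo(1)\oplus\Oo(1)\oplus\Oo(2)\oplus\Oo$ compute the full $\mathcal{E}nd$, not $\mathcal{E}nd_0$) down to an $11$-dimensional locus, and only then does dividing by the effectively $4$-dimensional group $\Aut(\Ee)/\CC^*$ (whose unipotent part absorbs $\alpha$) yield the $7$-dimensional space $H^0(\Oo_{\PP^2}(2))\times(\CC^2\setminus\{0\})\sslash\CC^*$ with weights $-2$ on $\beta$ and $+1$ on $(c_1,c_2)$, i.e.\ the total space of $\Oo_D(-2)^{\oplus 6}$. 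Your bookkeeping ``$26-12-2=12=2\cdot 6$ dimensions of $\Oo_D(-2)^{\oplus 6}$'' is inconsistent with this: the total space of a rank-$6$ bundle over $D\cong\PP^1$ is $7$-dimensional.

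The second gap is the reduction to $\Ee\cong\Oo_{\PP^2}\oplus\Oo_{\PP^2}(-1)$, which does not follow from the Chern classes alone: there exist non-split locally free extensions $0\to\Oo_{\PP^2}(t)\to\Ee\to\Ii_Z(-1-t)\to 0$ with $t\ge 1$ and $\deg(Z)=t^2+t$, and these also have $(c_1,c_2)=(-1,0)$ (the Cayley--Bacharach condition is vacuous here), so your assertion that ``a non-split extension would have $c_2>0$'' is false for $t\ge 1$. The paper rules these out using semistability of the \emph{pair}: since $\Ee$ is unstable, $\Phi$ must not preserve the destabilizing subsheaf $\Oo_{\PP^2}(t)$, hence induces a non-zero map $\Oo_{\PP^2}(t)\to\Ii_Z(-t)^{\oplus 2}$, which forces $t=0$ and $Z=\emptyset$. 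You should replace your splitting claim by this argument; the ``reduction to the split case'' from the discussion preceding the proposition only applies once $\Ee$ is already known to be a direct sum of line bundles.
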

\begin{proof}
By \cite[Lemma 3.2]{hart} $\Ee$ is not semistable for $(\Ee, \Phi) \in \mathbf{M}_{\Dd}(-1,0)$ and so we get an exact sequence $0 \rightarrow  \Oo_{\PP^2}(t) \rightarrow \Ee\rightarrow  \Ii_Z(-t-1)\rightarrow 0$ with $t\ge 0$. Here $\Phi(\Oo_{\PP^2}(t))\subset \Ii_Z(-t)$ is a non-trivial subsheaf and so we get $t=0$ and $Z=\emptyset$. Thus we get $\Ee \cong \Oo_{\PP^2}\oplus \Oo_{\PP^2}(-1)$. Then following the proof of \cite[Theorem 5.2]{Rayan} verbatim, we see that 
$$\mathbf{M}_{\Dd}(-1,0) \cong H^0(\Oo_{\PP^2}(2))\times (H^0(\Oo_{\PP^2}^{\oplus 2}) \setminus \{0\})\sslash \CC^*,$$
where $\CC^*$ acts on $H^0(\Oo_{\PP^2}(2))$ with weight $-2$ and on $H^0(\Oo_{\PP^2}^{\oplus 2})\setminus \{0\}$ with weight $1$. Thus we get that $\mathbf{M}_{\Dd}(-1,0)$ is isomorphic to the total space of $\Oo_{\PP^1}(-2)^{\oplus 2}$. Indeed, from the sequence (\ref{log1}) twisted by $-1$, we can identify $\PP H^0(\Oo_{\PP^2}^{\oplus 2})$ with $D$ and so $\mathbf{M}_{\Dd}(-1,0)$ can be obtained by restricting $\Oo_{\PP^2}(-2)^{\oplus 6}$ to $D$ as a closed subscheme of $\mathbf{M}(-1,0)$, which is isomorphic to the total space of $\Oo_{\PP^2}(-2)^{\oplus 6}$ (see \cite[Theorem 5.2]{Rayan}). 
\end{proof}

Recall in \cite[Page 1447]{Rayan} that $\mathbf{M}(0,0)$ is $8$-dimensional and non-isomorphic to $\mathbf{M}(-1,0)$, with an explicitly described open dense subset. On the contrary to Proposition \ref{de1}, we obtain two-codimensional subspace $\mathbf{M}_{\Dd}(0,0)$ of $\mathbf{M}(0,0)$.  

\begin{proposition}
$\mathbf{M}_{\Dd}(0,0)$ contains the total space of $\Oo_{\PP^5}(-2)$ with the zero section contracted to a point, as an open dense subset. 
\end{proposition}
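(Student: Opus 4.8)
The plan is to imitate the analysis that produced $\mathbf{M}_{\Dd}(-1,0)$, but now over the richer bundle $\Ee\cong\Oo_{\PP^2}^{\oplus 2}$. First I would fix $(\Ee,\Phi)\in\mathbf{M}_{\Dd}(0,0)$ and argue that the underlying bundle must be $\Oo_{\PP^2}^{\oplus 2}$. As recalled just before Proposition \ref{de1}, semistability of a rank-two $\Dd$-logarithmic co-Higgs bundle with $c_1=0$ forces, after the classification of admissible splitting types, the two alternatives $\Oo_{\PP^2}^{\oplus 2}$ and $\Oo_{\PP^2}(1)\oplus\Oo_{\PP^2}(-1)$; the latter is immediately destabilized by $\Oo_{\PP^2}(1)$ because $\phi_i(\Oo_{\PP^2}(1))\subseteq\Oo_{\PP^2}(2)\cap(\Oo_{\PP^2}(1)\oplus\Oo_{\PP^2}(-1))\otimes\Oo_{\PP^2}(1)$ cannot escape $\Oo_{\PP^2}(1)\otimes\Oo_{\PP^2}(1)$ inside the sub-line-bundle of top slope, so $(\Ee,\Phi)$ would not even be semistable. (Alternatively one invokes \cite[Lemma 3.2]{hart} exactly as in the proof of Proposition \ref{de1}, with $c_2=0$.) Thus $\Ee\cong\Oo_{\PP^2}^{\oplus 2}$ and $\det\Ee\cong\Oo_{\PP^2}$, and by the trace-free reduction already in place we may take $\Phi\in H^0(\mathcal{E}nd_0(\Ee)\otimes T_{\PP^2}(-\log\Dd))$, i.e. a triple $(\phi_1,\phi_2)$ of traceless $2\times 2$ matrices of linear forms on $\PP^2$, with the integrability condition $\phi_1\wedge\phi_2=0$.

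Next I would run the GIT description, following \cite[Theorem 5.2]{Rayan} and the proof of Proposition \ref{de1} verbatim. Since $\mathcal{E}nd_0(\Oo_{\PP^2}^{\oplus 2})\cong\sl_2$ and $T_{\PP^2}(-\log\Dd)\cong\Oo_{\PP^2}(1)^{\oplus 2}$, the space of trace-free co-Higgs fields is $\sl_2\otimes H^0(\Oo_{\PP^2}(1))^{\oplus 2}$, and the automorphisms of $\Oo_{\PP^2}^{\oplus 2}$ modulo scalars act through $\mathrm{PGL}_2$. The integrability locus $\{\Phi:\Phi\wedge\Phi=0\}$, together with the (semi)stability condition identifying which orbits are GIT-stable, should exhibit $\mathbf{M}_{\Dd}(0,0)$ as a GIT quotient. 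I expect that, just as in Proposition \ref{de1}, the relevant open dense stratum corresponds to the locus where the two matrices $\phi_1,\phi_2$ are proportional with a common primitive ``companion''-type normal form, so that after quotienting by $\mathrm{PGL}_2$ one is left with a $\CC^*$-quotient: an affine parameter space (here $H^0(\Oo_{\PP^2}(2))$, of dimension $6$, or rather its projectivization $\PP^5$) carrying weight $-2$, and a pointed parameter space contributing the fiber direction. Concretely I would aim to identify this open dense subset with $(H^0(\Oo_{\PP^2}(2))\times(V\setminus\{0\}))\sslash\CC^*$ for a suitable auxiliary representation $V$, with $\CC^*$ acting on $H^0(\Oo_{\PP^2}(2))\cong\CC^6$ with weight $-2$ and on $V\setminus\{0\}$ with weight $1$; the quotient is then the total space of $\Oo_{\PP(V)}(-2)^{\oplus 6}$, and with $\dim\PP(V)=0$ — i.e. $V\setminus\{0\}$ a single $\CC^*$-orbit — this degenerates to the total space of $\Oo_{\PP^5}(-2)$ with its zero section collapsed to a point. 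Finally I would check this stratum is open and dense in $\mathbf{M}_{\Dd}(0,0)$: openness because the conditions (semistable, $\Phi$ of generic rank, common eigenvector structure) are open, density by a dimension count showing the complement has strictly smaller dimension, exactly as in \cite{Rayan}.

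The main obstacle will be the last paragraph's bookkeeping: correctly identifying, inside the $\mathrm{PGL}_2$-quotient of the integrable locus in $\sl_2\otimes(\CC^3)^{\oplus 2}$, which normal form is generic and checking that the residual symmetry is precisely a weight-$(-2)$ versus weight-$1$ $\CC^*$-action with the $\CC^6$ of quadrics appearing with weight $-2$. In the $c_2=0$, $c_1=-1$ case of Proposition \ref{de1} the bundle was rigid ($\Oo_{\PP^2}\oplus\Oo_{\PP^2}(-1)$ has a small automorphism group, essentially $\CC^*\ltimes H^0(\Oo_{\PP^2}(1))$), whereas here $\Oo_{\PP^2}^{\oplus 2}$ has the full nonabelian $\mathrm{GL}_2$, so the quotient is genuinely a stack-theoretic GIT quotient and one must argue carefully that, on the stable locus, all the nonabelian directions are used up by the orbit and only a $\CC^*$ survives — this is where the codimension-two claim ($\dim\mathbf{M}_{\Dd}(0,0)=6$ versus $\dim\mathbf{M}(0,0)=8$) really gets pinned down, since the $T_X\twoheadrightarrow\oplus{\epsilon_i}_*\Oo_{D_i}(D_i)$ vanishing in \eqref{log1} cuts exactly the expected number of equations.
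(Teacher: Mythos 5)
Your proposal follows essentially the same route as the paper: reduce to $\Ee\cong\Oo_{\PP^2}^{\oplus 2}$ by showing the underlying bundle cannot be stable and that any destabilizing sub-line-bundle $\Oo_{\PP^2}(t)$ with $t>0$ would be $\Phi$-invariant (the paper does this via the extension $0\to\Oo_{\PP^2}(t)\to\Ee\to\Ii_Z(-t)\to 0$, which also covers the non-split case your primary ``splitting type'' argument omits but your parenthetical appeal to \cite[Lemma 3.2]{hart} as in Proposition \ref{de1} handles), and then run Rayan's GIT computation — the paper cites \cite[Theorem 5.3]{Rayan}, the $c_1=0$ analogue of the Theorem 5.2 you invoke. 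The hedged bookkeeping in your second half is precisely what that reference supplies, so the approach is the same.
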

\begin{proof}
Take $(\Ee, \Phi)\in \mathbf{M}_{\Dd}(0,0)$. From $c_2=c_1^2$, we get that $\Ee$ is not stable and so it fits into the following exact sequence
$$0\to \Oo _{\PP^2}(t)\to \Ee \to \Ii _Z(-t)\to 0$$ 
with $t\ge 0$ and $\deg (Z)=t^2$. First assume $t>0$. Since every map $\Oo _{\PP^2}(t) \rightarrow \Ii _Z(-t)\otimes T_{\PP^2}(-\log \Dd )$ is the zero-map, we get $\Phi (\Oo _{\PP^2}(t)) \subset \Oo _{\PP^2}(t)\otimes T_{\PP^2}(-\log \Dd )$, contradicting the semistability of $(\Ee, \Phi)$. Now assume $t=0$ and so we get $\Ee \cong \Oo _{\PP^2}^{\oplus 2}$. Then we follow the argument in \cite[Theorem 5.3]{Rayan} to get the assertion. 
\end{proof}

%%%%%%%%%%%%%%%%%%%%%%%%%%%

\section{Coherent system and Holomorphic triple}\label{osem}
If $\Ff\subset \Ee$ is a non-trivial subsheaf, then its saturation $\widetilde{\Ff}$ is defined to be the maximal subsheaf of $\Ee$ containing $\Ff$ with $\rk \widetilde{\Ff}=\rk \Ff$; $\widetilde{\Ff}$ is the only subsheaf of $\Ee$ containing $\Ff$ with $\Ee /\widetilde{\Ff}$ torsion-free.

\subsection{Coherent system}
Inspired by the theory of coherent systems on smooth algebraic curves in \cite{bgmn}, we consider the following definition. Let $\Ee$ be a torsion-free sheaf of rank $r\ge 2$ on $X$ and $(\Ee, \Phi)$ be a $\Dd$-logarithmic co-Higgs structure. Then we define a set 
$$\Ss=\Ss(\Ee, \Phi):=\{ (\Ff, \Gg)~|~0\subsetneq \Ff \subseteq \Gg \subseteq \Ee \text{ with }\Phi (\Ff )\subseteq \Gg \otimes T_X(-\log \Dd )\}.$$
For a fixed real number $\alpha \ge 0$ and $(\Ff, \Gg) \in \Ss$, set
\begin{align*}
\mu _\alpha (\Ff ,\Gg) &= \mu (\Ff ) + \alpha  \left (\frac{\rk \Ff }{\rk \Gg}\right)\\
\mu '_\alpha (\Ff ,\Gg) &= \mu (\Ff ) + \alpha  \left (\frac{\rk \Ff}{\rk \Ff +\rk \Gg}\right).
\end{align*}
Note that $\mu _\alpha (\Ee ,\Ee ) =\mu (\Ee) + \alpha$ and $\mu '_\alpha (\Ee ,\Ee ) =\mu (\Ee )+ \alpha /2$. From now on we use $\mu _\alpha$, but $\mu '_\alpha$ does the same job. In general, we have $\mu _\alpha (\Ff ,\Gg )\le \mu (\Ff )+\alpha$ for $(\Ff ,\Gg )\in \Ss$ and equality holds if and only if $\rk \Ff = \rk \Gg$, i.e. $\Gg$ is contained in the saturation $\widetilde{\Ff}$ of $\Ff$ in $\Ee$. 

\begin{definition}
The pair $(\Ee ,\Phi)$ is said to be $\mu _\alpha$-stable (resp. $\mu _\alpha$-semistable) if $\mu _\alpha (\Ff,\Gg ) < \mu _\alpha (\Ee ,\Ee )$ (resp. $\mu _\alpha (\Ff,\Gg ) \le \mu _\alpha (\Ee ,\Ee )$) for all $(\Ff ,\Gg )\in \Ss \setminus \{(\Ee ,\Ee )\}$. A similar definition is given with $\mu '_\alpha$. 
\end{definition}
Note that if $\Ee$ is semistable (resp. stable), then a pair $(\Ee ,\Phi)$ is $\mu_{\alpha}$-semistable (resp. $\mu_{\alpha}$-stable) for any $\alpha$ and $\Phi$. The converse also holds for $\Phi =0$.

\begin{remark}
We have $\Phi (\Ff) \subseteq \widetilde{\Gg}\otimes T_X(-\log \Dd)$ for $(\Ff, \Gg)\in \Ss$ and so to test the $\mu_{\alpha}$-(semi)stability of $(\Ee ,\Phi)$, it is sufficient to test the pairs $(\Ff ,\Gg )\in \Ss \setminus \{(\Ee ,\Ee )\}$ with $\Gg$ saturated in $\Ee$. Moreover, if $\Gg$ is saturated in $\Ee$, then $\Gg\otimes T_X(-\log \Dd)$ is saturated in $\Ee\otimes T_X(-\log \Dd)$. Since $\Phi (\Ff)$ is a subsheaf of $\Phi (\widetilde{\Ff})$ with the same rank we have $\Phi (\widetilde{\Ff})\subseteq \Gg \otimes T_X(-\log \Dd)$. So to test the $\mu_{\alpha}$-(semi)stability of $(\Ee ,\Phi)$ it is sufficient to test the pairs $(\Ff ,\Gg )\in \Ss \setminus \{(\Ee ,\Ee )\}$ with both $\Ff$ and $\Gg$ saturated in $\Ee$.
\end{remark}

\begin{lemma}\label{lem23}
If $(\Ee ,\Phi )$ is not semistable (resp. stable), then it is not $\mu _\alpha$-semistable (resp. not $\mu _\alpha$-stable) for any $\alpha$.
\end{lemma}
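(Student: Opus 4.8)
The statement to prove is Lemma \ref{lem23}: if $(\Ee,\Phi)$ is not semistable (resp. not stable) as a $\Dd$-logarithmic co-Higgs bundle, then for any $\alpha \ge 0$ it fails $\mu_\alpha$-semistability (resp. $\mu_\alpha$-stability). The plan is to take a destabilizing subsheaf $\Ff$ for the ordinary co-Higgs stability and produce from it a destabilizing pair $(\Ff',\Gg')\in\Ss$ for the $\mu_\alpha$-notion. The natural candidate is $(\widetilde\Ff,\widetilde\Ff)$, the saturation of $\Ff$ in $\Ee$, paired with itself.

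First I would set up the hypothesis correctly: $(\Ee,\Phi)$ not semistable means there is a $\Phi$-invariant subsheaf $0\subsetneq \Ff\subsetneq \Ee$ with $\Phi(\Ff)\subseteq \Ff\otimes T_X(-\log\Dd)$ and $\mu(\Ff) > \mu(\Ee)$ (strict inequality, since failure of ``$\le$'' gives ``$>$''); in the ``not stable'' case we instead have $\mu(\Ff)\ge\mu(\Ee)$ for some such proper nonzero $\Ff$. Next I would pass to the saturation $\widetilde\Ff$ of $\Ff$ in $\Ee$. Two things need checking here: that $\widetilde\Ff$ is still $\Phi$-invariant, and that the slope does not decrease. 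For invariance, one uses that $\Phi(\widetilde\Ff)$ is a subsheaf of $\Phi(\widetilde\Ff)$ whose image, having the same rank as $\Phi(\Ff)$, lands inside the saturation of $\Ff\otimes T_X(-\log\Dd)$ in $\Ee\otimes T_X(-\log\Dd)$; since $T_X(-\log\Dd)$ is locally free and $\widetilde\Ff$ is saturated, $\widetilde\Ff\otimes T_X(-\log\Dd)$ is precisely that saturation, exactly the argument already given in the Remark just before this lemma. For the slope, $\widetilde\Ff\supseteq\Ff$ with the same rank forces $\mu(\widetilde\Ff)\ge\mu(\Ff)$, because $\widetilde\Ff/\Ff$ is torsion and hence has nonnegative degree. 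So WLOG $\Ff$ is saturated.

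Now I would evaluate $\mu_\alpha$ on the pair $(\Ff,\Ff)\in\Ss$ (note $\Phi(\Ff)\subseteq\Ff\otimes T_X(-\log\Dd)$ makes this an element of $\Ss$, with $\rk\Ff=\rk\Ff$). By the formula, $\mu_\alpha(\Ff,\Ff) = \mu(\Ff) + \alpha\cdot(\rk\Ff/\rk\Ff) = \mu(\Ff)+\alpha$, while $\mu_\alpha(\Ee,\Ee) = \mu(\Ee)+\alpha$. Therefore $\mu_\alpha(\Ff,\Ff) - \mu_\alpha(\Ee,\Ee) = \mu(\Ff)-\mu(\Ee)$, which is $>0$ in the not-semistable case and $\ge 0$ in the not-stable case, for every $\alpha$; and since $\Ff\subsetneq\Ee$ we have $(\Ff,\Ff)\ne(\Ee,\Ee)$. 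This directly contradicts $\mu_\alpha$-semistability (resp. $\mu_\alpha$-stability). The same computation works verbatim for $\mu'_\alpha$, since $\mu'_\alpha(\Ff,\Ff) = \mu(\Ff)+\alpha/2$ and $\mu'_\alpha(\Ee,\Ee)=\mu(\Ee)+\alpha/2$.

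The argument is essentially a one-line slope comparison once the setup is right, so there is no serious obstacle; the only point requiring care is the reduction to a saturated destabilizing subsheaf, and that has already been handled in the Remark preceding the lemma, so I would simply invoke it. One should also double-check the edge case $\alpha=0$: there $\mu_0(\Ff,\Gg)=\mu(\Ff)$ and $\mu_0$-(semi)stability is literally ordinary (semi)stability of the co-Higgs bundle, so the claim is a tautology — but the uniform computation above covers it anyway.
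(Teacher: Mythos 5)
Your proposal is correct and follows essentially the same route as the paper: take a $\Phi$-invariant destabilizing subsheaf $\Ff$, observe that $(\Ff,\Ff)\in\Ss\setminus\{(\Ee,\Ee)\}$, and compare $\mu_\alpha(\Ff,\Ff)=\mu(\Ff)+\alpha$ with $\mu_\alpha(\Ee,\Ee)=\mu(\Ee)+\alpha$. The saturation step you include is harmless but unnecessary here, since $(\Ff,\Ff)$ already lies in $\Ss$ without saturating; the paper omits it.
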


\begin{proof}
Take $\Ff \subset \Ee$ such that $\Phi (\Ff )\subseteq \Ff \otimes T_X(-\log \Dd )$ and $\mu (\Ff )>\mu (\Ee )$ (resp. $\mu (\Ff )\ge \mu (\Ee )$). We have $(\Ff ,\Ff )\in \Ss$ and so $\mu _\alpha (\Ff ,\Ff )=\mu (\Ff )+\alpha >$ (resp. $\ge$) $\mu (\Ee )+\alpha=\mu_\alpha (\Ee, \Ee)$, proving the assertion. 
\end{proof}

\begin{remark}
Lemma \ref{lem23} shows that $\mu _\alpha$-stability is stronger than the stability of the pairs $(\Ee ,\Phi )$ in the sense of \cite{R1, R2, Rayan} and so they form a bounded family if we fix the Chern classes of $\Ee$. However, if $(\Ee ,\Phi )$ is not $\mu _\alpha$-semistable, a pair $(\Ff ,\Gg )\in \Ss$ with $\mu _\alpha (\Ff ,\Gg )>\mu (\Ee )+\alpha$ and maximal $\mu_\alpha$-slope may have $\rk (\Gg )>\rk (\Ff )$, i.e. $\Phi (\Ff )\nsubseteq \Ff\otimes T_X(-\log \Dd)$ and so we do not define the Harder-Narashiman filtration of $\mu_\alpha$-unstable pairs $(\Ee ,\Phi )$.
\end{remark}

\begin{proposition}\label{yyy}
Let $(\Ee, \Phi)$ be a $\Dd$-logarithmic co-Higgs bundle on $X$ with $\Ee$ not semistable. Then there exist two positive real numbers $\beta$ and $\gamma$ such that
\begin{itemize}
\item [(i)] $(\Ee, \Phi)$ is not $\mu_\alpha$-semistable for all $\alpha<\beta$, and
\item [(ii)] if $(\Ee, \Phi)$ is semistable in the sense of Definition \ref{ss1}, it is $\mu_\alpha$-semistable for all $\alpha>\gamma$. 
\end{itemize}
\end{proposition}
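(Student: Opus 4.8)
The plan is to analyze the behavior of the quantity $\mu_\alpha(\Ff,\Gg)$ as a function of $\alpha$ over the (finite set of relevant) pairs $(\Ff,\Gg)\in\Ss$, and to extract the thresholds $\beta$ and $\gamma$ from the two distinct roles that $\Phi$-invariant subsheaves play. First I would set up the two relevant families of subsheaves. For part (i), since $\Ee$ is not semistable, its Harder--Narasimhan filtration produces a subsheaf $\Gg_0\subset\Ee$ with $\mu(\Gg_0)>\mu(\Ee)$; I would take $\Gg_0$ to be the maximal destabilizing subsheaf, which is saturated. The key observation is that, regardless of whether $\Phi(\Gg_0)\subseteq\Gg_0\otimes T_X(-\log\Dd)$, the pair $(\Ff_0,\Gg_0)$ where $\Ff_0:=\Ker(\Gg_0\xrightarrow{\Phi}(\Ee/\Gg_0)\otimes T_X(-\log\Dd))$ — i.e., the largest subsheaf of $\Gg_0$ whose image under $\Phi$ still lands in $\Gg_0\otimes T_X(-\log\Dd)$ — lies in $\Ss$; indeed one should rather directly note that $(\Gg_0,\Gg_0)\in\Ss$ only if $\Gg_0$ is $\Phi$-invariant, so instead I use the pair $(\Ff_0,\Gg_0)$ with $\Ff_0$ a nonzero subsheaf (nonzero because any map out of $\Gg_0$ to a lower-slope quotient has nontrivial kernel when $\rk\Gg_0\ge 1$, or more carefully because $\Phi$ restricted to the top HN piece of $\Gg_0$ cannot be injective into a strictly smaller-slope target). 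Then $\mu_\alpha(\Ff_0,\Gg_0)=\mu(\Ff_0)+\alpha(\rk\Ff_0/\rk\Gg_0)$, and I want this $>\mu(\Ee)+\alpha$; this inequality is equivalent to $\mu(\Ff_0)-\mu(\Ee)>\alpha(1-\rk\Ff_0/\rk\Gg_0)$, and since $\rk\Ff_0<\rk\Gg_0$ (otherwise $\Gg_0$ itself would be $\Phi$-invariant, which is fine too but then we simply use $(\Gg_0,\Gg_0)$ and get the inequality for all $\alpha$), this holds precisely for $\alpha<\beta:=\big(\mu(\Ff_0)-\mu(\Ee)\big)\big/\big(1-\rk\Ff_0/\rk\Gg_0\big)$, which is positive provided $\mu(\Ff_0)>\mu(\Ee)$. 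The delicate point here is ensuring $\mu(\Ff_0)>\mu(\Ee)$: $\Ff_0$ is a $\Phi$-invariant subsheaf, so if $(\Ee,\Phi)$ is semistable as a co-Higgs bundle then automatically $\mu(\Ff_0)\le\mu(\Ee)$ and the argument as stated breaks — so in that case I must instead choose the pair where $\Gg$ is the HN-destabilizer and $\Ff$ is as large as possible, and observe $\mu(\Ff_0)$ can still be made $>\mu(\Ee)$ by a more careful choice, OR simply take $\Ff_0$ to be the first HN piece of $\Gg_0$ intersected appropriately; this bookkeeping is where I expect to spend real effort.

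Let me reorganize: the cleanest route for (i) is to pick $\Ff_1\subset\Ee$ a subsheaf with $\mu(\Ff_1)$ equal to the maximal HN slope $\mu_{\max}(\Ee)>\mu(\Ee)$ and $\Ff_1$ of minimal rank among such (so $\Ff_1$ is "stable-like"), and then let $\Gg_1$ be the saturation in $\Ee$ of $\Ff_1+\Phi(\Ff_1)$-generated subsheaf — no: rather, let $\Gg_1$ be any saturated subsheaf containing $\Ff_1$ with $\Phi(\Ff_1)\subseteq\Gg_1\otimes T_X(-\log\Dd)$, taking $\Gg_1$ of minimal possible rank, so in the worst case $\Gg_1=\Ee$. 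Then $(\Ff_1,\Gg_1)\in\Ss$ and $\mu_\alpha(\Ff_1,\Gg_1)=\mu(\Ff_1)+\alpha\rk\Ff_1/\rk\Gg_1$. Comparing with $\mu(\Ee)+\alpha$: the difference is $(\mu(\Ff_1)-\mu(\Ee))-\alpha(1-\rk\Ff_1/\rk\Gg_1)$, which is positive for $0<\alpha<\beta:=(\mu(\Ff_1)-\mu(\Ee))/(1-\rk\Ff_1/\rk\Gg_1)$ when $\rk\Ff_1<\rk\Gg_1$, and positive for all $\alpha$ when $\rk\Ff_1=\rk\Gg_1$ (in which case we just pick any $\beta>0$, e.g. $\beta=1$). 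This gives (i) unconditionally since $\mu(\Ff_1)>\mu(\Ee)$ by construction, with no hypothesis on $(\Ee,\Phi)$.

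For part (ii), I assume $(\Ee,\Phi)$ is semistable as a $\Dd$-logarithmic co-Higgs bundle, so every $\Phi$-invariant subsheaf $\Ff$ (i.e. with $\Phi(\Ff)\subseteq\Ff\otimes T_X(-\log\Dd)$) has $\mu(\Ff)\le\mu(\Ee)$. I need to bound $\mu_\alpha(\Ff,\Gg)$ for all $(\Ff,\Gg)\in\Ss$. By the remark preceding Lemma \ref{lem23}, it suffices to test pairs with $\Ff,\Gg$ both saturated in $\Ee$. Write $\mu_\alpha(\Ff,\Gg)=\mu(\Ff)+\alpha(\rk\Ff/\rk\Gg)$. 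If $\rk\Ff=\rk\Gg$, then $\Gg\subseteq\widetilde\Ff$, so $\Phi(\Ff)\subseteq\widetilde\Ff\otimes T_X(-\log\Dd)$ and hence (saturation argument) $\widetilde\Ff$ is $\Phi$-invariant, giving $\mu(\Ff)\le\mu(\widetilde\Ff)\le\mu(\Ee)$, so $\mu_\alpha(\Ff,\Gg)\le\mu(\Ee)+\alpha=\mu_\alpha(\Ee,\Ee)$ — fine for all $\alpha$. The problematic pairs are those with $\rk\Ff<\rk\Gg$, for which $\mu(\Ff)$ can exceed $\mu(\Ee)$. But here I use that $\mu_\alpha(\Ff,\Gg)-\mu_\alpha(\Ee,\Ee)=(\mu(\Ff)-\mu(\Ee))-\alpha(1-\rk\Ff/\rk\Gg)\le(\mu(\Ff)-\mu(\Ee))-\alpha/r$ where $r=\rk\Ee$ (since $1-\rk\Ff/\rk\Gg\ge 1/\rk\Gg\ge 1/r$). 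Now $\mu(\Ff)-\mu(\Ee)$ is bounded above over all saturated subsheaves $\Ff$ of $\Ee$ — this is the standard boundedness fact that the maximal HN slope $\mu_{\max}(\Ee)$ is finite, so $\mu(\Ff)\le\mu_{\max}(\Ee)$ for every subsheaf $\Ff$. Setting $\gamma:=r\cdot(\mu_{\max}(\Ee)-\mu(\Ee))$, for $\alpha>\gamma$ we get $\mu_\alpha(\Ff,\Gg)-\mu_\alpha(\Ee,\Ee)\le(\mu_{\max}(\Ee)-\mu(\Ee))-\alpha/r<0$ for these pairs, and $\le 0$ for the equal-rank pairs, so $(\Ee,\Phi)$ is $\mu_\alpha$-semistable. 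Since $\Ee$ is not semistable we have $\mu_{\max}(\Ee)>\mu(\Ee)$, so $\gamma>0$, as required.

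The main obstacle I anticipate is part (i) in the subcase where $(\Ee,\Phi)$ happens to be semistable in the co-Higgs sense while $\Ee$ is not semistable as a sheaf — one must be sure the pair $(\Ff_1,\Gg_1)$ realizing the violation genuinely has $\rk\Gg_1>\rk\Ff_1$ (equivalently that $\Gg_1$ strictly contains the saturation of $\Ff_1$), since if by bad luck one could always take $\Gg_1=\widetilde{\Ff_1}$ then $\widetilde{\Ff_1}$ would be a destabilizing $\Phi$-invariant subsheaf, contradicting co-Higgs semistability — so the dichotomy actually forces the good case, but spelling this out (choosing $\Ff_1$ to have maximal slope, verifying $\Phi(\Ff_1)$ cannot stay inside $\widetilde{\Ff_1}\otimes T_X(-\log\Dd)$) requires care. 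A secondary nuisance is the routine verification that if $\Gg$ is saturated and $\Phi(\widetilde\Ff)\subseteq\Gg\otimes T_X(-\log\Dd)$ with $\rk\Ff=\rk\Gg$ then $\widetilde\Ff$ is $\Phi$-invariant, which follows from $\Gg=\widetilde\Ff$ (both saturated of the same rank containing $\Ff$) as noted in the paper's own remark, so I will simply cite that remark.
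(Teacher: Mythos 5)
Your argument is correct and is essentially the paper's own proof: for (i) the paper simply takes the pair $(\Gg ,\Ee )\in \Ss$ with $\Gg$ any destabilizing subsheaf (so your worries about whether $\Gg _1$ can be taken smaller than $\Ee$, and the kernel construction $\Ff _0$ of your first paragraph, are unnecessary --- the pair with second entry $\Ee$ always lies in $\Ss$ and already gives $\beta >0$), and for (ii) the paper runs the identical estimate, using that co-Higgs semistability forces $\rk \Gg >\rk \Ff$ whenever $\mu (\Ff )>\mu (\Ee )$, with the same constant $\gamma =r(\mu _{\max}(\Ee )-\mu (\Ee ))$. There are no substantive differences.
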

\begin{proof}
Assume that $\Ee$ is not semistable and take a subsheaf $\Gg$ with $\mu (\Gg )>\mu (\Ee)$. Note that $(\Gg ,\Ee)\in \Ss$. Then there exists a real number $\beta>0$ such that $\mu _\alpha (\Gg ,\Ee )> \mu (\Ee )+\alpha =\mu _\alpha (\Ee,\Ee)$ for all $\alpha$ with $0<\alpha  <\beta$. Thus $(\Ee ,\Phi)$ is not $\mu_{\alpha}$-semistable if $\alpha < \beta$. 

Now assume that $\Ee$ is not semistable, but that $(\Ee ,\Phi )$ is semistable. Define
$$\Delta=\{\text{the saturated subsheaves }\Aa \subset \Ee ~|~\mu(\Aa)>\mu(\Ee)\}.$$
Let $\mu _{\max}(\Ee)$ be the maximum of the slopes of subsheaves of $\Ee$, which exists as a finite real number by the existence of the Harder-Narasimhan filtration of $\Ee$. Since $\Ee$ is not semistable, we have $\mu _{\max}(\Ee) > \mu (\Ee)$ and set $\gamma := r(\mu _{\max}(\Ee )-\mu (\Ee))>0$. Fix any real number $\alpha \ge  \gamma$. Now take $\Aa \in \Delta$ and set $s:=\rk \Aa$. Since $(\Ee, \Phi)$ is semistable, we get $\rk \Bb >s$. Thus we have
\begin{align*}
\mu _\alpha (\Aa,\Bb) &\le \mu (\Aa) + \alpha s/(s+1) \\
&\le \mu (\Aa) +\alpha (r-1)/r \le \mu_\alpha (\Ee, \Ee) 
\end{align*}
and so $(\Ee ,\Phi)$ is $\mu_\alpha$-semistable for all $\alpha \ge \gamma$.
\end{proof}

\begin{remark}
For $s=1,\ldots ,r-1$, let $\Delta _s$ be the set of all $\Gg \in \Delta$ with rank $s$. If $\mu (\Gg )< \mu _{\max}(\Ee )$ for all $\Gg \in \Delta _{r-1}$, we may use a lower real number instead of $\gamma$ in the proof of Proposition \ref{yyy}.
\end{remark}

\begin{example}\label{++11+}
Let $X=\PP^1$ and take $\Dd=\{p\}$ with $p$ a point. Then we have $T_{\PP^1}(-\log \Dd )\cong T_{\PP^1}(-p)\cong \Oo _{\PP^1}(1)$. Let $(\Ee, \Phi)$ be a semistable $\Dd$-logarithmic co-Higgs bundle of rank $r\ge 2$ on $\PP^1$ with $\Ee \cong \oplus _{i=1}^{r} \Oo _{\PP^1}(a_i)$ with $a_1\ge \cdots \ge a_r$ and $a_i -a_{i+1} \le 1$ for all $i=1,\dots ,r+1$ as in Example \ref{bbb+1}. We assume that $\Ee$ is not semistable, i.e. $a_r<a_1$. The value $\gamma$ in Proposition \ref{yyy} could depend on $\Phi$, although it is the same for all general $\Phi$. Up to a twist we may assume $a_1=0$. We have $\mu (\Ee) = c_1/r$ with $c_1=a_1+\cdots +a_r$. For each $s=1,\ldots ,r-1$, set $b_s = (a_1+\cdots +a_s)/s$ and define
$$\gamma _0:= \max _{1\le s \le r-1} (s+1)(b_s-c_1/r).$$
We have $\mu (\Ff )\le b_s$ for all $\Ff \in \Delta _s$ and so $\mu _\alpha (\Ff, \Gg )\le \mu _\alpha (\Ee ,\Ee)$ for all $(\Ff ,\Gg)$ with $\rk \Ff  =s$ and $\Phi (\Ff )\nsubseteq \Ff\otimes T_{\PP^1}(-\log \Dd)$. Hence $(\Ee ,\Phi )$ is $\mu_\alpha$-semistable for all $\alpha \ge \gamma _0$.
\end{example}

\begin{example}
Similarly as in Example \ref{++11+}, we take $X=\PP^1$ and $\Dd =\emptyset$. Then we have $T_{\PP^1}(-\log \Dd) \cong T_{\PP^1} \cong \Oo _{\PP^1}(2)$. We argue as in Example \ref{++11+}, except that now we only require that $a_i -a_{i+1} \le 2$ for all $i=1,\ldots ,r-1$. 
\end{example}

\begin{example}
Take $X=\PP^n$ with $n\ge 2$ and assume that $(\Ee, \Phi)$ is a semistable logarithmic co-Higgs reflexive sheaf of rank two with $\Ee$ not semistable. Up to a twist we may assume $c_1(\Ee)\in \{-1,0\}$. Set $c_1:= c_1(\Ee)$. Since $\Ee$ is not semistable, we have an exact sequence
\begin{equation}
0\to \Oo _{\PP^n}(t) \to \Ee \to \Ii _Z(c_1-t)\to 0
\end{equation}
with either $Z=\emptyset$ or $\dim (Z)=n-2$, and $t\ge 0$ and $t>0$ if $c_1(\Ee)=0$. Since $(\Ee ,\Phi )$ is semistable, there is no saturated subsheaf $\Aa \subset \Ee$ of rank one with $(\Aa ,\Aa )\in \Ss$ and $\mu (\Aa )> -1$. Note that $\mu _\alpha (\Oo _{\PP^n}(t),\Ee ) =t +\alpha /2$ and so $(\Ee ,\Phi )$ is $\mu_\alpha$-stable (resp. $\mu_\alpha$-semistable) if and only if $\alpha > 2t-c_1$ (resp. $\alpha \ge 2t-c_1$). 

Now we discuss the existence of such a pair $(\Ee ,\Phi)$. Since $(\Ee ,\Phi )$ is semistable, we should have $\Phi (\Oo _{\PP^n}(t)) \nsubseteq \Oo_{\PP^n}(t) \otimes T_{\PP^n}(-\log \Dd )$
and so there is a non-zero map $\Oo _{\PP^n}(t)\rightarrow \Ii _Z(c_1-t)\otimes T_{\PP^n}(-\log \Dd )$. Since $t>c_1-t$ and $h^0(T_{\PP^n}(-2)) =0$, we get
$t=0$ and $c_1 =-1$. Then we also get $H^0(\Ii _Z(-1)\otimes T_{\PP^n}(-\log \Dd )) \ne 0$, which gives restrictions on the choice of $\Dd$ and $Z$. Assume that $\Dd=\{D\}$ with $D\in |\Oo_{\PP^n}(1)|$ a hyperplane, so that $T_{\PP^n}(-\log \Dd )\cong \Oo _{\PP^n}(1)^{\oplus n}$. In this case we get $Z=\emptyset$ and so $\Ee \cong \Oo _{\PP^n}\oplus \Oo _{\PP^n}(-1)$. See Proposition \ref{de1} for the associated moduli space in case $n=2$. 
\end{example}

%%%%%%%%%%%%%%%%%%%%%%
\subsection{Holomorphic triple}
We may also consider a holomorphic triple of logarithmic co-Higgs bundles and define its semistability as in \cite{bgg}. 
\begin{definition}
A holomorphic triple of $\Dd$-logarithmic co-Higgs bundles is a triple $((\Ee_1,\Phi _1),(\Ee _2,\Phi _2),f)$, where each $(\Ee _i,\Phi _i)$ is a $\Dd$-logarithmic co-Higgs sheaf with each $\Ee _i$ torsion-free on $X$ and $f: \Ee _1\rightarrow \Ee _2$ is a map of sheaves such that $\Phi _2\circ f = \hat{f}\circ \Phi _1$, where $\hat{f}:\Ee _1\otimes T_X(-\log \Dd )\rightarrow \Ee _2\otimes T_X(-\log \Dd )$ is the map induced by $f$.  
\end{definition}

For any real number $\alpha \ge 0$, define the $\nu_\alpha$-slope of a triple $\Aa=((\Ee_1,\Phi) _1,(\Ee _2,\Phi _2),f)$ to be the $\nu_\alpha$-slope of the triple $(\Ee _1,\Ee _2,f)$ in the sense of \cite{bgg}, i.e.
$$ \nu _\alpha((\Ee_1,\Phi) _1,(\Ee _2,\Phi _2),f) =\frac{\deg_{\alpha}(\Aa)}{\rk \Ee_1 + \rk \Ee_2},$$
where $\deg_{\alpha}(\Aa)= \deg (\Ee_1)+\deg (\Ee_2)+\alpha \rk \Ee_1$. A holomorphic subtriple $\Bb = ((\Ff_1,\Psi _1),(\Ff _2,\Psi _2),g)$ of $\Aa = ((\Ee_1,\Phi _1),(\Ee _2,\Phi _2),f)$ is a holomorphic triple with $\Ff _i\subset \Ee _i$, $\Psi _i = \Phi _{i|\Ff _i}$ and $g = f_{|\Ff _1}$. Since $\Phi _i$ is integrable, so is $\Psi _i$.

\begin{remark}
As before, we may use the slope $\nu _\alpha$ to define the $\nu_\alpha$-(semi)stability for $\Dd$-logarithmic co-Higgs triples. If $h: \Aa \rightarrow \Bb$ is a non-zero map of $\nu_\alpha$-semistable holomorphic triples, then we have $\nu _\alpha (\Bb )\ge \mu _\alpha (\Aa)$. Moreover, if $\Aa$ is $\nu_\alpha$-stable, then either $\nu _\alpha (\Bb )>\nu _\alpha (\Aa)$ or $h$ is injective; in addition, if $\Bb$ is also $\nu_\alpha$-stable, then $h$ is an automorphism.
\end{remark}

\begin{remark}\label{rem25}
The degenerate holomorphic triple $((\Ee_1, \Phi_1), (\Ee_2, \Phi_2), 0)$ with $f=0$ is $\nu_\alpha$-semistable if and only if $\alpha=\mu (\Ee_2)-\mu(\Ee_1)$ and both $(\Ee_i, \Phi_i)$'s are semistable as in \cite[Lemma 3.5]{bg}. Moreover such triples are not $\nu_\alpha$-stable (see \cite[Corollary 3.6]{bg}). Note that if $\Phi_1=\Phi_2=0$, then we fall into the usual holomorphic triples. We also have an analogous statement for the case $r_2=\rk \Ee_2=1$ as in \cite[Lemma 3.7]{bg}. 
\end{remark}

\begin{remark}
For subtriples $\Bb$ and $\Bb'$ of $\Aa$, we may define their sum and intersection $\Bb+\Bb'$ and $\Bb \cap \Bb'$; let $\Bb =  ((\Ff_1,\Psi _1),(\Ff _2,\Psi _2),g)$ and $\Bb '=  ((\Ff '_1,\Psi '_1),(\Ff '_2,\Psi '_2),g)$. Then we may use $\Ff _i+\Ff '_i$ and $\Ff _i\cap \Ff '_i$ with the restrictions of $\Phi _i$ and $f$ to them. Now call $\widetilde{\Ff}_i$ the saturation of $\Ff _i$ in $\Ee _i$. Since $\widetilde{\Ff}_i\otimes T_X(-\log \Dd )$ is saturated in $\Ee _i\otimes T_X(-\log \Dd)$, we have $\Phi _i(\widetilde{\Ff}_i)\subseteq \widetilde{\Ff}_i\otimes T_X(-\log \Dd )$. Since $f(\Ff _1)\subseteq
\widetilde{\Ff}_2$, we have $f(\widetilde{\Ff}_1)\subseteq \widetilde{\Ff}_2$ and so we may also define the saturation $\widetilde{\Bb}$ of $\Bb$ with $\nu_{\alpha}(\widetilde{\Bb}) \ge \nu_{\alpha}(\Bb)$.
\end{remark}

Fix $\alpha \in \RR_{>0}$ and let $\Aa = ((\Ee_1,\Phi _1),(\Ee _2,\Phi _2),f)$ be a holomorphic triple. We define $\beta(\Aa)$ to be the maximum of the set of the $\nu_\alpha$-slopes of all subtriples of $\Aa$ and let
$$\BB:=\{\Bb \subseteq \Aa~|~\nu_\alpha(\Bb)=\beta(\Aa)\}.$$

\begin{lemma}\label{kk1}
The set of the $\nu_\alpha$-slopes of all subtriples of $\Aa$ is upper bounded and so $\beta(\Aa)$ exists. Moreover, the set $\BB$ has a unique maximal element 
\end{lemma}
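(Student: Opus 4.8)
The plan is to prove the two assertions of Lemma \ref{kk1} separately, following the standard pattern for Harder--Narasimhan-type arguments adapted to holomorphic triples (as in \cite{bgg}), but keeping track of the co-Higgs fields $\Phi_i$, which here come along for free since a subtriple is required to be $\Phi_i$-invariant on each factor and the field on a subsheaf is just the restriction.

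First I would establish upper-boundedness of the set of $\nu_\alpha$-slopes. A subtriple $\Bb=((\Ff_1,\Psi_1),(\Ff_2,\Psi_2),g)$ has $\nu_\alpha(\Bb)=(\deg\Ff_1+\deg\Ff_2+\alpha\,\rk\Ff_1)/(\rk\Ff_1+\rk\Ff_2)$. The ranks $(\rk\Ff_1,\rk\Ff_2)$ range over a finite set (pairs bounded by $(\rk\Ee_1,\rk\Ee_2)$), and for each fixed rank pair the degrees $\deg\Ff_i$ are bounded above: indeed $\Ff_i\subseteq\Ee_i$ is a subsheaf of fixed rank of a fixed torsion-free sheaf on the polarized $X$, and such degrees are bounded above by $\rk\Ff_i\cdot\mu_{\max}(\Ee_i)$, where $\mu_{\max}(\Ee_i)$ is finite by the existence of the Harder--Narasimhan filtration of $\Ee_i$ (invoked just as in the proof of Proposition \ref{yyy}). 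Since $\alpha\,\rk\Ff_1$ is also bounded, $\nu_\alpha(\Bb)$ is bounded above over all subtriples; the supremum $\beta(\Aa)$ is attained because the values lie in $\tfrac{1}{N!}\ZZ$ for a suitable bounded denominator $N$, making the set of achievable slopes discrete-from-above. Hence $\BB$ is non-empty.

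Next I would prove that $\BB$ has a unique maximal element. The key tool is that the sum and intersection of two subtriples are again subtriples (as recorded in the Remark preceding the lemma), together with the elementary inequality for the pair $(\deg_\alpha,\rk_{\mathrm{tot}})$: for subsheaves $\Ff,\Ff'$ of a torsion-free sheaf one has $\rk(\Ff+\Ff')+\rk(\Ff\cap\Ff')=\rk\Ff+\rk\Ff'$ and $\deg(\Ff+\Ff')+\deg(\Ff\cap\Ff')\ge\deg\Ff+\deg\Ff'$, the latter because $\Ff+\Ff'$ is a quotient of $\Ff\oplus\Ff'$ surjecting onto $(\Ff\oplus\Ff')/(\Ff\cap\Ff')$ up to torsion. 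Adding the $\alpha\,\rk$ term on the degree-1 components gives $\deg_\alpha(\Bb+\Bb')+\deg_\alpha(\Bb\cap\Bb')\ge\deg_\alpha(\Bb)+\deg_\alpha(\Bb')$. Then for $\Bb,\Bb'\in\BB$, if $\Bb\cap\Bb'$ is non-zero (it is, once one notes both contain a common nonzero subtriple or argues via the first component; I will handle the zero case by noting $\nu_\alpha$ of the zero triple is $-\infty$, which cannot occur in $\BB$ unless $\Aa$ itself is zero), the standard seesaw argument yields $\nu_\alpha(\Bb+\Bb')\ge\beta(\Aa)$, forcing $\Bb+\Bb'\in\BB$. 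Thus $\BB$ is closed under sums, and since ranks are bounded, iterating produces a unique maximal element $\Bb_{\max}$, the sum of all members of $\BB$.

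The main obstacle I anticipate is the bookkeeping for the degenerate/zero cases in the convexity step — making sure the seesaw inequality is applied only when $\Bb\cap\Bb'$ is a genuine (nonzero) subtriple, and handling the possibility that one factor $\Ff_i$ of a subtriple is zero while the other is not (so that one component is a ``triple'' with a zero sheaf). I would dispose of this by allowing $\rk=0$ components throughout and checking the rank/degree additivity identities degenerate correctly, so that the inequality $\deg_\alpha(\Bb+\Bb')+\deg_\alpha(\Bb\cap\Bb')\ge\deg_\alpha(\Bb)+\deg_\alpha(\Bb')$ holds unconditionally with the convention $\deg_\alpha(0)=0$, $\rk_{\mathrm{tot}}(0)=0$, and excluding the zero triple from $\BB$ by hypothesis that $\Aa\ne 0$. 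The co-Higgs fields themselves pose no difficulty: $\Psi_i\oplus$ or $\cap$ is automatically the restriction of $\Phi_i$, and integrability is inherited, as already observed in the text.
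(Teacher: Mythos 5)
Your argument is correct and essentially matches the paper's: upper-boundedness comes from the Harder--Narasimhan bound $\mu_{\max}(\Ee_i)$ on degrees of subsheaves together with the finitely many admissible rank pairs and the discreteness of the resulting slopes, and uniqueness comes from showing $\nu_\alpha(\Bb+\Bb')\ge\beta(\Aa)$ for $\Bb,\Bb'\in\BB$ --- the paper derives this from the seesaw for the induced map $\Bb_2\to \Aa/\Bb_1$, whose kernel is $\Bb_1\cap\Bb_2$, which is the same computation as your submodularity of $\deg_\alpha$ applied to $0\to \Bb\cap\Bb'\to \Bb\oplus\Bb'\to \Bb+\Bb'\to 0$. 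One small correction to your attainment step: since $\alpha$ is an arbitrary positive real, the achievable slopes lie not in $\tfrac{1}{N!}\ZZ$ but in a finite union of translates $\tfrac{\alpha s_1}{s_1+s_2}+\tfrac{1}{s_1+s_2}\ZZ$ over the rank pairs $(s_1,s_2)$; this union is still closed and discrete, so the supremum is attained exactly as you claim.
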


\begin{proof}
The ranks of any non-zero subsheaf of $\Ee _i$ is upper bounded by $r_i:=\rk \Ee_i$ and lower bounded by $1$. The existence of the Harder-Narasimhan filtration of $\Ee _i$ gives the existence of positive rational numbers $\gamma_i$ with denominators between $1$ and $r_i$ such that $\mu (\Ff _i)\le \gamma _i$ for all non-zero subsheaves $\Ff _i$ of $\Ee_i$. We may use the definition of $\nu_\alpha$-slope to get an upper-bound for the $\nu_\alpha$-slopes of the subtriples of $\Aa$. There are only finitely many possible $\nu_\alpha$-slopes greater than $\nu _\alpha (\Aa)$, because the ranks are upper and lower bounded and each $\deg (\Gg)$ for a subsheaf $\Gg$ of $\Ee _i$ is an integer, upper bounded by $\max \{r_1\mu (\Ee _1), r_2\mu (\Ee _2)\}$. Thus the set of the $\nu_\alpha$-slopes of all subtriples of $\Aa$ has a maximum $\beta(\Aa)$. 

If $\nu _\alpha (\Aa )=\beta(\Aa)$, then $\Aa$ itself is the maximum element of $\BB$. Now assume $\nu _\alpha (\Aa )>\delta$ and that there are $\Bb _1,\Bb _2\in \BB$ with each $\Bb _i$ maximal and $\Bb _1\ne \Bb _2$. Since $\Bb _i$ is maximal, it is saturated and so $\Aa _i:= \Aa /\Bb _i$ is a holomorphic triple for each $i$. Since $\Bb _2\ne \Bb _1$, the inclusion $\Bb _2\subset \Aa$ induces a non-zero map $u: \Bb_2\rightarrow \Aa /\Bb _1$. Since $\nu _\alpha (\mathrm{ker}(u)) \le \beta(\Aa)$ if $u$ is not injective, we have $\nu _\alpha (u(\Aa /\Bb _1)) \ge \beta(\Aa)$. Thus we get $\nu _\alpha (\Bb _1+\Bb _2) \ge \beta(\Aa)$, contradicting the maximality of $\Bb _1$ and the assumption $\Bb _2\ne \Bb _1$.
\end{proof}

Assume that $\Aa$ is not $\nu_\alpha$-semistable. By Lemma \ref{kk1} there is a subtriple $D(\Aa) =  ((\Ff_1,\Psi _1),(\Ff _2,\Psi _2),g)\in \BB$ such that every $\Gg \in \BB$ is a subtriple of $D(\Aa)$ and each $\Ff _i$ is saturated in $\Ee_i$. Note that $D(\Aa)$ is $\nu_\alpha$-semistable. Since $\Ff _i$ is saturated in $\Ee _i$ and $\Psi _i = \Phi _{i|\Ff _i}$ for each $i$, $\Phi _i$ induces a co-Higgs field $\tau _i: \Ee _i/\Ff _i\rightarrow (\Ee _i/\Ff _i)\otimes T_X(-\log \Dd )$. Since $\Phi _i$ is integrable, so is $\tau _i$. Since $g=f_{|\Ff _1}$, $f$ induces a map $f': \Ee _1/\Ff _1\rightarrow \Ee _2/\Ff _2$ such that $\Aa /D(\Aa) := ((\Ee _1/\Ff _1, \tau _1),(\Ee _2/\Ff _2,\tau _2),f')$ is a holomorphic triple. Now we may check that each subtriple of $\Aa /D(\Aa)$ has $\nu_\alpha$-slope less than $\beta(\Aa)$ and so $D(\Aa)$ defines the first step of the Harder-Narasimhan filtration of $\Aa$.The iteration of this process allows us to have the Harder-Narasimhan filtration of $\Aa$ with respect to $\nu _\alpha$.

\begin{corollary}\label{mcor}
Any holomorphic triple admits the Harder-Narasimhan filtration with respect to $\nu_\alpha$-slope. 
\end{corollary}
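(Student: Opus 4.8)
The plan is to iterate the construction carried out in the paragraph preceding the statement, organising it as an induction on the total rank $\rk\Ee_1+\rk\Ee_2$ of the triple $\Aa=((\Ee_1,\Phi_1),(\Ee_2,\Phi_2),f)$. If $\Aa$ is already $\nu_\alpha$-semistable, the filtration $0\subset\Aa$ does the job, so I would assume $\Aa$ is not $\nu_\alpha$-semistable, which forces $\beta(\Aa)>\nu_\alpha(\Aa)$. By Lemma \ref{kk1} the set $\BB$ has a unique maximal element; after replacing it by its saturation (which does not decrease the $\nu_\alpha$-slope, as already observed) I obtain $D(\Aa)=((\Ff_1,\Psi_1),(\Ff_2,\Psi_2),g)$ with each $\Ff_i$ saturated in $\Ee_i$ and $\nu_\alpha(D(\Aa))=\beta(\Aa)$; since any subtriple of $D(\Aa)$ is a subtriple of $\Aa$ and hence has $\nu_\alpha$-slope at most $\beta(\Aa)$, the triple $D(\Aa)$ is $\nu_\alpha$-semistable.

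Next I would form the quotient triple: because each $\Ff_i$ is saturated, $\Phi_i$ descends to an integrable co-Higgs field $\tau_i$ on $\Ee_i/\Ff_i$ and $f$ descends to a map $f'\colon\Ee_1/\Ff_1\to\Ee_2/\Ff_2$, so $\Aa/D(\Aa):=((\Ee_1/\Ff_1,\tau_1),(\Ee_2/\Ff_2,\tau_2),f')$ is again a holomorphic triple, of total rank strictly smaller than that of $\Aa$ because $D(\Aa)\ne0$. The crucial step is to check that every proper subtriple of $\Aa/D(\Aa)$ has $\nu_\alpha$-slope strictly below $\beta(\Aa)$: such a subtriple lifts to a subtriple $\Cc$ of $\Aa$ with $D(\Aa)\subsetneq\Cc$, and since $\deg_\alpha$ and the total rank are additive on $0\to D(\Aa)\to\Cc\to\Cc/D(\Aa)\to0$, the elementary see-saw inequality places $\nu_\alpha(\Cc)$ between $\nu_\alpha(D(\Aa))=\beta(\Aa)$ and $\nu_\alpha(\Cc/D(\Aa))$; were the latter $\ge\beta(\Aa)$ we would get $\Cc\in\BB$, hence $\Cc\subseteq D(\Aa)$ by maximality, contradicting $D(\Aa)\subsetneq\Cc$. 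In particular $\beta(\Aa/D(\Aa))<\beta(\Aa)$.

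Finally I would invoke the induction hypothesis on $\Aa/D(\Aa)$, prepend $D(\Aa)$ to its Harder-Narasimhan filtration, and observe that by construction all successive quotients are $\nu_\alpha$-semistable with strictly decreasing $\nu_\alpha$-slopes, starting from $\beta(\Aa)$; uniqueness of the filtration propagates from the uniqueness of the maximal element $D(\Aa)$ in Lemma \ref{kk1} together with the induction. The main obstacle I anticipate is the bookkeeping behind the quotient triple—checking that subtriples of $\Aa/D(\Aa)$ correspond exactly to subtriples of $\Aa$ containing $D(\Aa)$, and that the induced fields $\tau_i$ and map $f'$ stay compatible—together with the (routine) see-saw estimate; once Lemma \ref{kk1} is in hand, the rest of the argument is essentially forced.
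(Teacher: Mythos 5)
Your proposal is correct and follows essentially the same route as the paper: take the unique maximal element $D(\Aa)$ of $\BB$ from Lemma \ref{kk1}, pass to the quotient triple $\Aa/D(\Aa)$, verify via the see-saw inequality that its subtriples have $\nu_\alpha$-slope strictly below $\beta(\Aa)$, and iterate. You merely make explicit (as an induction on total rank, with the see-saw estimate spelled out) the steps the paper leaves as ``we may check.''
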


\begin{remark}\label{rem66}
Let $Z$ denote a projective completion of $T_X(-\log \Dd)$, e.g. $Z=\PP (\Oo_X\oplus T_X(-\log \Dd))$, and call $D_{\infty}:=Z\setminus T_X(-\log \Dd)$ the divisor at infinity. By \cite[Lemma 6.8]{s} a co-Higgs sheaf $(\Ee, \Phi)$ on $X$ is the same thing as a coherent sheaf $\Ee_Z$ with $\mathrm{Supp}(\Ee_Z) \cap D_{\infty}=\emptyset$. Due to \cite[Corollary 6.9]{s} we may interpret a $\nu_\alpha$-semistable holomorphic triple of logarithmic co-Higgs bundles on $X$ as a $\nu_\alpha$-semistable holomorphic triple of vector bundles on $Z$ with support not intersecting $D_{\infty}$ as in \cite{bgg}. 
\end{remark}

Based on Remark \ref{rem66}, we may consider a $\nu_\alpha$-semistable triple of $\Dd$-logarithmic co-Higgs sheaves as a $\nu_\alpha$-semistable quiver sheaf for the quiver $\xymatrix{\stackrel{1}{\circ} \ar[r]& \stackrel{2}{\circ}}$ on $Z$ with empty intersection with $D_{\infty}$. This interpretation ensures the existence of  moduli space of $\nu_\alpha$-stable triples of $\Dd$-logarithmic co-Higgs sheaves on $X$, say $\mathbf{M}_{\Dd, \alpha}(r_1, r_2, d_1, d_2)$ with $(r_i, d_i)$ a pair of rank and degree of the $i^{\mathrm{th}}$-factor of the triples; indeed we may consider Gieseker-type semistability of quiver sheaves to produce the moduli space as in \cite{Schmitt}. As noticed in \cite[Remark in page 17]{Schmitt}, the $\nu_\alpha$-stability implies the Gieseker-type stability and so $\mathbf{M}_{\Dd, \alpha}(r_1, r_2, d_1, d_2)$ can be considered as a quasi-projective subvariety of the one in \cite{Schmitt}. Now let us define 
$$\alpha_m:=\mu(\Ee_2)-\mu(\Ee_1)~,~\alpha_M:=\left( 1+ \frac{r_1+r_2}{|r_1-r_2|}\right) \left(\mu(\Ee_2)-\mu(\Ee_1)\right)$$ 
for $\Aa = ((\Ee_1,\Phi _1),(\Ee _2,\Phi _2),f)$ as in \cite{PP}. Then we have

\begin{proposition}\cite[Proposition 2.2]{bgg}
If $\alpha>\alpha_M$ with $\rk \Ee_1\ne \rk \Ee_2$ or $\alpha<\alpha_m$, then there exists no $\nu_\alpha$-semistable triple of $\Dd$-logarithmic co-Higgs sheaves.
\end{proposition}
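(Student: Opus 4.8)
The plan is to deduce this from the corresponding statement for ordinary holomorphic triples of vector bundles, namely \cite[Proposition 2.2]{bgg}, exactly as the numbering ``\cite[Proposition 2.2]{bgg}'' in the statement suggests; the only work is to reconcile the subsheaf-testing in the co-Higgs setting with the subsheaf-testing for plain holomorphic triples. First I would recall that a $\nu_\alpha$-semistable holomorphic triple of $\Dd$-logarithmic co-Higgs sheaves is in particular a holomorphic triple of the underlying sheaves $(\Ee_1,\Ee_2,f)$, since the $\nu_\alpha$-slope of the triple $\Aa$ was \emph{defined} to be the $\nu_\alpha$-slope of $(\Ee_1,\Ee_2,f)$, and the destabilizing objects for the co-Higgs triple form a \emph{subset} of those for the plain triple (one imposes in addition $\Phi$-invariance). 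So $\nu_\alpha$-semistability of the co-Higgs triple is a \emph{weaker} condition than $\nu_\alpha$-semistability of the plain triple, and one cannot simply quote \cite[Proposition 2.2]{bgg} for the underlying triple.

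The key step, then, is to show that the \emph{bounds on $\alpha$} are insensitive to this distinction: if $\alpha>\alpha_M$ (with $r_1\ne r_2$) or $\alpha<\alpha_m$, then \emph{no} triple of the underlying sheaves — whether or not it carries compatible co-Higgs fields — is $\nu_\alpha$-semistable even after restricting attention to $\Phi$-invariant subtriples. Concretely I would run the standard argument behind \cite[Proposition 2.2]{bgg} but using only the two ``universal'' subtriples that are automatically $\Phi$-invariant: the subtriple $\Aa'=((\Ee_1,\Phi_1),(\Ee_1,\Phi_1),\op{id})$ coming from $\Ee_1$ sitting in both slots via $f$ — more precisely $((\Ee_1,\Phi_1),(f(\Ee_1),\Phi_2|_{f(\Ee_1)}),f)$ together with its saturation, which is $\Phi$-invariant because the saturation of a $\Phi$-invariant subsheaf is $\Phi$-invariant (this is recorded in the Remark preceding Fix $\alpha$ in the excerpt) — and the subtriple $\Aa''=((0,0),(\Ee_2,\Phi_2),0)$ supported in the second slot. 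These are the subtriples used in the proof of \cite[Proposition 2.2]{bgg}, and crucially both are $\Phi$-invariant with no extra hypotheses, so they are legitimate test objects for $\nu_\alpha$-semistability of the \emph{co-Higgs} triple.

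Then I would carry out the inequality computation: comparing $\nu_\alpha(\Aa'')=\mu(\Ee_2)$ with $\nu_\alpha(\Aa)=\big(\deg\Ee_1+\deg\Ee_2+\alpha r_1\big)/(r_1+r_2)$ gives, upon clearing denominators, that $\nu_\alpha(\Aa'')\le\nu_\alpha(\Aa)$ forces $\alpha\ge\mu(\Ee_2)-\mu(\Ee_1)=\alpha_m$; hence if $\alpha<\alpha_m$ the subtriple $\Aa''$ destabilizes $\Aa$. Similarly, when $r_1\ne r_2$, testing the saturated image subtriple $\wt{\Aa'}$ in the first slot (of type $(r_1,r_1)$ after saturation in slot two when $f$ is generically injective, or degenerating to the type handled in \cite[Lemma 3.7]{bg} when it is not) and working out $\nu_\alpha(\wt{\Aa'})\le\nu_\alpha(\Aa)$ produces an upper bound $\alpha\le\alpha_M=\big(1+\tfrac{r_1+r_2}{|r_1-r_2|}\big)(\mu(\Ee_2)-\mu(\Ee_1))$; so $\alpha>\alpha_M$ likewise yields a destabilizing subtriple. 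The one case needing a separate remark is when $f$ is not injective, where one instead uses $((\ker f,\Phi_1|_{\ker f}),(0,0),0)$ or $(( \ker f)^{\sim},\dots)$ — again $\Phi$-invariant after saturation — to rule out $\nu_\alpha$-semistability; I expect the main (minor) obstacle to be bookkeeping these degenerate sub-cases of $f$ so that the same two numerical bounds come out, rather than any conceptual difficulty, since the arithmetic is identical to \cite{bgg} once one has checked the relevant subtriples are co-Higgs subtriples.
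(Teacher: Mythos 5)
Your proposal is correct in substance but takes a genuinely different route from the paper. The paper's proof is a two-line reduction: by Remark \ref{rem66} and \cite[Corollary 6.9]{s}, a $\Dd$-logarithmic co-Higgs triple on $X$ is the same datum as a triple of coherent sheaves on the spectral variety $Z=\PP(\Oo_X\oplus T_X(-\log\Dd))$ supported away from $D_\infty$, and under this correspondence $\Phi$-invariant subsheaves of $\Ee_i$ correspond to \emph{all} subsheaves of the spectral sheaf on $Z$; hence co-Higgs $\nu_\alpha$-semistability becomes ordinary $\nu_\alpha$-semistability of a triple on $Z$, and the only remaining point is that the proof of \cite[Proposition 2.2]{bgg}, though stated for curves, is purely numerical in ranks and degrees with respect to a fixed polarization and so applies on $Z$. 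You instead work directly on $X$ and verify by hand that the destabilizing subtriples in the argument of \cite{bgg} --- the second-slot subtriple $((0,0),(\Ee_2,\Phi_2),0)$, and $\ker f$, $f(\Ee_1)$ and their saturations --- are automatically $\Phi$-invariant because $\hat f\circ\Phi_1=\Phi_2\circ f$ and saturation preserves $\Phi$-invariance. Your opening observation is the right one and is exactly the subtlety the paper's reduction is designed to absorb: co-Higgs $\nu_\alpha$-semistability is a priori \emph{weaker} than $\nu_\alpha$-semistability of the underlying triple, so one cannot simply forget $\Phi$ and quote \cite{bgg}. What your route buys is self-containedness and an explicit identification of which subtriples destabilize; what it costs is the case bookkeeping for the $\alpha_M$ bound (injective versus non-injective $f$, saturations in the second slot), which you acknowledge but do not fully carry out. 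The paper's route handles all subtriples at once with no case analysis, at the price of invoking Simpson's correspondence and the dimension-independence of the numerical argument. Both are valid; if you complete the $\alpha_M$ computation (combining the inequalities from $(\ker f,0)$ and $(\Ee_1,\widetilde{f(\Ee_1)})$ as in \cite{bgg}), your proof stands on its own.
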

\begin{proof}
Due to \cite[Corollary 6.9]{s}, it is sufficient to check the assertion for $\nu_\alpha$-semistability for a triple of coherent sheaves on $Z$. While the proof of \cite[Proposition 2.2]{bgg} is for curves, the proof is numerical involving rank and degree with respect to a fixed ample line bundle so that it works also for $Z$. 
\end{proof}

From now on we assume that $X$ is a smooth projective curve of genus $g$ and let $\Dd=\{p_1, \ldots, p_m\}$ be a set of $m$ distinct points on $X$. Take $g\in \{0,1\}$ and assume that $T_X(-\log \Dd) \cong \Oo_X$, i.e. $(g,m)\in \{(0,2), (1,0)\}$. For any triple $\Aa= ((\Ee _1,\Phi _1),(\Ee _2,\Phi _2),f)$ and $c\in \CC$, set 
\begin{equation}\label{fam}
\Aa _c:= ((\Ee _1,\Phi _1 -c {\cdot}\mathrm{Id}_{\Ee_1}),(\Ee _2,\Phi _2-c{\cdot}\mathrm{Id}_{\Ee_2}),f)
\end{equation}
and then $\Aa_c$ is also a triple. In particular, if $\Ee _1\cong \Ee _2$ and $f \cong c {\cdot}\mathrm{Id}_{\Ee_1}$, then the study of the $\nu _\alpha$-(semi)stability of $\Aa$ is reduced to the known case $f=0$. 

\begin{remark}\label{uu1}
Assume that $f$ is not injective. Since $\hat{f}\circ \Phi _1 =\Phi _2\circ f$, we have $\Phi _1(\mathrm{ker}(f)) \subseteq \mathrm{ker}(\hat{f})$ and $\Bb:= ((\mathrm{ker}(f),\Phi _{1|\mathrm{ker}(f)}),(0,0),0)$ is a subtriple of $\Aa$. Set $\rho := \rk (\mathrm{ker}(f))$ and $\delta:= \deg (\mathrm{ker}(f))$. If we have 
$$\nu _\alpha (\Bb ) = \delta /\rho + \alpha> \frac{r_1\alpha +d_1+d_2}{r_1+r_2},$$
then $\Aa$ would not be $\nu_\alpha$-semistable.
\end{remark}

\begin{remark}\label{xxx1}
For any triple $\Aa =((\Ee _1,\Phi _1),(\Ee _2,\Phi _2),f)$, we get a dual triple $\Aa ^\vee = ((\Ee _2^\vee ,\Phi _2^\vee), (\Ee _1^\vee ,\Phi _1^\vee ),f^\vee)$, where $\Phi_i^\vee$ and $f^\vee$ are the transpose of $\Phi_i$ and $f$, respectively. Then $\Aa $ is $\nu _\alpha$-(semi)stable if and only if $\Aa ^\vee$ is $\nu _\alpha$-(semi)stable (see \cite[Proposition 3.16]{bg}).
\end{remark}

\begin{remark}\label{xxx2}
Assume $(g,m)=(1,0)$ and take a triple $\Aa=((\Ee_1, \Phi_1), (\Ee_2, \Phi_2), f)$ with each $\Ee _i$ simple. By Atiyah's classification of vector bundles on elliptic curves, the simpleness of $\Ee_i$ is equivalent to its stability and also equivalent to its indecomposability and with degree and rank coprime. Then each $\Phi _i$ is the multiplication by a constant, say $c_i$. We get that the two triples $\Aa$ and $((\Ee_1, 0), (\Ee_2, 0), f)$ share the same subtriples and so these two triples are $\nu _\alpha$-(semi)stable for the same $\alpha$ simultaneously. There is a good description of this case in \cite[Section 7]{PP0}.
\end{remark}

Now we suggest some general description on $\nu_\alpha$-(semi)stable triples on $X$ in case of $r_1=r_2=2$ from $(a)\sim (c)$ below; we exclude the case described in Remark
\ref{xxx2} and silently use Remark \ref{xxx1} to get a shorter list.  In some case we stop after reducing to a case with $f$ not injective, i.e. to a case in which $\Aa$ is not $\nu _\alpha$-semistable for $\alpha \gg 0$ (see Remark \ref{uu1}).

\quad (a) Assume $r_1=r_2=2$ and that at least one of $\Ee _i$ is not semistable, say $\Ee_1$. Then, due to Segre-Grothendieck theorem and Atiyah's classification of vector bundles on elliptic curves, we have $\Ee _1 \cong \Ll _1\oplus \Rr _1$ with $\deg (\Ll _1)>\deg (\Rr _1)$ and $\Ee _2 \cong \Ll _2\oplus \Rr _2$ with $\deg (\Ll _2)\ge \deg (\Rr _2)$, or $g=1$ and $\Ee _2$ is a non-zero extension of the line bundle $\Ll _2$ by itself; in the latter case we put $\Rr _2:= \Ll _2$. If $\Ee _2$ is indecomposable, then it has a unique line bundle isomorphic to $\Ll _2$ and so $\Phi _2(\Ll _2)\subseteq \Ll _2$. We have  
$$\nu _\alpha (\Aa ) =\alpha /2 + (\deg (\Ll _1)+\deg (\Ll _2)+\deg (\Rr _1)+\deg (\Rr _2))/4.$$
The map $\Phi _i: \Ee _i\rightarrow \Ee _i$ induces a map $\Phi _{i|\Ll _i}: \Ll _i\rightarrow \Ll _i$, which is induced by the multiplication by a constant, say $c_i$. Then we get two triples $\Aa _{c_i}$ for $i=1,2$. Since $\Aa _{c_2}$ is a triple, we get $f(\Ll _1)\subseteq \Ll _2$ and so we may define a subtriple $\Aa_1:= ((\Ll _1,\Phi _{1|\Ll _1}),(\Ll _2,\Phi _{2|\Ll _2}),f_{|\Ll _1})$ with
\begin{align*}
\nu _\alpha (\Aa _1)& = \alpha +(\deg (\Ll_1) +\deg (\Ll _2))/2 \\
&> \alpha /2 + (\deg (\Ll _1)+\deg (\Ll _2)+\deg (\Rr _1)+\deg (\Rr _2))/4=\nu_\alpha (\Aa),
\end{align*}
which implies that $\Aa$ is not $\nu _\alpha$-semistable.

 \quad (b) Form now on we assume that $\Ee _1$ and $\Ee _2$ are semistable. We also assume that $f$ is non-zero so that $\mu (\Ee _1)\le \mu (\Ee _2)$. We are in a case with $r_1=r_2=2$ and we look at a proper subtriple $\Bb = ((\Ff _1,\Phi _{1| \Ff _1}), (\Ff _2,\Phi _{2| \Ff _2}),f_{|\Ff _1})$ with maximal $\nu _\alpha (\Bb )$. In particular, each $\Ff _i$ is saturated in $\Ee _i$, i.e. either $\Ff _i =\Ee _i$ or $\Ff _i=0$ or $\Ee _i/\Ff _i$ is a line bundle. Set $s_i:= \rk (\Ff _i)$ and then we have $1\le s_1+s_2\le 3$. If $s_2=2$, i.e. $\Ff _2 = \Ee _2$, then we have $\nu _\alpha (\Bb )< \nu _\alpha (\Aa)$ for all $\alpha >0$, because $\Ee _1$ is semistable and $\mu (\Ee _1) \le \mu (\Ee _2)$. If $s_2=0$, then $f$ is not injective. If $s_1=0$ we just exclude the case $\alpha \le \alpha _m$ with subtriple $((0,0),(\Ee _2,\Phi _2),0)$. In the case $s_1=s_2=1$ we know that $\nu _\alpha (\Bb )\le \nu _\alpha (\Aa)$ and that equality holds if and only if both $\Ee _1$ and $\Ee _2$ are strictly semistable and each $\Ff _i$ is a line subbundle of $\Ee _i$ with maximal degree. Note that the injectivity of $f$ implies $s_1\le s_2$. Thus when $f$ is injective, it is sufficient to test the case $s_1=s_2=1$. Then we have the following, when $f$ is injective. 
 \begin{itemize}
\item If $\alpha >\a_m$ and at least one of $\Ee_i$'s is stable, then $\Aa$ is $\nu_\alpha$-stable 
\item If $\alpha \ge \a_m$ and $\Ee _1$ and $\Ee _2$ are semistable, then $\Aa$ is $\nu _\alpha$-semistable. 
\item If $\alpha > \a_m$ and $\Ee _1$ and $\Ee _2$ are strictly semistable, then $\Aa$ is strictly $\nu _\alpha$-semistable if and only if there are maximal degree line bundles $\Ll _i\subset \Ee _i$ such that $\Phi _i(\Ll _i)\subseteq \Ll _i$ for each $i$ and $f(\Ll _1)\subseteq \Ll _2$.
\end{itemize}

\begin{lemma}\label{uu22}
For a general map $f: \Ee_1 \rightarrow \Ee_2$ with $\Ee_i:=\Oo_{\PP^1}(a_i)^{\oplus 2}$ and $a_2\ge a_1+2$, there exists no subsheaf $\Oo _{\PP^1}(a_1) \subset \Ee _1$ such that the saturation of its image in $\Ee _2$ is a line bundle isomorphic to $\Oo _{\PP^1}(a_2)$.
\end{lemma}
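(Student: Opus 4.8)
The plan is to make the relevant subsheaves completely explicit, thereby reducing the claim to a one-line dimension count on an incidence variety.

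Since $\Ee_1\cong\Oo_{\PP^1}(a_1)^{\oplus 2}$ is balanced, hence semistable of slope $a_1$, any rank-one subsheaf of $\Ee_1$ has degree at most $a_1$; so a subsheaf isomorphic to $\Oo_{\PP^1}(a_1)$ has degree exactly $a_1$, is saturated, and is therefore a line subbundle. These subbundles are precisely the images of the maps $\iota_{[s:t]}\colon\Oo_{\PP^1}(a_1)\to\Oo_{\PP^1}(a_1)^{\oplus 2}$, $u\mapsto(su,tu)$, parametrised by $[s:t]\in\PP^1$. In the same way, a line subbundle of $\Ee_2\cong\Oo_{\PP^1}(a_2)^{\oplus 2}$ is isomorphic to $\Oo_{\PP^1}(a_2)$ exactly when it is one of the constant subbundles $\Oo_{\PP^1}(a_2)\cdot(\lambda,\mu)$ with $(\lambda,\mu)\in\CC^2\setminus\{0\}$. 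Writing $f$ as a $2\times 2$ matrix $M=(m_{ij})$ with $m_{ij}\in W:=H^0(\Oo_{\PP^1}(a_2-a_1))$, the composite $f\circ\iota_{[s:t]}$ is a section of $W^{\oplus 2}$ whose two components $g_1,g_2$ depend linearly on $(s,t)$, and the saturation of $f\circ\iota_{[s:t]}(\Oo_{\PP^1}(a_1))$ is isomorphic to $\Oo_{\PP^1}(a_2)$ if and only if $(g_1,g_2)\ne(0,0)$ and $g_1,g_2$ are proportional in the vector space $W$; equivalently there is $[\lambda:\mu]\in\PP^1$ with $\mu g_1-\lambda g_2=0$.

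Now observe that $M$ defines a bilinear map $\beta_M\colon\CC^2\times\CC^2\to W$, its values on the four pairs of basis vectors being the entries $m_{ij}$, and the existence of a ``bad'' subsheaf forces $\beta_M(v,w)=0$ for some $v,w\in\CC^2\setminus\{0\}$. Since a general $f$ is a general tuple $(m_{ij})\in W^{\oplus 4}$, i.e.\ a general bilinear map $\beta\colon\CC^2\times\CC^2\to W$, it suffices to show that for general $\beta$ one has $\beta(v,w)\ne 0$ whenever $v\ne 0\ne w$. For this I would use the incidence variety
$$I:=\{(\beta,[v],[w])\in W^{\oplus 4}\times\PP^1\times\PP^1\ :\ \beta(v,w)=0\}.$$
For fixed $([v],[w])$ the evaluation $\beta\mapsto\beta(v,w)$ is surjective onto $W$ (after changing bases so that $v=w=e_1$ it reads off a single matrix entry), so the fibre of the projection $I\to\PP^1\times\PP^1$ over each point is a linear subspace of $W^{\oplus 4}$ of codimension $\dim W=a_2-a_1+1$. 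Hence $\dim I=3\dim W+2=3(a_2-a_1)+5$, so the image of $I$ in $W^{\oplus 4}$ has dimension at most $3(a_2-a_1)+5$, which is strictly smaller than $\dim W^{\oplus 4}=4(a_2-a_1)+4$ precisely because $a_2-a_1\ge 2$. Thus the bad $f$ form a proper closed subset of $W^{\oplus 4}$, and a general $f$ admits no subsheaf of the required kind.

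The step requiring the most care is the dictionary in the second paragraph: verifying that every copy of $\Oo_{\PP^1}(a_1)$ in $\Ee_1$ is automatically a line subbundle (so the parameter space really is just $\PP^1$, with no extra moduli coming from non-saturated inclusions), together with the precise criterion ``the saturation of $f\circ\iota_{[s:t]}(\Oo_{\PP^1}(a_1))$ is $\Oo_{\PP^1}(a_2)$ $\iff$ the two components of $f\circ\iota_{[s:t]}$ are proportional and not both zero''. Once this is set up, everything else is the short dimension estimate above, and the hypothesis $a_2\ge a_1+2$ is used only there, through $3(a_2-a_1)+5<4(a_2-a_1)+4$.
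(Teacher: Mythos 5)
Your argument is correct, and it reaches the conclusion by a genuinely different mechanism than the paper. Both proofs share the opening dictionary---after twisting to $a_1=0$, the copies of $\Oo_{\PP^1}(a_1)$ in $\Ee_1$ are the constant line subbundles parametrised by $\PP^1$, and the saturation of the image is isomorphic to $\Oo_{\PP^1}(a_2)$ precisely when the two components of the image section are proportional and not both zero---but from there the paper proceeds by exhibiting a single explicit good $f$ (it encodes the relevant data in two forms $u=x_0^{a_2}+x_0x_1^{a_2-1}$ and $v=x_0x_1^{a_2-1}+x_1^{a_2}$ and checks by hand that no nonzero combination $au+bv$ is identically zero or a perfect power of a linear form), leaving implicit the fact that the bad locus is closed so that one good example suffices. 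You instead bound the dimension of the incidence variety $I\subset W^{\oplus 4}\times\PP^1\times\PP^1$ and observe that its image in $W^{\oplus 4}$---closed because $\PP^1\times\PP^1$ is proper---has codimension at least $(a_2-a_1)-1>0$. This costs a few extra lines of setup but buys two things: the closedness of the bad locus becomes explicit rather than tacit, and the role of the hypothesis $a_2\ge a_1+2$ is laid bare (for $a_2-a_1\le 1$ the count gives nothing, and a bihomogeneous B\'ezout computation shows that bad subsheaves then always occur, so the bound is sharp). The only point needing care, which you already flag, is that a nontrivial zero of $\beta_M$ with $g_1=g_2=0$ does not literally produce a bad subsheaf, since the saturation of the zero image is zero rather than $\Oo_{\PP^1}(a_2)$; as you only need the bad locus to be \emph{contained in} the image of $I$, this over-count is harmless. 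Your method also transports directly to the elliptic analogue (Lemma \ref{uu23}), where the paper instead reduces to $\PP^1$ via a degree-two cover.
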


\begin{proof}
Up to a twist we may assume that $a_1=0$. If we fix homogeneous coordinates $x_0,x_1$ on $\PP^1$, then the map $f$ is induced by two forms $u(x_0,x_1)$ and $v(x_0,x_1)$ of degree $a_2$. Then it is sufficient to prove that there is no point $(a,b)\in \CC^2\setminus \{(0,0)\}$ with which $au(x_0,x_1)+bv(x_0,x_1)$ is either identically zero or with a zero of multiplicity $a_2$. This is true for general $u(x_0,x_1)$ and $v(x_0,x_1)$, e.g. we may take $u(x_0,x_1) = x_0^{a_2}+x_0x_1^{a_2-1}$ and $v(x_0,x_1) = x_0x_1^{a_2-1} +x_1^{a_2}$.
\end{proof}

The next is an analogue of Lemma \ref{uu22} for elliptic curves. 

\begin{lemma}\label{uu23}
Let $X$ be an elliptic curve with two line bundles $\Ll_i$ for $i=1,2$ such that $\deg (\Ll_2)\ge \deg (\Ll_1)+4$. For a general map $f: \Ll_1^{\oplus 2} \rightarrow \Ll_2^{\oplus 2}$, there is no subsheaf $\Ll_1 \subset \Ll_1^{\oplus 2}$ such that the saturation of its image in $\Ll_2^{\oplus 2}$ is isomorphic to $\Ll_2$. 
\end{lemma}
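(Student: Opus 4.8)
The plan is to mimic the proof of Lemma \ref{uu22}, replacing the use of homogeneous coordinates on $\PP^1$ by the geometry of the linear system $|\Hh|$ with $\Hh := \Ll_2 \otimes \Ll_1^\vee$, a line bundle of degree $\deg(\Ll_2)-\deg(\Ll_1) \ge 4$ on the elliptic curve $X$. Tensoring by $\Ll_1^\vee$, we may assume $\Ll_1 \cong \Oo_X$, so that $f : \Oo_X^{\oplus 2} \to \Ll_2^{\oplus 2}$ is given by a pair $(u,v)$ of sections of $\Hh := \Ll_2$, and we must show that for general $(u,v)$ there is no $(a,b)\in \CC^2\setminus\{(0,0)\}$ such that the section $au+bv \in H^0(X,\Hh)$ vanishes on a divisor of the form $\deg(\Hh)\cdot q$ for a single point $q$ (the case $au+bv \equiv 0$ being automatically excluded for generic $(u,v)$ since $\dim H^0(X,\Hh) = \deg(\Hh) \ge 4 > 1$). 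Indeed, a subsheaf $\Oo_X \hookrightarrow \Oo_X^{\oplus 2}$ corresponds to a point $[a:b]\in \PP^1$, its composite with $f$ is $au+bv$, and the saturation of the image is $\Ll_2 \otimes \Oo_X(-Z)$ where $Z = \div(au+bv)$; this saturation is isomorphic to $\Ll_2$ precisely when $\Oo_X(Z) \cong \Oo_X$, i.e. $Z \sim 0$, i.e. $Z$ is a sum of $\deg(\Hh)$ points summing to zero in the group law — in particular, $au+bv$ having a single zero of multiplicity $\deg(\Hh)$ is one such bad case, but one should note the statement really wants to forbid \emph{all} $Z \sim 0$; I would phrase the target as: for general $(u,v)$, no nonzero combination $au+bv$ has divisor linearly equivalent to $0$ apart from those forced.

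To carry this out, first I would introduce $d := \deg(\Hh) = \deg(\Ll_2)-\deg(\Ll_1) \ge 4$ and consider the complete linear system $|\Hh| \cong \PP H^0(X,\Hh) = \PP^{d-1}$. Inside $|\Hh|$, the locus $\Sigma \subset \PP^{d-1}$ of effective divisors $D \in |\Hh|$ with $\Oo_X(D)\cong \Hh$ and $D \sim 0$—equivalently $D \in |\Hh|$ whose associated degree-$d$ point configuration sums to $0$ under the Abel–Jacobi map—is a subvariety: it is the image of the fiber over a single point of $X \to \mathrm{Pic}^d(X)$... but here $\Hh$ is fixed, so $|\Hh|$ already \emph{is} a fixed $\PP^{d-1}$ and the relevant bad locus is the set of $D\in|\Hh|$ whose support is a single point $q$ with $dq \sim \Hh$; this is finite (there are $d^2$ such $q$ by the theory of division points, or at most finitely many). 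More precisely I would argue: the map $\mathrm{Sym}^d X \to \mathrm{Pic}^d(X)$ has all fibers isomorphic to $\PP^{d-1}$; the diagonal-type locus $\{dq : q \in X\} \subset \mathrm{Sym}^d X$ is a curve, and its intersection with the single fiber $|\Hh|$ is the finite set of $q$ with $\Oo_X(dq)\cong\Hh$. Hence the "totally degenerate" divisors form a finite subset $F \subset |\Hh| = \PP^{d-1}$.

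The key step is then a dimension count: a pair $(u,v)$ gives a pencil, i.e. a line $\ell_{u,v} \subset \PP^{d-1} = |\Hh|$ (assuming $u,v$ linearly independent, which is generic), and the condition "some $au+bv$ lies in $F$" says $\ell_{u,v}\cap F \neq \emptyset$. The space of lines in $\PP^{d-1}$ meeting a fixed point has codimension $d-2$ in the Grassmannian $\mathrm{Gr}(2,d)$; since $F$ is finite, the lines meeting $F$ form a proper closed subset of $\mathrm{Gr}(2,d)$ as soon as $d - 2 \ge 1$, i.e. $d \ge 3$, which holds since $d \ge 4$. Therefore a general pencil $\ell_{u,v}$ misses $F$ entirely, which is exactly the assertion. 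For the explicit-example flavor matching Lemma \ref{uu22}, I would, after fixing an identification $X = \CC/\Lambda$ or an embedding, exhibit two sections $u,v$ of $\Ll_2$ whose pencil demonstrably avoids the finitely many bad points—e.g. choose $u$ vanishing at $d$ general points in general position and $v$ at $d$ other general points, so that no $au+bv$ is supported at one point. The main obstacle is bookkeeping the two genuinely different "bad" phenomena—$au+bv$ identically zero, and $au+bv$ with divisor $\sim 0$—and making sure the statement as written (which literally only mentions the saturation being $\cong \Ll_2$, i.e. divisor $\sim 0$, not necessarily a single point) is cleanly reduced to the finiteness of $F$ and the genericity of the pencil; once the $\ge 4$ hypothesis is seen to give $d-2\ge 2 > 0$ room in the Grassmannian, the rest is routine, and the parallel with Lemma \ref{uu22} (where $\PP^1$'s Veronese plays the role of our $F$, and $d-1 = a_2$) makes the structure transparent.
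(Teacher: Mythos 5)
There is a genuine gap at the very first step. A map $f:\Ll_1^{\oplus 2}\rightarrow \Ll_2^{\oplus 2}$ is given by a $2\times 2$ matrix $(m_{ij})$ of sections of $\Hh :=\Ll_2\otimes \Ll_1^{\vee}$, i.e.\ by four sections, not by a pair $(u,v)$; and the composite of $f$ with the subsheaf $\Ll_1\subset \Ll_1^{\oplus 2}$ determined by $[a:b]$ (note $\Hom (\Ll_1,\Ll_1^{\oplus 2})\cong \CC^2$) is the \emph{pair} of sections $s_1=am_{11}+bm_{12}$, $s_2=am_{21}+bm_{22}$, not a single section $au+bv$. This derails your identification of the bad locus. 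The saturation of the image of $(s_1,s_2):\Ll_1\rightarrow \Ll_2^{\oplus 2}$ is $\Ll_1(D)$ with $D=\gcd (\div (s_1),\div (s_2))$, and it is isomorphic to $\Ll_2$ exactly when $D=\div (s_1)=\div (s_2)$, i.e.\ when $s_1$ and $s_2$ are linearly dependent over $\CC$ (equivalently, $f$ sends the constant line $[a:b]$ into a constant line subbundle $\cong \Ll_2$ of $\Ll_2^{\oplus 2}$). It is not the condition ``$\div (au+bv)\sim 0$'' --- which is never satisfied by a nonzero section of a positive-degree line bundle, so in your framework the lemma is vacuous, a warning sign you half-notice but then sidestep by switching to the ``single zero of high multiplicity'' condition, which is also not the right one. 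Your finite set $F\subset |\Hh |$ and the count of lines in $\PP^{d-1}$ meeting $F$ therefore bound the wrong locus. A correct dimension count along your lines does exist: for fixed $[a:b]$ and $[\lambda :\mu ]$ the condition $\mu s_1=\lambda s_2$ imposes $d=\deg (\Hh )$ independent linear conditions on the $4d$-dimensional space of matrices, so the bad locus has codimension at least $d-2\ge 2$; but that is not the argument you wrote, and the fact that your argument never really uses the hypothesis $\deg (\Ll_2)\ge \deg (\Ll_1)+4$ should have been another red flag.

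For comparison, the paper's proof takes an entirely different and more constructive route, in which the hypothesis $\deg (\Ll_2)\ge \deg (\Ll_1)+4$ is essential: after reducing to $\Ll_1\cong \Oo _X$ and $l=\deg (\Ll_2)\ge 4$, it treats $l=4$ by writing $\Ll_2\cong \Mm ^{\otimes 2}$ with $\deg (\Mm )=2$ and pulling back a good map $h_1$ on $\PP^1$ (furnished by Lemma \ref{uu22}) along the degree-two morphism $\phi :X\rightarrow \PP^1$ defined by $|\Mm |$, and then reduces $l\ge 5$ to $l=4$ by composing a general map into $\Ll_2(-D)^{\oplus 2}$ with the twist by an effective divisor $D$ of degree $l-4$; since the property in question is open, exhibiting one good map suffices.
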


\begin{proof}
It is sufficient to find an injective map $h: \Ll_1^{\oplus 2} \rightarrow \Ll_2^{\oplus 2}$ for which no subsheaf $\Ll_1\subset \Ll_1^{\oplus 2}$ has its image under $h$ whose saturation in $\Ll_2^{\oplus 2}$ is isomorphic to $\Ll_2$. Up to a twist we may assume $\Ll _1\cong \Oo _X$ and so $l:=\deg (\Ll_2)\ge 4$. First assume $l=4$ and write $\Ll_2 \cong \Mm ^{\otimes 2}$ with $\deg (\Mm )=2$. If $\phi : X\rightarrow \PP^1$ be a morphism of degree two, induced by $|\Mm |$, then we may set $h:=\phi^\ast (h_1)$ for a general $h_1: \Oo _{\PP^1}^{\oplus 2} \rightarrow \Oo _{\PP^1}(2)^{\oplus 2}$ with Lemma \ref{uu22} applied to $h_1$. 

Now assume $l\ge 5$ and fix an effective divisor $D\subset X$ of degree $l-4$. Then we may take as $h$ the composition of a general map $\Oo_X^{\oplus 2} \rightarrow \Ll_2(-D)^{\oplus 2}$ with the map $\Ll_2(-D)^{\oplus 2} \rightarrow \Ll_2^{\oplus 2}$ obtained by twisting with $\Oo_X(D)$. 
\end{proof}

\begin{remark}\label{tre}
Let $\Dd$ be an arrangement with $T_X(-\log \Dd )\cong \Oo _X$ on $X$ with arbitrary dimension. For two line bundles $\Ll_1$ and $\Ll_2$ with $\Ll_2\otimes \Ll_1^\vee$ globally generated, set a triple $\Bb = ((\Ee _1,0),(\Ee _2,0),f)$ with $\Ee _i\cong \Ll_i ^{\oplus r}$ and $f$ injective. As in (\ref{fam}) we may generate other triples $\Bb _c$ for each $c\in \CC$, but often there are no other $\Dd$-logarithmic co-Higgs triples with $\Bb$ as the associated triple of vector bundles. For example, assume $X$ is a smooth projective curve of genus $g\in \{0,1\}$. For a fixed co-Higgs field $\Phi _1: \Ee _1\rightarrow \Ee _1$ with the associated $(r\times r)$-matrix $A_1$ of constants, we are looking for $f$ and $\Phi _2: \Ee _2\rightarrow \Ee _2$ with the associated matrix $A_2$ such that $\Aa = ((\Ee _1,\Phi _1),(\Ee _2,\Phi _2),f)$ is a $\Dd$-logarithmic co-Higgs triple. Let $M$ be the $(r\times r)$-matrix with coefficient in $H^0(\Ll_2 \otimes \Ll_1^\vee )$ associated to $f$. Then we need $A_2$ and $M$ such that $A_2M = MA_1$. Assume that $A_1$ has a unique Jordan block. If $\Ll_1 \cong \Ll_2$ and $M$ is general, then we get a $\Dd$-logarithmic co-Higgs triple if and only if $A_2$ is a polynomial in $A_1$. If $\Ll_1 \not\cong \Ll_2$ and $f$ is general, then there is no such $A_2$. We check this for the case $r=2$ and the general case can be shown similarly. With no loss of generality we may assume that the unique eigenvalue of $A_1$ is zero. Assume the existence of $f$ and $\Phi_2$ with associated $M$ and $A_2$. We have $\ker (\Phi_1)\cong \Ll_1$ and $f(\ker (\Phi_1))\subseteq \ker (\Phi_2)$. Thus we get that $f(\Ll_1)$ has $\ker (\Phi_2) \cong \Ll_2$ as its saturation, contradicting Lemmas \ref{uu22} and \ref{uu23} for a general $f$.
\end{remark}

\begin{remark}
In the same way as in \cite{ACG} one can define $\Dd$-logarithmic co-Higgs holomorphic chains with parameters, but if the maps are general, then very few logarithmic co-Higgs fields $\Phi _i$ are allowed.
\end{remark}

%%%%%%%%%%%%%%%%%%%%%

\providecommand{\bysame}{\leavevmode\hbox to3em{\hrulefill}\thinspace}
\providecommand{\MR}{\relax\ifhmode\unskip\space\fi MR }
% \MRhref is called by the amsart/book/proc definition of \MR.
\providecommand{\MRhref}[2]{%
  \href{http://www.ams.org/mathscinet-getitem?mr=#1}{#2}
}
\providecommand{\href}[2]{#2}

\end{document}